\newtheorem{lemma}{Lemma}
\newtheorem*{lemma*}{Lemma}
\newtheorem{theorem}{Theorem}
\newtheorem*{theorem*}{Theorem}
\theoremstyle{definition}
\newtheorem{example}{Example}
\newcommand{\dotminus}{\mathbin{\dot{-}}}
\newcommand{\dotoplus}{\mathbin{\dot{\oplus}}}
\newcommand{\circumcircle}[4]{
    \tkzDefCircle[circum](#1,#2,#3)
    \tkzGetPoint{#4}
    \tkzDrawCircle(#4,#1)
}
\DeclareMathOperator\mat{M}
\DeclareMathOperator\unit{U}
\DeclareMathOperator\eunit{EU}
\DeclareMathOperator\stunit{StU}
\DeclareMathOperator\glin{GL}
\DeclareMathOperator\stlin{St}
\DeclareMathOperator\ofalin{AL}
\DeclareMathOperator\ofasymp{ASp}
\DeclareMathOperator\ofaorth{AO}
\DeclareMathOperator\Ker{Ker}
\newcommand{\eps}{\varepsilon}
\newcommand{\leqt}{\trianglelefteq}
\newcommand{\inv}[1]{
    \!\;\overline{
        \!\!\:#1\vphantom !\!\!\:
    }\;\!
}
\DeclareMathOperator{\Aut}{Aut}
\newcommand{\up}[2]{{^{#1}\!{#2}}}
\title{
    Groups with \(\mathsf{BC}_\ell\)-commutator relations
}
\author{
      Egor Voronetsky\thanks{
          Research is supported by the Russian Science Foundation grant 22-21-00257.
      } \\
      Chebyshev Laboratory, \\
      St. Petersburg State University, \\
      14th Line V.O., 29B, \\
      Saint Petersburg 199178 Russia \\
}
\begin{document}
\maketitle

\begin{abstract}
Isotropic odd unitary groups generalize Chevalley groups of classical types over commutative rings and their twisted forms. Such groups have root subgroups parameterized by a root system
\(
    \mathsf{BC}_\ell
\) and may be constructed by so-called odd form rings with Peirce decompositions. We show the converse: if a group \(G\) has root subgroups indexed by roots of
\(
    \mathsf{BC}_\ell
\) and satisfying natural conditions, then there is a homomorphism
\(
    \stunit(R, \Delta) \to G
\) inducing isomorphisms on the root subgroups, where
\(
    \stunit(R, \Delta)
\) is the odd unitary Steinberg group constructed by an odd form ring \((R, \Delta)\) with a Peirce decomposition. For groups with root subgroups indexed by
\(
    \mathsf A_\ell
\) (the already known case) the resulting odd form ring is essentially a generalized matrix ring.
\end{abstract}


\section{Introduction}

Let \(K\) be a commutative unital ring and \(\Phi\) be a root system. By Chevalley group \(G(\Phi, K)\) we mean the point group of the simply connected split reductive group scheme over \(K\) of type \(\Phi\). It has so-called root subgroups
\(
    G_\alpha \leq G(\Phi, K)
\) for all
\(
    \alpha \in K
\) isomorphic to the additive group of \(K\) such that Chevalley commutator formula
\[
    [G_\alpha, G_\beta]
    \leq \prod_{ \substack{
        i \alpha + j \beta \in \Phi \\
        i, j \in \{ 1, 2, \ldots \}
    } } G_{i \alpha + j \beta}
\]
holds for non-opposite roots \(\alpha\) and \(\beta\). Moreover, there are Weyl elements from
\(
    G_\alpha G_{-\alpha} G_\alpha
\) permuting these root subgroups.

Conversely, suppose that \(G\) is arbitrary group with distinguished subgroups \(G_\alpha\) indexed by roots \(\alpha\) from a root system satisfying some natural conditions (in particular, they satisfy the Chevalley commutator formula and generate \(G\)). If also for every
\(
    1 \neq x \in G_\alpha
\) there is
\(
    1 \neq y \in G_{-\alpha}
\) such that \(xyx\) is a Weyl element, then \(G\) is almost a Chevalley group over a field \cite{root-group-datum, abs-root-subgr}.

If we only assume that there are some Weyl elements (as for groups over arbitrary commutative rings), then in the simply laced case \(G\) is a factor-group of Steinberg group
\(
    \stlin(\Phi, K)
\) over a ring \(K\) (possibly non-commutative for groups of type
\(
    \mathsf A_\ell
\)) \cite{graded-groups}.

It turns out that such structure result holds over more general class of groups without Weyl elements. For example, if \(R\) is an associative unital ring with a complete family of orthogonal idempotents \(e_1\), \ldots, \(e_n\), then \(\glin(R)\) has root subgroups isomorphic to \(e_i R e_j\) for \(i \neq j\) satisfying Chevalley commutator formula. In \cite{a-com-rel} we proved that if \(G\) is an abstract group with root subgroups
\(
    G_\alpha \leq G
\) indexed by roots of the root system of type
\(
    \mathsf A_\ell
\) for
\(
    \ell \geq 3
\) satisfying some conditions, then there is a homomorphism from the Steinberg group
\(
    \stlin(R)
\) to \(G\) inducing isomorphisms on the root subgroups, where \(R\) is an associative but possibly non-unital ring with a Peirce decomposition of rank \(\ell + 1\).

There exist several constructions \cite{bak, calmes-fasel, knus, inv-book, odd-def-petrov, def-tits, def-weil} generalizing Chevalley groups of classical types
\(
    \mathsf A_\ell
\),
\(
    \mathsf B_\ell
\),
\(
    \mathsf C_\ell
\), and
\(
    \mathsf D_\ell
\). Unitary groups of A. Bak \cite{bak} constructed by modules over form rings with hermitian and quadratic forms generalize Chevalley groups of types
\(
    \mathsf A_\ell
\),
\(
    \mathsf C_\ell
\),
\(
    \mathsf D_\ell
\), and some of their twisted forms. A more general construction of V. Petrov \cite{odd-def-petrov} uses odd form parameters on modules to get all classical Chevalley groups. Twisted forms of classical Chevalley groups also may be constructed using Azumaya algebras with involutions \cite{inv-book}, but this construction does not gives e.g. matrix linear groups over non-commutative rings.

In \cite{thesis, classic-ofa} we further generalized V. Petrov's construction using odd form rings. These objects form a two-sort variety in the sense of universal algebra (unlike odd form parameters on modules). As in other constructions, the unitary groups constructed by odd form rings with additional structure (a Peirce decomposition \cite{thesis}) have natural root subgroups indexed by a root system of type
\(
    \mathsf{BC}_\ell
\). In the classical cases the redundant root subgroups are trivial.

In this paper we prove
\begin{theorem*}
    Let \(G\) be a group with root subgroups indexed by a root system of type
    \(
        \mathsf{BC}_\ell
    \) for
    \(
        \ell \geq 4
    \) and satisfying natural conditions. Then \(G\) is a factor-group of the unitary Steinberg group
    \(
        \stunit(R, \Delta)
    \) constructed by an odd form ring with a Peirce decomposition. Moreover, the root subgroups of the Steinberg group map onto such subgroups of \(G\) with kernels central in
    \(
        \stunit(R, \Delta)
    \).
\end{theorem*}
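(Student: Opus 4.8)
The plan is to recover, from the family \(\{G_\alpha\}\) and its Chevalley commutator relations, an odd form ring \((R,\Delta)\) with a Peirce decomposition, and then to identify \(G\) with a factor-group of the unitary Steinberg group \(\stunit(R,\Delta)\) attached to it, following the pattern of the \(\mathsf A_\ell\)-case of \cite{a-com-rel} inside the framework of \cite{graded-groups}. The system \(\mathsf{BC}_\ell\) contains the copy of \(\mathsf A_{\ell-1}\) spanned by the differences \(e_i-e_j\), so restricting attention to the root subgroups \(G_{e_i-e_j}\) and quoting \cite{a-com-rel} --- whose rank hypothesis \(\ell\ge3\) here becomes \(\ell-1\ge3\), i.e.\ \(\ell\ge4\) --- produces an associative, in general non-unital, ring \(R_0=\bigoplus_{i,j}e_iR_0e_j\) with parametrizations \(\theta_{e_i-e_j}\colon e_iR_0e_j\to G_{e_i-e_j}\) under which the multiplication of \(R_0\) is read off from the commutator relations \([G_{e_i-e_j},G_{e_j-e_k}]=G_{e_i-e_k}\). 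The remaining roots carry the extra structure of an odd form ring: the mixed roots \(\pm e_i\pm e_j\), the long roots \(\pm2e_i\) and the negative roots encode an involution \(r\mapsto\inv r\) and a hyperbolic pairing enlarging \(R_0\) to the ring \(R\) (with its involution) of rank \(\ell\) underlying the odd form ring, while the short root subgroups \(G_{e_i}\) --- non-abelian in general, with \([G_{e_i},G_{e_i}]\le G_{2e_i}\), hence of Heisenberg shape --- are assembled into the odd form parameter \(\Delta\) with its operation \(\dotoplus\) and its \(R\)-module-type structure maps.

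\textbf{Reading off the structure and verifying the axioms.}
After fixing parametrizations \(\theta_\alpha\) of all root subgroups (using additivity of \(G_\alpha\) where it is abelian, and the central extension of \(G_{e_i}\) over \(G_{2e_i}\) for short roots), I would read off the structure constants of \((R,\Delta)\) one Chevalley formula at a time: \([\theta_{e_i-e_j}(a),\theta_{e_j-e_k}(b)]=\theta_{e_i-e_k}(ab)\) for the multiplication of \(R\); the commutators of \(G_{e_i+e_j}\) with \(G_{-e_j}\), of \(G_{2e_i}\) with \(G_{-e_i}\), and the square map \(u\mapsto\theta_{e_i}(u)^2\in G_{2e_i}\) for the involution and the hyperbolic/quadratic data; and \([\theta_{e_i}(u),\theta_{e_j-e_i}(a)]\in G_{e_j}G_{e_i+e_j}G_{2e_j}\) together with \([\theta_{e_i}(u),\theta_{-e_j}(b)]\in G_{e_i-e_j}\) for the action of \(R\) on \(\Delta\). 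The core task is then to check that \((R,\Delta)\) satisfies every axiom of an odd form ring with a Peirce decomposition in the sense of \cite{thesis, classic-ofa}: each axiom is a polynomial identity among the structure maps, deduced from a corresponding identity in \(G\) --- the Chevalley formula applied to a triple of roots, the Hall--Witt identity, and additivity within a root subgroup --- specialized to pairwise distinct indices \(i,j,k,\dots\), and \(\ell\ge4\) is exactly what makes enough distinct indices available for every identity required.

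\textbf{The homomorphism and the kernel on root subgroups.}
The homomorphism \(\stunit(R,\Delta)\to G\) is then immediate from the presentation of the unitary Steinberg group, which is generated by symbols for the elements of its root subgroups modulo additivity and the Chevalley commutator relations: by the previous step the elements \(\theta_\alpha(\cdot)\in G\) satisfy exactly those relations, so the assignment of generators extends to a homomorphism, and it is surjective because the \(G_\alpha\) generate \(G\). As \(\stunit(R,\Delta)_\alpha\) is the image of the elementary root elements of type \(\alpha\) and these are sent onto \(G_\alpha\), the induced map \(\stunit(R,\Delta)_\alpha\to G_\alpha\) is surjective; that its kernel is central in \(\stunit(R,\Delta)\) then follows by the standard recognition argument of \cite{graded-groups,a-com-rel}: if \(z\in\stunit(R,\Delta)_\alpha\) maps to \(1\), then for \(\beta\) not proportional to \(\alpha\) the Chevalley relations place \([z,\stunit(R,\Delta)_\beta]\) in a product of root subgroups avoiding \(\alpha\) and \(\beta\), and an induction on root height --- using the Weyl elements available under the hypotheses both to reduce to positive roots and to dispose of the cases \(\beta=\pm\alpha\) --- shows that \(z\) commutes with every \(\stunit(R,\Delta)_\beta\), hence with all of \(\stunit(R,\Delta)\).

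\textbf{The main obstacle.}
The genuine obstacle is the axiom check for \(\Delta\). Whereas \(R\) is recovered by essentially additive bookkeeping, \(\Delta\) is intrinsically non-linear: its operation \(\dotoplus\), its trace and involution-compatibility maps, its \(R\)-action, and the Heisenberg commutator \(G_{e_i}\times G_{e_i}\to G_{2e_i}\) all behave quadratically, so recovering it cleanly from the short root subgroups and establishing the long list of compatibility axioms demands extensive and delicate manipulation of nested commutators. The combinatorics of \(\mathsf{BC}_\ell\), with short, long and mixed roots coexisting and many ways for a combination \(i\alpha+j\beta\) to be a root, makes this bookkeeping considerably heavier than in the \(\mathsf A_\ell\)-case of \cite{a-com-rel}, and organizing it so that the rank bound \(\ell\ge4\) genuinely suffices is where the bulk of the work lies.
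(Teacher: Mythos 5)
Your outline captures the coarse shape of the argument (read structure constants off the commutator formulas, check identities, use the Steinberg presentation to get a surjection), but two of its load-bearing steps do not survive contact with the actual hypotheses. First, your centrality argument for the kernel relies on ``the Weyl elements available under the hypotheses'' to reduce to positive roots and to handle \(\beta=\pm\alpha\). No Weyl elements are available: the conditions \ref{comm-rel}--\ref{short-gen} are deliberately weaker than a root datum with Weyl elements, and avoiding them is the whole point of the paper. The paper obtains centrality in a completely different way: the data read off from \(G\) form only a \emph{partial} graded odd form ring, one passes to its universal central extension \((\widetilde R_{ij},\widetilde\Delta^0_i)_{ij}\) (lemma \ref{ucext}), and the kernel of that extension consists of elements that are central in the odd-form-ring sense (they annihilate everything under all structure maps); the corresponding root elements \(X_{ij}(I_{ij})\), \(X_i(\Gamma^0_i)\) are then central in \(\stunit(R,\Delta)\) by the Steinberg relations together with the idempotency conditions, with injectivity on root subgroups of the quotient controlled by \cite[lemma 11]{thesis} --- no induction on root height and no Weyl elements anywhere.

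Second, your plan treats the construction of \((R,\Delta)\) with its Peirce decomposition as a matter of fixing parametrizations and verifying axioms, but the root subgroups only give the components \(R_{ij}\) and \(\Delta^0_i\) with \emph{nonzero} indices of distinct absolute value. The Peirce decomposition of rank \(\ell\) also requires \(R_{0i}\), \(R_{i0}\), \(R_{\pm i,\pm i}\), \(R_{00}\) and \(\Delta^0_0\), and building these by generators and relations (lemmas \ref{pres-ring}, \ref{ring-struct}, \ref{pres-form}, \ref{form-struct}, \ref{peirce-ofr}) is the bulk of the proof; it is exactly here that the associativity conditions \ref{ass-law-l}--\ref{ass-law-bal} are needed, and the role of \(\ell\ge 4\) is that these conditions are then automatic (theorem \ref{ass-forced}), while for \(\ell=3\) the octonionic examples show the construction genuinely fails. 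Moreover, without the \(K\)-action of \ref{k-act}--\ref{homo-comm} the group-side data need not even admit the operation \(u\mapsto u\cdot(-1)\) on \(\Delta^0_i\) (the paper exhibits a quotient of a symplectic Steinberg group where it does not exist), so the homomorphism \(\stunit(R,\Delta)\to G\) is not ``immediate from the presentation'': one must first pass to the universal central extension, which does carry a \(\mathbb Z\)-algebra structure (lemmas \ref{idem-cond}, \ref{ucext}), and only then apply theorem \ref{coord-cryst}. Quoting the \(\mathsf A_{\ell-1}\) result for the subsystem spanned by the \(\mathrm e_i-\mathrm e_j\) is a reasonable shortcut for the multiplicative part of \(R\), but it does not touch either of these two issues, which is where the theorem is actually proved.
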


Conversely, if \((R, \Delta)\) is a sufficiently good odd form ring (in particular, a locally finite odd form \(K\)-algebra) and
\(
    N \leqt \stunit(R, \Delta)
\) trivially intersects the root subgroups, then \(N\) is central by the sandwich classification of normal subgroups in
\(
    \eunit(R, \Delta)
\) (see e.g. \cite{preusser-odd}) and the centrality of \(\mathrm K_2\)-functor (\cite{central-ku2} or \cite[theorem 7]{thesis}). The sandwich classification theorem is known only for unital \((R, \Delta)\) and there are no such \(N\) non-trivially intersecting the root subgroups such that the intersections are central in
\(
    \stunit(R, \Delta)
\).

Informally, the natural conditions from the theorem are the following: \(G\) is generated by the root subgroups, the Chevalley commutator formula holds, and each root subgroup may be expressed in terms of commutators of other root subgroups. If some commutative unital ring \(K\) acts on the root subgroups in a natural way, then the resulting odd form ring \((R, \Delta)\) is an augmented odd form \(K\)-algebra and in the last claim of the theorem the maps of the root subgroups are isomorphisms.

The Chevalley commutator formula is
\[
    [G_\alpha, G_\beta]
    \leq \prod_{ \substack{
        i \alpha + j \beta \in \Phi \\
        i, j \in \{ 1, 2, \ldots \}
    } } G_{i \alpha + j \beta},
\]
we also assume that
\(
    G_{2 \alpha} \leq G_\alpha
\) are \(2\)-step nilpotent filtrations for any ultrashort root \(\alpha\). In other words, \(G\) is a group with
\(
    \mathsf{BC}_\ell
\)-commutator relations in the sense of \cite{st-jordan}. Alternatively, we may assume only that \(G\) contains groups indexed by a root system of type
\(
    \mathsf B_\ell
\) and
\[
    [G_\alpha, G_\beta]
    \leq \prod_{ \substack{
        i \alpha + j \beta \in \Phi \\
        i, j \in \mathbb R_+
    } } G_{i \alpha + j \beta}.
\]
These formulas turn out to be equivalent (modulo other natural conditions) up to a choice of the nilpotent filtrations
\(
    G_{2 \alpha} \leq G_\alpha
\).

The paper is organized as follows. In section \(2\) we recall the definitions of odd form rings and associated unitary groups. In sections \(3\) and \(4\) we list the precise conditions on \(G\) and its subgroups. Namely, the conditions are \ref{comm-rel}--\ref{short-gen} without using a commutative unital ring \(K\) or \ref{comm-rel}--\ref{homo-comm} involving \(K\). In section \(4\) we also discuss the case \(\ell = 3\): under additional ``associativity conditions'' \ref{ass-law-l}--\ref{ass-law-bal} the main results still hold, otherwise there are counterexamples (e.g. Chevalley groups of types
\(
    \mathsf E_6
\) and
\(
    \mathsf E_7
\)). These associativity conditions always hold for
\(
    \ell \geq 4
\) by theorem \ref{ass-forced}. Sections \(5\) and \(6\) contain the proof of the main theorem \ref{coord-cryst} for groups satisfying \ref{comm-rel}--\ref{homo-comm}. In the last section \(7\) we prove theorem \ref{coord-sphere} for groups satisfying only \ref{comm-rel}--\ref{short-gen}.

\section{Odd form groups and odd form rings}

In this paper we build a lot of \(2\)-step nilpotent groups with various operations, so it is useful to develop some technique to simplify such constructions. The group operation of \(2\)-step nilpotent groups is usually denoted by \(\dotplus\). All lemmas in this section may be checked directly or using the machinery of polyquadratic maps \cite[\S 1.3]{thesis}.

We say that \((M, H)\) is a \textit{hermitian group} if \(M\) and \(H\) are abelian groups, there is an automorphism
\(
    \inv{(-)} \colon H \to H
\) of order at most \(2\), and there is a biadditive pairing
\(
    \langle {-}, {=} \rangle
    \colon M \times M
    \to H
\) such that \(
    \inv{\langle m, m' \rangle}
    = \langle m', m \rangle
\) for all \(m, m' \in M\).

Let \((M_1, H_1)\) and \((M_2, H_2)\) be hermitian groups, \(N\) be an abelian group. A \textit{sesquiquadratic map}
\(
    (\alpha, \beta)
    \colon (M_1, H_1) \times N
    \to (M_2, H_2)
\) consists of biadditive
\(
    \alpha \colon M_1 \times N \to M_2
\) and triadditive
\(
    \beta \colon N \times H_1 \times N \to H_2
\) such that
\begin{align*}
    \inv{\beta(n, h, n')}
    &= \beta(n', \inv h, n),
    \\
    \langle \alpha(m, n), \alpha(m', n') \rangle
    &= \beta(n, \langle m, m' \rangle, n').
\end{align*}
Sesquiquadratic maps
\(
    (\alpha_{ij}, \beta_{ij})
    \colon (M_i, H_i) \times N_{ij}
    \to (M_j, H_j)
\) for
\(
    1 \leq i < j \leq 3
\) together with biadditive
\(
    \mu \colon N_{12} \times N_{23} \to N_{13}
\) satisfy the \textit{associative law} if
\begin{align*}
    \alpha_{23}(\alpha_{12}(m, n_{12}), n_{23})
    &= \alpha_{13}(m, \mu(n_{12}, n_{23})),
    \\
    \beta_{23}(
        n_{23},
        \beta_{12}(n_{12}, h, n_{12}'),
        n_{23}'
    )
    &= \beta_{13}(
        \mu(n_{12}, n_{23}),
        h,
        \mu(n_{12}', n_{23}')
    ).
\end{align*}

A \textit{odd form group}
\(
    (M, H, \Delta)
\) consists of a hermitian group \((M, H)\), a group \(\Delta\) with the group operation \(\dotplus\), and maps
\(
    \phi \colon H \to \Delta
\),
\(
    \pi \colon \Delta \to M
\),
\(
    \rho \colon \Delta \to H
\) such that
\begin{align*}
    \phi(h + \inv h) &= \dot 0;
    &
    \phi(h + h') &= \phi(h) \dotplus \phi(h');
    &
    u \dotplus v
    &=
    v \dotplus u
    \dotminus \phi(\langle \pi(u), \pi(v) \rangle);
    \\
    \phi(\langle m, m \rangle) &= \dot 0;
    &
    \pi(u \dotplus v) &= \pi(u) + \pi(v);
    &
    \rho(u \dotplus v)
    &= \rho(u)
    - \langle \pi(u), \pi(v) \rangle
    + \rho(v);
    \\
    \pi(\phi(h)) &= 0;
    &
    \rho(\phi(h)) &= h - \inv h;
    &
    0
    &= \rho(u)
    + \langle \pi(u), \pi(u) \rangle
    + \inv{\rho(u)}.
\end{align*}

The group \(\Delta\) is called an \textit{odd form parameter} (see also \cite[\S2]{odd-def-petrov}), it is \(2\)-step nilpotent with the nilpotent filtration
\(
    \phi(H) \leq \Delta
\). Also,
\(
    \rho(\dot 0) = 0
\) and
\(
    \rho(\dotminus u) = \inv{\rho(u)}
\). The condition
\(
    \phi(\langle m, m \rangle) = \dot 0
\) is unnecessary if
\(
    \pi(\Delta) = M
\), this is usually the case in this paper. In the next lemma we construct free odd form parameters.

\begin{lemma} \label{ofg-free}
    Let \((M, H)\) be a hermitian group with elements \(m_i \in M\), \(h_i \in H\) for \(i \in I\) such that
    \(
        h_i + \langle m_i, m_i \rangle + \inv{h_i} = 0
    \). Then there is the universal odd form group
    \(
        (M, H, \Delta)
    \) with elements
    \(
        u_i \in \Delta
    \) such that
    \(
        \pi(u_i) = m_i
    \) and
    \(
        \rho(u_i) = h_i
    \). Namely,
    \(
        \Delta
        = \phi(H)
        \dotoplus \bigoplus^\cdot_{i \in I}
            \mathbb Z^\cdot u_i
    \), it is the set of formal expressions
    \(
        \phi(h)
        \dotplus \sum_{i \in I}^\cdot
            k_i^\cdot u_i
    \), where almost all
    \(
        k_i \in \mathbb Z
    \) are zero. The kernel of \(\phi\) is generated by the elements
    \(
        h + \inv h
    \) and
    \(
        \langle m, m \rangle
    \).
\end{lemma}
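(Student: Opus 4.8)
The plan is to build \(\Delta\) explicitly as a set of normal forms, equip it with the maps \(\phi\), \(\pi\), \(\rho\) dictated by the odd form group axioms, and then read off the universal property. First I would fix a total order on the index set \(I\) (well-ordering it if \(I\) is infinite) and let \(H_0 \leq H\) be the subgroup generated by all elements \(h + \inv h\) and all elements \(\langle m, m \rangle\); put \(\bar H = H / H_0\). Then \(H_0\) is stable under \(\inv{(-)}\), which therefore descends to \(\bar H\), where it becomes \(\bar h \mapsto -\bar h\) and the pairing becomes alternating, while the map \(h \mapsto h - \inv h\) factors through a well-defined homomorphism \(\delta \colon \bar H \to H\). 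As the underlying set I take \(\Delta = \bar H \times \mathbb{Z}^{(I)}\), writing a typical element as \((\bar h; k)\) and thinking of it as \(\phi(h) \dotplus \sum_i^\cdot k_i^\cdot u_i\) with the product formed in increasing order of \(i\), where \(u_i := (0; e_i)\) and \(\phi(h) := (\bar h; 0)\).

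Next I would define the operation \(\dotplus\) by \((\bar h_1; k) \dotplus (\bar h_2; l) = (\bar h_1 + \bar h_2 + c(k, l);\, k + l)\), where \(c(k, l) \in \bar H\) is the correction coming from sorting the concatenated product back into increasing order by repeated use of \(u_i \dotplus u_j = u_j \dotplus u_i \dotminus \phi(\langle m_i, m_j \rangle)\); concretely \(c(k, l)\) is minus the class of \(\sum_{i > j} k_i l_j \langle m_i, m_j \rangle\). Since \(c\) is a normalized (indeed bilinear, triangular) \(2\)-cocycle \(\mathbb{Z}^{(I)} \times \mathbb{Z}^{(I)} \to \bar H\), this makes \(\Delta\) a group, a central extension of the free abelian group \(\mathbb{Z}^{(I)}\) by \(\bar H\). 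I then set \(\pi(\bar h; k) = \sum_i k_i m_i\) and define \(\rho\) by the formula forced by \(\rho(u \dotplus v) = \rho(u) - \langle \pi(u), \pi(v) \rangle + \rho(v)\) together with \(\rho(u_i) = h_i\) and \(\rho(\phi(h)) = h - \inv h\), namely
\[
    \rho(\bar h; k)
    = \delta(\bar h)
    + \sum_i \bigl( k_i h_i - \tbinom{k_i}{2} \langle m_i, m_i \rangle \bigr)
    - \sum_{i < j} k_i k_j \langle m_i, m_j \rangle \in H
\]
(where \(\tbinom{k_i}{2} = k_i(k_i-1)/2\), an integer also for \(k_i < 0\)). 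The substance of this step is to verify that these maps are well defined and that \((M, H, \Delta)\) satisfies all the defining identities of an odd form group: the identities involving only \(\phi\) and \(\pi\) are immediate; \(u \dotplus v = v \dotplus u \dotminus \phi(\langle \pi(u), \pi(v) \rangle)\) reduces to \(c(k, l) - c(l, k) \equiv -\langle \pi(u), \pi(v) \rangle\) in \(\bar H\), which holds because the pairing is alternating modulo \(H_0\); and the two \(\rho\)-identities unwind to a bilinearity computation in which the hypothesis \(h_i + \langle m_i, m_i \rangle + \inv{h_i} = 0\) is exactly what closes the last identity \(0 = \rho(u) + \langle \pi(u), \pi(u) \rangle + \inv{\rho(u)}\).

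Finally, for universality: given any odd form group \((M, H, \Delta')\) with the same \((M, H)\) and elements \(u_i' \in \Delta'\) satisfying \(\pi(u_i') = m_i\) and \(\rho(u_i') = h_i\), the assignment \((\bar h; k) \mapsto \phi'(h) \dotplus \sum_i^\cdot k_i^\cdot u_i'\) is well defined because \(\phi'(h + \inv h) = \phi'(\langle m, m \rangle) = \dot 0\), is a homomorphism of odd form groups because \(\dotplus\), \(\pi\), \(\rho\) on \(\Delta\) were defined using only relations valid in every odd form group, and is the unique such map since \(\phi(H)\) and the \(u_i\) generate \(\Delta\). In particular the composite \(H \twoheadrightarrow \bar H \hookrightarrow \Delta\) equals \(\phi\), so \(\ker(\phi \colon H \to \Delta) = H_0\), which is the last assertion of the lemma. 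The step I expect to be the main obstacle is the bookkeeping in the middle paragraph: pinning down the cocycle \(c\) and the formula for \(\rho\) with the correct signs and then grinding through the full list of odd form group identities. This is exactly the kind of verification the polyquadratic-maps formalism of \cite[\S1.3]{thesis} is built to handle, since \(k \mapsto (\pi(\bar h; k), \rho(\bar h; k))\) is polyquadratic, and I would use that machinery to compress the argument. A minor point worth a remark is that the construction depends on the chosen order on \(I\), but any two choices yield canonically isomorphic odd form groups by the universal property.
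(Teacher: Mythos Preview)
Your construction is correct and is precisely the ``check directly'' route the paper alludes to without carrying out: the paper gives no proof here, only the blanket remark that all lemmas in this section may be checked directly or via the polyquadratic-maps machinery of \cite[\S 1.3]{thesis}. Your explicit cocycle description of \(\Delta\) as a central extension of \(\mathbb Z^{(I)}\) by \(\bar H\), the closed formula for \(\rho\), and the verification of the universal property are exactly what that direct check amounts to, and you have also correctly identified the polyquadratic shortcut as the alternative.
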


We say that elements
\(
    u_i \in \Delta
\), \(i \in I\) generated the odd form parameter of an odd form group
\(
    (M, H, \Delta)
\) if they generate the abelian group
\(
    \Delta / \phi(H)
\).

\begin{lemma} \label{ofg-fact}
    Let
    \(
        (M, H, \Delta)
    \) be an odd form group and
    \(
        X \leq \Delta
    \) be the subgroup generated by
    \(
        u_i \dotminus v_i
    \) for \(i \in I\). Suppose that
    \(
        \pi(u_i) = \pi(v_i)
    \) and
    \(
        \rho(u_i) = \rho(v_i)
    \). Then
    \(
        X \leqt \Delta
    \) and
    \(
        (M, H, \Delta / X)
    \) is a well-defined odd form group.
\end{lemma}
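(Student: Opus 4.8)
The plan is to first check, directly from the axioms, the two numerical conditions $\pi(u_i \dotminus v_i)=0$ and $\rho(u_i \dotminus v_i)=0$ for the generators of $X$, then to conclude from the first of these that $X$ is central (hence normal), and finally to push the structure maps $\phi,\pi,\rho$ to the quotient. For $\pi$ the first condition is immediate, since $\pi$ is additive: $\pi(u_i \dotminus v_i)=\pi(u_i)-\pi(v_i)=0$. For $\rho$, using the identity $\rho(u \dotplus v)=\rho(u)-\langle\pi(u),\pi(v)\rangle+\rho(v)$ together with $\pi(\dotminus v_i)=-\pi(v_i)$ and $\rho(\dotminus v_i)=\inv{\rho(v_i)}$, I get
\[
    \rho(u_i \dotminus v_i)
    = \rho(u_i) + \langle \pi(u_i), \pi(v_i) \rangle + \inv{\rho(v_i)}
    = \rho(v_i) + \langle \pi(v_i), \pi(v_i) \rangle + \inv{\rho(v_i)}
    = 0,
\]
where the first two hypotheses are used to pass to the middle expression and the last axiom $0 = \rho(w) + \langle\pi(w),\pi(w)\rangle + \inv{\rho(w)}$ (applied to $w = v_i$) gives the final equality.

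Next I would observe that $X$ is in fact central in $\Delta$. Writing $g_i := u_i \dotminus v_i$, each $g_i$ has $\pi(g_i)=0$, so the commutation identity $u \dotplus v = v \dotplus u \dotminus \phi(\langle\pi(u),\pi(v)\rangle)$ yields $g_i \dotplus w = w \dotplus g_i$ for every $w \in \Delta$; since the $g_i$ generate $X$ and the centre of $\Delta$ is a subgroup, $X$ is central, in particular $X \leqt \Delta$. Moreover $\pi(X)=0$ because $\pi$ is a homomorphism vanishing on all $g_i$; and $\rho(X)=0$ because, $X$ being central, every $x \in X$ is a $\dotplus$-product of elements among the $g_i$ and $\dotminus g_i$, all of which have $\pi=0$ and $\rho=0$, so iterating the formula for $\rho$ of a $\dotplus$-sum only produces the vanishing contributions $\sum\rho(\cdot)$ and $-\sum\langle\pi(\cdot),\pi(\cdot)\rangle$.

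Finally, since $X$ is central the quotient group $\Delta/X$ is well defined; the composite $\bar\phi\colon H \xrightarrow{\phi}\Delta \to \Delta/X$ is a homomorphism, and because $\pi(x)=\rho(x)=0$ for $x\in X$ we have $\pi(u \dotplus x)=\pi(u)$ and $\rho(u \dotplus x)=\rho(u)-\langle\pi(u),\pi(x)\rangle+\rho(x)=\rho(u)$, so $\pi$ and $\rho$ descend to well-defined maps $\bar\pi\colon\Delta/X\to M$ and $\bar\rho\colon\Delta/X\to H$. Each defining identity of an odd form group for $(M,H,\Delta/X)$ — including the optional identity $\phi(\langle m,m\rangle)=\dot 0$ when it is assumed, and the derived facts $\rho(\dot 0)=0$, $\rho(\dotminus u)=\inv{\rho(u)}$ — is then obtained by applying the quotient homomorphism to the corresponding statement in $(M,H,\Delta)$, while the hermitian group $(M,H)$ is left untouched. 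The only mildly delicate point, and the place where I expect the real content to sit, is the well-definedness of $\bar\rho$: since $\rho$ is not a homomorphism one genuinely needs both hypotheses, $\pi(u_i)=\pi(v_i)$ to kill the cross term $\langle\pi(u),\pi(x)\rangle$ and $\rho(u_i)=\rho(v_i)$ (via the last axiom) to kill $\rho(x)$; everything else is routine.
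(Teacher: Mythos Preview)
Your proof is correct and is precisely the kind of direct verification the paper has in mind: the paper does not spell out a proof of this lemma at all, merely remarking that ``all lemmas in this section may be checked directly or using the machinery of polyquadratic maps.'' Your argument --- showing $\pi(u_i \dotminus v_i)=0$ and $\rho(u_i \dotminus v_i)=0$ from the axioms, deducing centrality of $X$ from the commutation identity, and then descending $\pi,\rho$ to the quotient --- is exactly that direct check.
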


Let
\(
    (M_1, H_1, \Delta_1)
\) and
\(
    (M_2, H_2, \Delta_2)
\) be odd form groups, \(N\) be an abelian group. A \textit{sesquiquadratic map}
\[
    (\alpha, \beta, \gamma)
    \colon (M_1, H_1, \Delta_1) \times N
    \to (M_2, H_2, \Delta_2)
\]
consists of a sesquiquadratic map
\(
    (\alpha, \beta)
    \colon (M_1, H_1) \times N
    \to (M_2, H_2)
\) and a map
\(
    \gamma \colon \Delta_1 \times N \to \Delta_2
\) such that
\begin{align*}
    \pi(\gamma(u, n)) &= \alpha(\pi(u), n);
    &
    \gamma(\phi(h), n) &= \phi(\beta(n, h, n));
    \\
    \gamma(u \dotplus u', n)
    &= \gamma(u, n) \dotplus \gamma(u', n);
    &
    \rho(\gamma(u, n)) &= \beta(n, \rho(u), n);
    \\
    \gamma(u, n + n')
    &= \gamma(u, n)
    \dotplus \phi(\beta(n', \rho(u), n))
    \dotplus \gamma(u, n').\mkern-200mu
\end{align*}
Sesquiquadratic maps
\(
    (\alpha_{ij}, \beta_{ij}, \gamma_{ij})
    \colon (M_i, H_i, \Delta_i) \times N_{ij}
    \to (M_j, H_j, \Delta_j)
\) for
\(
    1 \leq i < j \leq 3
\) together with biadditive
\(
    \mu \colon N_{12} \times N_{23} \to N_{13}
\) satisfy the \textit{associative law} if
\(
    (\alpha_{ij}, \beta_{ij})
\) together with \(\mu\) satisfy the associative law (in the sense of hermitian groups) and
\[
    \gamma_{23}(\gamma_{12}(u, n_{12}), n_{23})
    = \gamma_{13}(u, \mu(n_{12}, n_{23})).
\]

\begin{lemma} \label{ofg-sq-free}
    Let
    \(
        (M_1, H_1, \Delta_1)
    \) and
    \(
        (M_2, H_2, \Delta_2)
    \) be odd form groups such that the odd form parameter \(\Delta_1\) is free with generators \(u_i\), \(i \in I\). Let also
    \(
        N = \bigoplus_{j \in J} \mathbb Z n_j
    \) be a free abelian group and
    \(
        (\alpha, \beta)
        \colon (M_1, H_1) \times N
        \to (M_2, H_2)
    \) be a sesquiquadratic map. Then for every choice of elements
    \(
        v_{ij} \in \Delta_2
    \) such that
    \(
        \pi(v_{ij}) = \alpha(\pi(u_i), n_j)
    \) and
    \(
        \rho(v_{ij}) = \beta(n_j, \rho(u_i), n_j)
    \) there is unique map
    \(
        \gamma \colon \Delta_1 \times N \to \Delta_2
    \) such that
    \(
        (\alpha, \beta, \gamma)
    \) is sesquiquadratic and
    \(
        \gamma(u_i, n_j) = v_{ij}
    \).
\end{lemma}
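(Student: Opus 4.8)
The plan is to work throughout with the explicit description of \(\Delta_1\) from lemma \ref{ofg-free}: as a group, \(\Delta_1\) is generated by \(\phi(H_1)\) and the \(u_i\), where \(\phi(H_1)\) is central (the axiom \(\pi\circ\phi=0\) forces \(\phi(h)\dotplus v=v\dotplus\phi(h)\)), and the defining relations are additivity of \(\phi\), the identities \(\phi(h+\inv h)=\dot 0=\phi(\langle m,m\rangle)\), and the commutator rule \(u\dotplus v\dotminus u\dotminus v=\dotminus\phi(\langle\pi(u),\pi(v)\rangle)\). Uniqueness of \(\gamma\) is then immediate: iterating the last sesquiquadratic axiom expresses \(\gamma(u_i,n)\) for every \(n=\sum_j l_j n_j\) through the \(v_{ij}=\gamma(u_i,n_j)\) and the central corrections \(\phi(\beta(-,\rho(u_i),-))\); the axiom \(\gamma(\phi(h),n)=\phi(\beta(n,h,n))\) pins down \(\gamma\) on \(\phi(H_1)\times N\); and additivity of \(\gamma(-,n)\) then determines \(\gamma(u,n)\) for all \(u\).

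For existence I would first, for each \(i\in I\), construct a map \(\Gamma_i\colon N\to\Delta_2\) with \(\Gamma_i(n_j)=v_{ij}\) and \(\Gamma_i(n+n')=\Gamma_i(n)\dotplus\phi(\beta(n',\rho(u_i),n))\dotplus\Gamma_i(n')\). Since \(N\) is free on the \(n_j\), one may fix an order on \(J\) and set \(\Gamma_i\bigl(\sum_j l_j n_j\bigr)=\sum^\cdot_j l_j^\cdot v_{ij}\dotplus\phi(c)\), where \(c\in H_2\) is the usual quadratic correction assembled from \(\beta(-,\rho(u_i),-)\) (a suitable sum of \(l_j l_{j'}\beta(n_{j'},\rho(u_i),n_j)\) over \(j<j'\) and \(\binom{l_j}{2}\beta(n_j,\rho(u_i),n_j)\)). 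That this \(\Gamma_i\) satisfies the displayed functional equation, as well as \(\pi(\Gamma_i(n))=\alpha(\pi(u_i),n)\) and \(\rho(\Gamma_i(n))=\beta(n,\rho(u_i),n)\), is a direct check using triadditivity of \(\beta\), biadditivity of \(\alpha\), the compatibility \(\langle\alpha(m,n),\alpha(m',n')\rangle=\beta(n,\langle m,m'\rangle,n')\), the \(\pi\)- and \(\rho\)-addition formulas in \(\Delta_2\), and the relation \(\rho(u_i)+\langle\pi(u_i),\pi(u_i)\rangle+\inv{\rho(u_i)}=0\). Next, for each fixed \(n\) I would define \(\gamma(-,n)\colon\Delta_1\to\Delta_2\) to be the group homomorphism determined by \(\phi(h)\mapsto\phi(\beta(n,h,n))\) and \(u_i\mapsto\Gamma_i(n)\). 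This is legitimate by the presentation of \(\Delta_1\) above: the assignment on \(\phi(H_1)\) is well-defined because \(\beta(n,-,n)\) sends \(\Ker\phi\) into \(\Ker\phi\) (it turns \(h+\inv h\) into \(\kappa+\inv\kappa\) via \(\inv{\beta(n,h,n)}=\beta(n,\inv h,n)\), and \(\langle m,m\rangle\) into \(\langle\alpha(m,n),\alpha(m,n)\rangle\)), and the commutator relation of \(\Delta_1\) is respected since \(\gamma(-,n)\) sends \(\dotminus\phi(\langle\pi(u_i),\pi(u_{i'})\rangle)\) to \(\dotminus\phi(\langle\alpha(\pi(u_i),n),\alpha(\pi(u_{i'}),n)\rangle)\), which is exactly the commutator of \(\Gamma_i(n)\) and \(\Gamma_{i'}(n)\) in \(\Delta_2\) by the previous step.

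It remains to verify the other sesquiquadratic axioms for \(\gamma\colon\Delta_1\times N\to\Delta_2\), and this is where the real work lies. The identities involving \(\pi\), \(\phi\), and additivity in the first variable hold by construction. For \(\rho(\gamma(u,n))=\beta(n,\rho(u),n)\) I would show that \(u\mapsto\rho(\gamma(u,n))-\beta(n,\rho(u),n)\) is additive on \(\Delta_1\) — the quadratic corrections cancel precisely because \(\langle\pi(\gamma(u,n)),\pi(\gamma(v,n))\rangle=\beta(n,\langle\pi(u),\pi(v)\rangle,n)\) — and that it vanishes on the generators \(u_i\) and \(\phi(h)\). For the last axiom \(\gamma(u,n+n')=\gamma(u,n)\dotplus\phi(\beta(n',\rho(u),n))\dotplus\gamma(u,n')\), I would note that both sides are homomorphisms \(\Delta_1\to\Delta_2\) in \(u\): the right-hand side is multiplicative because its obstruction to multiplicativity collapses to a term \(\phi(\kappa+\inv\kappa)=\dot 0\) (using centrality of \(\phi(H_2)\), the identity \(\phi(h+\inv h)=\dot 0\), and \(\inv{\beta(n,h,n')}=\beta(n',\inv h,n)\)), and the two homomorphisms agree on \(u_i\) by the functional equation defining \(\Gamma_i\) and on \(\phi(h)\) again up to a \(\phi(\lambda+\inv\lambda)\) term.

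I expect the main obstacle to be exactly this bookkeeping of the central \(\phi(\ldots)\) corrections throughout the last three steps: one repeatedly has to recognise apparent discrepancies as elements of \(\Ker\phi\) by invoking \(\phi(h+\inv h)=\dot 0\), the centrality of \(\phi(H_2)\), and the \(\inv{(-)}\)-compatibility of \(\beta\), and one must make sure the freeness hypothesis on \(\Delta_1\) is used only to build homomorphisms, never to check the \(\rho\)-type axioms (which are genuinely quadratic, not additive). Alternatively, the whole computation can be organized through the polyquadratic-map formalism of \cite[\S1.3]{thesis}, which should make these cancellations automatic.
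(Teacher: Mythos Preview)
Your proposal is correct and amounts to exactly the ``direct check'' the paper alludes to: the paper does not give a proof of this lemma at all, only the blanket remark that all lemmas in this section ``may be checked directly or using the machinery of polyquadratic maps \cite[\S 1.3]{thesis}.'' Your outline carries out the direct route in full, and your closing remark about the polyquadratic-map formalism matches the paper's alternative suggestion.
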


\begin{lemma} \label{ofg-sq-fact}
    Let
    \(
        (\alpha, \beta, \gamma)
        \colon (M_1, H_1, \Delta_1) \times N
        \to (M_2, H_2, \Delta_2)
    \) be a sesquiquadratic map,
    \(
        X \leq \Delta_1
    \) be a subgroup generated by
    \(
        u_i \dotminus v_i
    \), \(i \in I\) such that
    \(
        (M_1, H_1, \Delta_1 / X)
    \) is a well-defined odd form group, \(Y \leq N\) be a subgroup generated by \(
        n_j - n'_j
    \), \(j \in J\). If
    \(
        \gamma(u_i, n) = \gamma(v_i, n)
    \) for all generators \(n \in N\),
    \(
        \gamma(u, n_j) = \gamma(u, n'_j)
    \) for all generators
    \(
        u \in \Delta_1
    \), and
    \(
        (\alpha, \beta)
    \) factors though
    \(
        (M_1, H_1) \times N / Y
    \), then
    \(
        (\alpha, \beta, \gamma)
    \) factors through
    \(
        (M_1, H_1, \Delta_1 / X) \times N / Y
    \).
\end{lemma}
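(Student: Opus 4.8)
Since all the defining identities of a sesquiquadratic map are universally quantified equations, it suffices to produce a well-defined set map $\bar\gamma \colon \Delta_1/X \times N/Y \to \Delta_2$ with $\bar\gamma(u + X,\, n + Y) = \gamma(u,n)$, together with the already-given $(\bar\alpha, \bar\beta) \colon (M_1, H_1) \times N/Y \to (M_2, H_2)$; the axioms relating $\bar\gamma$ to $\bar\alpha$ and $\bar\beta$ then hold automatically because they hold for $\gamma$, $\alpha$, $\beta$ on representatives. So the whole plan is to check this well-definedness, one variable at a time.

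For the first variable fix $n \in N$. The identity $\gamma(u \dotplus u', n) = \gamma(u, n) \dotplus \gamma(u', n)$ says $\gamma(-, n) \colon \Delta_1 \to \Delta_2$ is a group homomorphism, so it is enough to show $\gamma(u_i \dotminus v_i, n) = \dot 0$ for all $i \in I$ and all $n \in N$ (not just for the generators of $N$). The crucial point is that $\rho(u_i \dotminus v_i) = 0$: indeed, since $(M_1, H_1, \Delta_1/X)$ is a well-defined odd form group, $\pi$ and $\rho$ descend along $\Delta_1 \to \Delta_1/X$ (cf. Lemma~\ref{ofg-fact}), hence vanish on $X$; in particular on $u_i \dotminus v_i$. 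Therefore the cross-term $\phi(\beta(n', \rho(u_i \dotminus v_i), n))$ vanishes and $n \mapsto \gamma(u_i \dotminus v_i, n)$ is additive in $n$; being trivial on a generating set of $N$ by hypothesis, it is identically $\dot 0$. Thus $\gamma(-, n)$ kills $X$ for every $n$, and $\gamma$ factors through $\Delta_1/X$ in its first argument.

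For the second variable I would first upgrade the hypothesis $\gamma(u, n_j) = \gamma(u, n'_j)$ from a generating set of $\Delta_1$ to all $u \in \Delta_1$: for $u = \phi(h)$ it reads $\phi(\beta(n_j, h, n_j)) = \phi(\beta(n'_j, h, n'_j))$, which holds because $(\alpha, \beta)$ factors through $N/Y$ and $n_j - n'_j \in Y$, and the general case follows since $\gamma(-, n_j)$ and $\gamma(-, n'_j)$ are homomorphisms. Now from $\gamma(u, m + n) = \gamma(u, m) \dotplus \phi(\beta(n, \rho(u), m)) \dotplus \gamma(u, n)$, using $\beta(n_j, \rho(u), m) = \beta(n'_j, \rho(u), m)$ and $\gamma(u, n_j) = \gamma(u, n'_j)$, one gets $\gamma(u, m + n_j) = \gamma(u, m + n'_j)$ for every $m \in N$; taking $m = n - n'_j$ yields $\gamma(u, n + (n_j - n'_j)) = \gamma(u, n)$. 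Finally the set $S_u = \{ y \in Y : \gamma(u, n + y) = \gamma(u, n)\ \text{for all } n \in N \}$ contains $0$, and closure under addition and negation is immediate by evaluating the defining condition at shifted base points; since $S_u$ also contains each generator $n_j - n'_j$ of $Y$, we get $S_u = Y$. Hence $\gamma(u, n)$ depends only on $n + Y$, and combined with the previous paragraph the map $\bar\gamma$ is well defined, so $(\bar\alpha, \bar\beta, \bar\gamma)$ is sesquiquadratic.

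The only load-bearing computation is $\rho(u_i \dotminus v_i) = 0$: it is precisely what makes $\gamma(u_i \dotminus v_i, -)$ additive in the $N$-variable and so lets us pass from vanishing on generators of $N$ to vanishing everywhere. The remainder is bookkeeping — chiefly the two ``extend from a generating set to all elements'' steps, each handled by the appropriate homomorphism property of $\gamma$ — so I do not expect a real obstacle.
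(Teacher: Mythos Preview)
Your argument is correct. The paper does not give a proof of this lemma at all: it states that ``all lemmas in this section may be checked directly or using the machinery of polyquadratic maps'' and leaves it at that. What you have written is precisely the direct check the paper alludes to, carried out carefully --- including the key observation that $\rho(u_i \dotminus v_i) = 0$ forces $n \mapsto \gamma(u_i \dotminus v_i, n)$ to be additive, and the stabilizer-subgroup argument for the $N$-variable.
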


\begin{lemma} \label{ofg-sq-ass}
    Let
    \(
        (\alpha_{ij}, \beta_{ij}, \gamma_{ij})
        \colon (M_i, H_i, \Delta_i) \times N_{ij}
        \to (M_j, H_j, \Delta_j)
    \) be sesquiquadratic maps for
    \(
        1 \leq i < j \leq 3
    \) and
    \(
        \mu \colon N_{12} \times N_{23} \to N_{13}
    \) be a biadditive map. Suppose that
    \(
        (\alpha_{ij}, \beta_{ij})
    \) together with \(\mu\) satisfy the associative law and
    \(
        \gamma_{23}(\gamma_{12}(u_1, n_{12}), n_{23})
        = \gamma_{13}(u_1, \mu(n_{12}, n_{23}))
    \) for all generators
    \(
        u \in \Delta_1
    \),
    \(
        n_{12} \in N_{12}
    \),
    \(
        n_{23} \in N_{23}
    \). Then
    \(
        (\alpha_{ij}, \beta_{ij}, \gamma_{ij})
    \) together with \(\mu\) satisfy the associative law.
\end{lemma}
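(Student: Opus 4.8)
The plan is to reduce the full associative law for \((\alpha_{ij},\beta_{ij},\gamma_{ij})\) together with \(\mu\) — whose \(\alpha\)- and \(\beta\)-parts are assumed — to its \(\gamma\)-part, i.e.\ to the identity \(f(u,n_{12},n_{23}) = g(u,n_{12},n_{23})\) for all \(u \in \Delta_1\), where I write \(f(u,n_{12},n_{23}) := \gamma_{23}(\gamma_{12}(u,n_{12}),n_{23})\) and \(g(u,n_{12},n_{23}) := \gamma_{13}(u,\mu(n_{12},n_{23}))\). The idea is to form the ``difference'' of these two maps and show that it is a triadditive map into a central subgroup which vanishes on generators.

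First I would record two structural observations. One: in any odd form group \(\Ker\pi\) is a central subgroup of \(\Delta\), since \(u \dotplus v = v \dotplus u \dotminus \phi(\langle\pi(u),\pi(v)\rangle)\) and \(\langle 0, - \rangle = 0\); in particular \(\phi(H) \leq \Ker\pi\) is central. Two: \(\pi\circ f = \pi\circ g\), which is immediate from \(\pi(\gamma(u,n)) = \alpha(\pi(u),n)\) and the \(\alpha\)-part of the associative law. Consequently \(\delta(u,n_{12},n_{23}) := f(u,n_{12},n_{23}) \dotminus g(u,n_{12},n_{23})\) always lies in the central subgroup \(\Ker\pi_3 \leq \Delta_3\), so that \(f = \delta \dotplus g\), and the task becomes to prove \(\delta \equiv \dot 0\).

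Next I would check that \(\delta\) is additive in each of its three arguments. In the \(\Delta_1\)-variable this follows from \(\gamma(u \dotplus u', n) = \gamma(u,n) \dotplus \gamma(u',n)\) (applied to \(\gamma_{12}\) and then to \(\gamma_{23}\)), using the centrality of the values of \(\delta\) to rearrange factors; moreover \(\delta(\phi(h),n_{12},n_{23}) = \dot 0\) by a direct computation from \(\gamma(\phi(h),n) = \phi(\beta(n,h,n))\) and the \(\beta\)-part of the associative law, so \(\delta\) descends to a map on \(\Delta_1/\phi(H_1)\). In the \(N_{12}\)- and \(N_{23}\)-variables one uses the quadratic additivity \(\gamma(u,n+n') = \gamma(u,n) \dotplus \phi(\beta(n',\rho(u),n)) \dotplus \gamma(u,n')\) together with biadditivity of \(\mu\) and the identity \(\rho(\gamma_{12}(u,n_{12})) = \beta_{12}(n_{12},\rho(u),n_{12})\): expanding \(f(u,a+b,n_{23})\) and \(g(u,a+b,n_{23})\) produces, on both sides, the same quadratic correction term \(\phi(\beta_{13}(\mu(b,n_{23}),\rho(u),\mu(a,n_{23})))\) precisely by the \(\beta\)-part of the associative law; since this term lies in \(\phi(H_3) \leq \Ker\pi_3\) it is central and cancels in the difference, leaving \(\delta(u,a+b,n_{23}) = \delta(u,a,n_{23}) \dotplus \delta(u,b,n_{23})\), and the \(N_{23}\)-variable is handled symmetrically.

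Finally, \(\delta\) is thus a triadditive map \(\Delta_1/\phi(H_1) \times N_{12} \times N_{23} \to \Ker\pi_3\), hence determined by its values on a generating triple, and by hypothesis these values are \(\dot 0\); therefore \(\delta \equiv \dot 0\), i.e.\ \(f = g\), which together with the assumed \(\alpha\)- and \(\beta\)-parts is exactly the associative law. The only genuinely delicate point is the second observation: because \(\Delta_3\) is non-abelian one cannot subtract \(f\) and \(g\) naively, and the whole argument hinges on first establishing that \(f\) and \(g\) have equal \(\pi\)-image so that their difference lands in the central subgroup \(\Ker\pi_3\); once that is in place, the additivity verifications and the cancellation of the quadratic correction terms — where the \(\beta\)-associativity and the biadditivity of \(\mu\) are used — are routine.
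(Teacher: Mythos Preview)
Your proof is correct and complete. The paper does not actually give a proof of this lemma: it states at the beginning of \S2 that ``all lemmas in this section may be checked directly or using the machinery of polyquadratic maps,'' and leaves it at that. What you have written is precisely such a direct check, carried out carefully; in particular, your key observation that \(\pi \circ f = \pi \circ g\) (so that the difference \(\delta\) lands in the central subgroup \(\Ker\pi_3\) and can be treated as a genuinely triadditive map into an abelian group) is exactly the point one needs to make the non-abelian subtraction work, and your verification that the quadratic correction terms \(\phi(\beta_{13}(\ldots))\) match on both sides via the \(\beta\)-associativity is correct. Note also that your use of ``generators of \(\Delta_1\)'' agrees with the paper's convention (stated just after Lemma~\ref{ofg-free}) that these are elements generating \(\Delta_1/\phi(H_1)\) as an abelian group, which is why showing \(\delta(\phi(h),{-},{-}) = \dot 0\) is needed and sufficient.
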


Now we recall the definitions from \cite[\S\S 1.4--1.7, 2.1, 2.2]{thesis} or \cite[\S\S 3, 4]{classic-ofa}. Let \(K\) be a commutative unital ring. If \(R\) is an associative \(K\)-algebra (not necessarily unital), then the semi-direct product
\(
    R \rtimes K
\) is the abelian group
\(
    R \oplus K
\) with the multiplication
\[
    (a \oplus k) (a' \oplus k')
    = (aa' + ka' + ak') \oplus kk',
\]
it is an associative ring with the identity
\(
    0 \oplus 1
\) and ideal \(R\). An involution on \(R\) is an anti-automorphism
\(
    \inv{(-)} \colon R \to R
\) of order at most \(2\), i.e. a \(K\)-linear map such that
\(
    \inv{ab} = \inv{\,b\,} \inv a
\) and
\(
    \inv{\inv a} = a
\). Any such involution continues to
\(
    R \rtimes K
\) by
\(
    \inv{ (a \oplus k) } = \inv a \oplus k
\) and makes \((R, R)\) a hermitian group with
\(
    \langle a, b \rangle = \inv ab
\).

A \textit{odd form \(K\)-algebra}
\(
    (R, \Delta)
\) consists of an associative \(K\)-algebra \(R\) with an involution and an odd form parameter on \((R, R)\) together with a sesquiquadratic map
\[
    (
        (a, b) \mapsto ab,
        (a, b, c) \mapsto \inv a b c,
        (u, b) \mapsto u \cdot b
    )
    \colon (R, R, \Delta) \times (R \rtimes K)
    \to (R, R, \Delta)
\]
satisfying the identity
\(
    u \cdot 1 = u
\) and the associative law. Its \textit{unitary group} is
\[
    \unit(R, \Delta) = \{
        g \in \Delta
    \mid
        \pi(g) = \inv{\rho(g)},
        \pi(g) \inv{\pi(g)} = \inv{\pi(g)} \pi(g)
    \}
\]
with the group operation
\(
    gh = g \cdot \pi(h) \dotplus h \dotplus g
\).

An \textit{augmented odd form \(K\)-algebra}
\(
    (R, \Delta, \mathcal D)
\)  consists of an odd form \(K\)-algebra
\(
    (R, \Delta)
\) and an intermediate subgroup
\(
    \phi(R) \leq \mathcal D \leq \Ker(\pi)
\) (so-called \textit{augmentation}) with a structure of a left \(K\)-module such that
\begin{align*}
    \mathcal D \cdot (R \rtimes K)
    &\subseteq \mathcal D,
    &
    v \cdot k &= k^2 v,
    &
    \phi(ka) &= k \phi(a),
    \\ &&
    \rho(kv) &= k \rho(v),
    &
    kv \cdot b &= k(v \cdot b).
\end{align*}
for all
\(
    v \in \mathcal D
\), \(k \in K\), \(a \in R\),
\(
    b \in R \rtimes K
\).

For every commutative unital ring \(K\) there are natural augmented odd form \(K\)-algebras with ``classical'' unitary groups \cite[\S 2.3]{thesis}, \cite[\S 5]{classic-ofa}, i.e. isomorphic to various general linear groups, symplectic groups, and orthogonal groups over \(K\). Let \(R\) be an associative \(K\)-algebra with an involution. Then
\(
    R \times R
\) admits a group operation
\[
    (a, b) \dotplus (c, d) = (
        a + c,
        b - \inv a c + d
    )
\]
and an action of the multiplicative monoid
\(
    (R \rtimes K)^\bullet
\) given by
\[
    (a, b) \cdot (c, k) = (
        ac + ak,
        \inv{\,c\,} bc + (\inv{\,c\,}b + bc) k + bk^2
    ),
\]
so
\(
    \Delta = \{
        (a, b) \mid b + \inv aa + \inv{\,b\,} = 0
    \}
\) is an
\(
    (R \rtimes K)^\bullet
\)-invariant subgroup of
\(
    R \times R
\). The pair
\(
    (R, \Delta, \mathcal D)
\) is an augmented odd form \(K\)-algebra with
\(
    \phi(a) = (0, a - \inv a)
\),
\(
    \pi(a, b) = a
\),
\(
    \rho(a, b) = b
\), and
\(
    \mathcal D = \{ (0, b) \mid b + \inv{\,b\,} = 0 \}
\). If
\(
    R = \mat(n, K \times K)
\) with the involution induced by the permutation of factors, then we obtain the \textit{linear odd form algebra}
\(
    \ofalin(n, K)
\). If
\(
    R = \mat(2n, K)
\) with the involution
\(
    a \mapsto J^{-1} a^{\mathrm t} J
\) for the matrix \(J\) of the split symplectic form on \(K^{2n}\), then we obtain the \textit{symplectic odd form algebra}
\(
    \ofasymp(2n, K)
\). In the case
\(
    2 \in K^*
\) the same construction using split symmetric bilinear forms gives \textit{orthogonal odd form algebras}
\(
    \ofaorth(n, K)
\), but in general these odd form algebras are constructed in another way.

A \textit{Peirce decomposition} of rank
\(
    \ell \geq 0
\) of an augmented odd form \(K\)-algebra
\(
    (R, \Delta, \mathcal D)
\) is a decomposition
\[
    R
    = \bigoplus_{-\ell \leq i, j \leq \ell} R_{ij},
    \quad \Delta
    = \bigoplus^\cdot_{ \substack{
        -\ell \leq i, j \leq \ell\\
        i + j > 0
    } } \phi(R_{ij})
    \dotoplus \bigoplus^\cdot_{
        -\ell \leq i, j \leq \ell
    } \Delta^i_j
\]
such that
\begin{itemize}
    \item \(R_{ij}\) are \(K\)-submodules and
    \(
        \Delta^i_j
    \) are
    \(
        K^\bullet
    \)-invariant subgroups;
    \item
    \(
        \mathcal D_i = \mathcal D \cap \Delta^0_i
    \) are \(K\)-submodules (this condition is missed in \cite{thesis}),
    \(
        \mathcal D
        = \bigoplus_{
            -\ell \leq i \leq \ell
        }^\cdot \mathcal D_i
    \);
    \item
    \(
        R_{ij} R_{kl} = 0
    \) for \(j \neq k\),
    \(
        R_{ij} R_{jk} \leq R_{ik}
    \),
    \(
        \inv{R_{ij}} = R_{-j, -i}
    \);
    \item
    \(
        \pi \colon \Delta^i_j \to R_{ij}
    \) are isomorphisms and
    \(
        \rho(\Delta^i_j) = 0
    \) for \(i \neq 0\);
    \item
    \(
        \phi(R_{-i, i}) \leq \mathcal D_i
    \),
    \(
        \pi(\Delta^0_i) \leq R_{0i}
    \),
    \(
        \rho(\Delta^0_i) \subseteq R_{-i, i}
    \);
    \item
    \(
        \Delta^i_j \cdot R_{kl} = \dot 0
    \) for \(j \neq k\),
    \(
        \Delta^i_j \cdot R_{jk} \subseteq \Delta^i_k
    \).
\end{itemize}
The inverse map to the isomorphism
\(
    \pi \colon \Delta^i_j \to R_{ij}
\) for \(i \neq 0\) is denoted by
\(
    a \mapsto q_i \cdot a
\), see \cite[lemma 7]{thesis} for details. For simplicity let \textit{graded odd form \(K\)-algebras} be augmented odd form \(K\)-algebras with chosen Peirce decompositions.

Let
\(
    (R, \Delta, \mathcal D)
\) be a graded odd form \(K\)-algebra. \textit{Elementary transvections} in its unitary group are
\begin{itemize}

    \item
    \(
        T_{ij}(a)
        = q_i \cdot a
        \dotminus q_{-j} \cdot \inv a
        \dotminus \phi(a)
    \) for \(a \in R_{ij}\) and distinct \(0\), \(|i|\), \(|j|\);

    \item
    \(
        T_i(u)
        = u
        \dotplus q_{-i} \cdot (\rho(u) - \inv{\pi(u)})
        \dotminus \phi(\rho(u) + \pi(u))
    \) for
    \(
        u \in \Delta^0_i
    \) and \(i \neq 0\).

\end{itemize}
They satisfy the so-called \textit{Steinberg relations}, where all indices are non-zero and with distinct absolute values:
\begin{align*}
    T_{ij}(a)\, T_{ij}(b) &= T_{ij}(a + b)
    &
    T_{ij}(a) &= T_{-j, -i}(-\inv a)
    \\
    T_j(u)\, T_j(v) &= T_j(u \dotplus v)
    &
    [T_{ij}(a), T_{jk}(b)] &= T_{ik}(ab)
    \\
    [T_{ij}(a), T_{kl}(b)] &= 1
    &
    [T_{-i, j}(a), T_{ji}(b)] &= T_i(\phi(ab))
    \\
    [T_{ij}(a), T_{ik}(b)] &= 1
    &
    [T_i(u), T_j(v)]
    &=
    T_{-i, j}(-\inv{\pi(u)} \pi(v))
    \\
    [T_i(u), T_{jk}(a)] &= 1
    &
    [T_i(u), T_{ij}(a)]
    &=
    T_{-i, j}(\rho(u) a)\,
    T_j(\dotminus u \cdot (-a)).
\end{align*}

We say that a graded odd form \(K\)-algebra satisfies the \textit{idempotency conditions} if
\begin{itemize}
    \item
    \(
        R_{ij} = \langle
            R_{ik} R_{kj},
            R_{i, -k} R_{-k, j}
        \rangle
    \) for all \(i\), \(j\), and \(k \neq 0\);
    \item
    \(
        \Delta^0_i = \langle
            \Delta^0_j \cdot R_{ji},
            \Delta^0_{-j} \cdot R_{-j, i},
            \phi(R_{-i, i})
        \rangle
    \) for all \(i\) and \(j \neq 0\);
    \item
    \(
        \dot 0
        \to \mathcal D_i
        \to \Delta^0_i
        \xrightarrow{\pi} R_{0i}
        \to 0
    \) are short exact sequences for all \(i\),
\end{itemize}
where \(AB\) and \(A \cdot B\) are the corresponding Minkowski operations on sets.

\section{Groups with \(\mathsf{BC}_\ell\)-commutator relations}

Let
\[
    \Phi = \{
        \pm \mathrm e_i \pm \mathrm e_j,
        \pm \mathrm e_i
    \mid
        1 \leq i, j \leq \ell;\,
        i \neq j
    \} \subseteq \mathbb R^\ell
\]
be a root system of type
\(
    \mathsf B_\ell
\) for some
\(
    \ell \geq 3
\). The Weyl group
\(
    \mathrm W(\Phi)
    = \mathbb Z / 2 \mathbb Z \wr \mathrm S_\ell
\) acts on \(\Phi\) by permutations and sign changes of the coordinates. It is convenient to set
\(
    \mathrm e_{-i} = -\mathrm e_i
\) for all
\(
    1 \leq i \leq \ell
\). All integer indices in the paper run from \(-\ell\) to \(\ell\) unless otherwise specified.

We say that a subset
\(
    \Sigma \subseteq \Phi
\) is \textit{special closed} if it is an intersection of \(\Phi\) and a convex cone not containing opposite non-zero vectors. An element
\(
    \alpha \in \Sigma
\) of a special closed subset is called extreme if it is an extreme point of the convex conical hull of \(\Sigma\), i.e. if there is a hyperplane \(H\) such that
\(
    \{ \alpha \} = \Sigma \cap H
\) and
\(
    \Sigma \setminus \{ \alpha \}
\) lies in one of the half-spaces with boundary \(H\). Every non-empty special closed subset \(\Sigma\) contains at least one extreme element \(\alpha\) and
\(
    \Sigma \setminus \alpha
\) is also special closed for such \(\alpha\). We say that a linear order on a special closed subset
\(
    \Sigma \subseteq \Phi
\) is \textit{right extreme} if either
\(
    \Sigma = \varnothing
\) of the largest element
\(
    \alpha_{\mathrm{max}} \in \Sigma
\) is extreme and the induced order on
\(
    \Sigma \setminus \{ \alpha_{\mathrm{max}} \}
\) is also right extreme.

Let \(G\) be a group with a family of subgroups
\(
    G_\alpha \leq G
\) for
\(
    \alpha \in \Phi
\) satisfying
\begin{enumerate}[label = {(C\arabic*)}]

    \item \label{comm-rel}
    \(
        [G_\alpha, G_\beta] \leq \langle
            G_{i \alpha + j \beta}
        \mid
            i \alpha + j \beta \in \Phi;\,
            i, j \in \mathbb R_+
        \rangle
    \) for all
    \(
        \alpha, \beta \in \Phi
    \) such that
    \(
        \alpha \neq \pm \beta
    \).

    \item \label{non-deg}
    \(
        G_\alpha \cap \langle
            G_\beta
        \mid
            \beta \in \Sigma \setminus \{ \alpha \}
        \rangle = 1
    \) for any special closed \(\Sigma\) and extreme
    \(
        \alpha \in \Sigma
    \).

    \item \label{elementary} \(G\) is generated by all \(G_\alpha\).

    \item \label{long-gen}
    \(
        G_{\mathrm e_i + \mathrm e_j}
        \leq \up{\langle
            G_{\mathrm e_i - \mathrm e_j},
            G_{\mathrm e_j - \mathrm e_i},
            G_{\mathrm e_k},
            G_{-\mathrm e_k}
        \rangle}{\langle
            G_{\mathrm e_i + \mathrm e_k},
            G_{\mathrm e_i},
            G_{\mathrm e_i - \mathrm e_k},
            G_{\mathrm e_j + \mathrm e_k},
            G_{\mathrm e_j},
            G_{\mathrm e_j - \mathrm e_k}
        \rangle}
    \) for distinct \(|i|\), \(|j|\), \(|k|\).

    \item \label{short-gen}
    \(
        G_{\mathrm e_i} \leq \up{\langle
            G_{\mathrm e_j},
            G_{-\mathrm e_j}
        \rangle}{\langle
            G_{\mathrm e_i + \mathrm e_j},
            G_{\mathrm e_i - \mathrm e_j}
        \rangle}
    \) for \(|i| \neq |j|\).

\end{enumerate}

The roots appearing in \ref{long-gen} are shown in figure \ref{stereo} after projection onto the unit sphere and stereographic projection from
\(
    -\frac{\sqrt 2}2 (\mathrm e_1 + \mathrm e_2)
\). Lines and circles correspond to root subsystems of rank \(2\), the dashed ones are of type
\(
    \mathsf A_1 + \mathsf A_1
\). The property \ref{long-gen} means that if \(\alpha\) is long and
\(
    \alpha \in \Psi \subseteq \Phi
\) is a root subsystem of type
\(
    \mathsf{BC}_3
\), then \(G_\alpha\) is contained in the smallest subgroup containing \(G_\beta\) for all
\(
    \beta \in \Psi
\) at an acute angle to \(\alpha\) and normalized by \(G_\beta\) for all
\(
    \beta \in \Psi
\) orthogonal to \(\alpha\). Replacing ``long'' by ``short'' and
\(
    \mathsf{BC}_3
\) by
\(
    \mathsf{BC}_2
\) we obtain \ref{short-gen}.

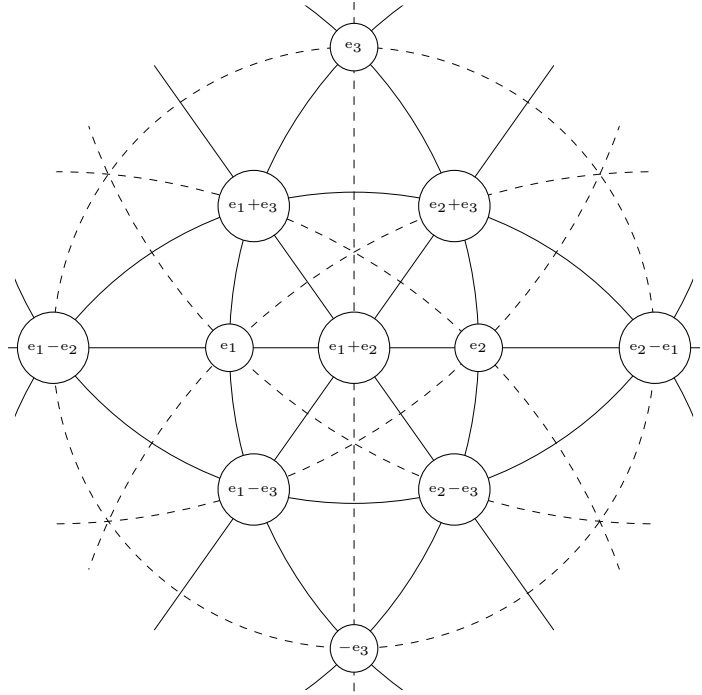
\begin{figure}[h]
    \centering
    \begin{tikzpicture}[scale=4.0]

        \tkzDefPoint(0,0){p1p2}
        \tkzDefPoint(-1,0){p1m2}
        \tkzDefPoint(1,0){m1p2}
        \tkzDefPoint(-1/3,\tkzSqrTwo/3){p1p3}
        \tkzDefPoint(-1/3,-\tkzSqrTwo/3){p1m3}
        \tkzDefPoint(1/3,\tkzSqrTwo/3){p2p3}
        \tkzDefPoint(1/3,-\tkzSqrTwo/3){p2m3}
        \tkzDefPoint(1-\tkzSqrTwo,0){p1}
        \tkzDefPoint(\tkzSqrTwo-1,0){p2}
        \tkzDefPoint(\tkzSqrTwo+1,0){m1}
        \tkzDefPoint(-\tkzSqrTwo-1,0){m2}
        \tkzDefPoint(0,1){p3}
        \tkzDefPoint(0,-1){m3}
        \tkzDefPoint(1.15,0){Clip}

        \tkzClipCircle(p1p2,Clip)

        \tkzDrawLine[color=black,style=dashed,line width=0.2pt,add=2 and 2](p3,m3)
        \tkzDrawLines[color=black,line width=0.2pt,add=2 and 2](p1m2,m1p2 p1p3,p2m3 p2p3,p1m3)

        \tkzSetUpCircle[color=black,line width=0.2pt]
        \circumcircle{p1m2}{p1p3}{p2p3}{Cp}
        \circumcircle{p1m2}{p1m3}{p2m3}{Cm}
        \circumcircle{p3}{p1}{m3}{O1}
        \circumcircle{p3}{p2}{m3}{O2}
        \tkzSetUpCircle[color=black,line width=0.2pt,style=dashed]
        \tkzDrawCircles(p1p2,p3)
        \circumcircle{p1}{p2p3}{m1}{S2p}
        \circumcircle{p2}{p1p3}{m2}{S1p}
        \circumcircle{p1}{p2m3}{m1}{S2m}
        \circumcircle{p2}{p1m3}{m2}{S1m}

        \tkzSetUpPoint[size=27,fill=white]
        \tkzDrawPoints(p1p2,p1m2,m1p2,p1p3,p1m3,p2p3,p2m3)
        \tkzSetUpPoint[size=18,fill=white]
        \tkzDrawPoints(p1,p2,p3,m3)

        \tkzSetUpLabel[centered, font=\tiny]
        \tkzLabelPoint(p1){\(\mathrm e_1\)}
        \tkzLabelPoint(p2){\(\mathrm e_2\)}
        \tkzLabelPoint(p3){\(\mathrm e_3\)}
        \tkzLabelPoint(m3){\(-\mathrm e_3\)}
        \tkzLabelPoint(p1p3){\(
            \mathrm e_1 {+} \mathrm e_3
        \)}
        \tkzLabelPoint(p1p2){\(
            \mathrm e_1 {+} \mathrm e_2
        \)}
        \tkzLabelPoint(p2p3){\(
            \mathrm e_2 {+} \mathrm e_3
        \)}
        \tkzLabelPoint(p2m3){\(
            \mathrm e_2 {-} \mathrm e_3
        \)}
        \tkzLabelPoint(m1p2){\(
            \mathrm e_2 {-} \mathrm e_1
        \)}
        \tkzLabelPoint(p1m2){\(
            \mathrm e_1 {-} \mathrm e_2
        \)}
        \tkzLabelPoint(p1m3){\(
            \mathrm e_1 {-} \mathrm e_3
        \)}

    \end{tikzpicture}

    \caption{Stereographic projection of \(\mathsf B_3\)} \label{stereo}
\end{figure}

For example, if
\(
    (R, \Delta, \mathcal D)
\) is a graded odd form algebra satisfying the idempotency conditions, then the elementary group
\(
    \eunit(R, \Delta)
    = \langle T_{ij}(a), T_i(u) \rangle
    \leq \unit(R, \Delta)
\) satisfies \ref{comm-rel}--\ref{short-gen}. Indeed, \ref{non-deg} follows from \cite[lemma 10]{thesis}. The Steinberg group
\(
    \stunit(R, \Delta)
\) is the abstract group generated by \(X_{ij}(a)\) for
\(
    a \in R_{ij}
\),
\(
    0 \neq |i| \neq |j| \neq 0
\) and \(X_i(u)\) for
\(
    u \in \Delta^0_i
\),
\(
    i \neq 0
\) satisfying the Steinberg relations, so it also satisfies \ref{comm-rel}--\ref{short-gen}. Note that here the idempotency conditions are used only for \(R_{ij}\) and
\(
    \Delta^0_i
\) with non-zero \(i\) and \(j\), these cases are a weakening of the firmness condition \cite[\S 3.3]{thesis}.

\begin{lemma} \label{unip-subgr}
    Let
    \(
        \Sigma \subseteq \Phi
    \) be a special closed subset with a right extreme order. Then the product map from the Cartesian product
    \(
        \prod^{\mathbf{Set}}_{
            \alpha \in \Sigma
        } G_\alpha
    \) to \(G\) is injective and its image
    \(
        \prod_{\alpha \in \Sigma} G_\alpha
    \) is a subgroup.
\end{lemma}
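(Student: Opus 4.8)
The plan is to induct on \(|\Sigma|\), peeling off the largest root of the order. When \(\Sigma = \varnothing\) both assertions are trivial. Otherwise let \(\alpha\) be the largest element of \(\Sigma\); by the definition of a right extreme order \(\alpha\) is extreme, the subset \(\Sigma' = \Sigma \setminus \{\alpha\}\) is special closed, and the induced order on \(\Sigma'\) is again right extreme, so the inductive hypothesis applies to \(\Sigma'\): the product map \(\prod^{\mathbf{Set}}_{\beta \in \Sigma'} G_\beta \to G\) is injective with image a subgroup \(P' := \prod_{\beta \in \Sigma'} G_\beta = \langle G_\beta \mid \beta \in \Sigma' \rangle\). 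Since \(\alpha\) sits at the top of the order, the product map for \(\Sigma\) has image \(P' G_\alpha\) and factors as the bijection \(\bigl(\prod^{\mathbf{Set}}_{\beta\in\Sigma'} G_\beta\bigr) \times G_\alpha \xrightarrow{\sim} P' \times G_\alpha\) followed by multiplication. Hence it suffices to show that \textbf{(i)} \(G_\alpha\) normalizes \(P'\) — so that \(P' G_\alpha = G_\alpha P'\) is a subgroup — and \textbf{(ii)} \(P' \cap G_\alpha = 1\) — so that multiplication \(P' \times G_\alpha \to G\) is injective, a pair \(hg = h'g'\) forcing \(h'^{-1}h = g'g^{-1} \in P' \cap G_\alpha = 1\).

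For (i), fix \(\beta \in \Sigma'\). Since \(\Sigma\) is special closed it contains no pair of opposite nonzero roots, so \(\beta \neq \pm\alpha\) and \ref{comm-rel} gives \([G_\alpha, G_\beta] \leq \langle G_\gamma \mid \gamma = i\alpha + j\beta \in \Phi,\ i, j \in \mathbb{R}_+ \rangle\). The key point is that every root \(\gamma\) occurring here lies in \(\Sigma'\). Indeed, \(\gamma\) is a positive linear combination of \(\alpha, \beta \in \Sigma\), so it lies in the convex cone \(C\) with \(\Sigma = \Phi \cap C\), whence \(\gamma \in \Sigma\); and \(\gamma \neq \alpha\), because \(\gamma = \alpha\) would give \((i-1)\alpha = -j\beta\) with \(j > 0\), forcing \(\beta\) to be a nonzero scalar multiple of \(\alpha\), which in \(\mathsf B_\ell\) means \(\beta = \pm\alpha\), contrary to the above. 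Therefore \([G_\alpha, G_\beta] \leq \langle G_\gamma \mid \gamma \in \Sigma' \rangle = P'\), so for any \(g \in G_\alpha\) and \(h \in G_\beta\) we have \(ghg^{-1} = [g,h]\,h \in [G_\alpha, G_\beta]\,G_\beta \subseteq P'\). As \(\beta \in \Sigma'\) was arbitrary and the \(G_\beta\) generate \(P'\), this yields \(g P' g^{-1} \subseteq P'\), and the same applied to \(g^{-1}\) gives \(g P' g^{-1} = P'\).

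For (ii), observe that \(P' \leq \langle G_\beta \mid \beta \in \Sigma \setminus \{\alpha\}\rangle\), hence \(P' \cap G_\alpha \leq G_\alpha \cap \langle G_\beta \mid \beta \in \Sigma \setminus \{\alpha\}\rangle = 1\) by \ref{non-deg}, which applies precisely because \(\alpha\) is extreme in \(\Sigma\). Together (i) and (ii) complete the induction. I expect the only delicate point to be the claim in (i) that the commutator formula never reproduces the root \(\alpha\) we have just removed; this combines the closedness of \(\Sigma\) under positive combinations with the fact that in \(\mathsf B_\ell\) no root is proportional to another except its own opposite, and once this is secured the remainder is routine group theory built on \ref{comm-rel} and \ref{non-deg}.
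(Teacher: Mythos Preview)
Your proof is correct and follows essentially the same route as the paper's: induct on \(|\Sigma|\), peel off the largest (extreme) root \(\alpha\), use \ref{comm-rel} to see that \(G_\alpha\) normalizes the inductive product \(P'=\prod_{\beta\in\Sigma\setminus\{\alpha\}}G_\beta\), and use \ref{non-deg} for the injectivity. The paper is terser---it just asserts that \(Y\) is normalized by \(G_\mu\) ``by \ref{comm-rel}''---whereas you spell out why the roots \(i\alpha+j\beta\) produced by the commutator formula stay inside \(\Sigma'\); but the argument is the same.
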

\begin{proof}
    We prove that
    \(
        X = \prod_{\alpha \in \Sigma} G_\alpha
    \) is a subgroup by induction on \(|\Sigma|\). Since \(1 \in X\), it suffice to check that
    \(
        G_\beta X \subseteq X
    \) for all
    \(
        \beta \in \Sigma
    \). Let
    \(
        \mu \in \Sigma
    \) be the largest root and
    \(
        Y = \prod_{
            \alpha \in \Sigma \setminus \{ \mu \}
        } G_\alpha
    \), it is a subgroup by the induction hypothesis. If
    \(
        \beta \neq \mu
    \), then
    \[
        G_\beta X
        = G_\beta Y G_\mu
        \subseteq Y G_\mu
        = X.
    \]
    Otherwise note that \(Y\) is normalized by \(G_\mu\) by \ref{comm-rel}, so again
    \[
        G_\mu X
        = G_\mu Y G_\mu
        \subseteq Y G_\mu
        = X.
    \]
    The first claim follows by easy induction on \(|\Sigma|\) using \ref{non-deg}.
\end{proof}

Now let us construct an analogue of a graded odd form algebra by \(G\) using only \ref{comm-rel} and \ref{non-deg}. Let
\(
    T_{ij}
    \colon R_{ij}
    \to G_{\mathrm e_j - \mathrm e_i}
\) and
\(
    T_i \colon \Delta^0_i \to G_{\mathrm e_i}
\) be arbitrary group isomorphisms for non-zero
\(
    |i| \neq |j|
\) and some groups \(R_{ij}\),
\(
    \Delta^0_i
\). The group operations of \(R_{ij}\) and
\(
    \Delta^0_i
\) are denoted by \(+\) and \(\dotplus\) respectively. Since \(R_{ij}\) and \(R_{-j, -i}\) parameterize the same subgroup of \(G\), there are anti-isomorphisms
\(
    R_{ij} \to R_{-j, -i},\, a \mapsto \inv a
\) such that
\(
    \inv{\inv a} = a
\). The non-trivial commutator relations \ref{comm-rel} may be written as the following Steinberg relations, where all indices are assumed to be non-zero and with distinct absolute values:
\begin{enumerate}[label = {(St\arabic*)}, start = 1]

    \item \label{st-a2-ob}
    \(
        [T_{ij}(a), T_{jk}(b)] = T_{ik}(ab)
    \) for unique
    \(
        ab \in R_{ik}
    \);

    \item \label{st-b2-ll}
    \(
        [T_{-i, j}(a), T_{ji}(b)] = T_i(a * b)
    \) for unique
    \(
        a * b \in \Delta^0_i
    \);

    \item \label{st-b2-ss}
    \(
        [T_i(u), T_j(v)] = T_{-i, j}(-u \circ v)
    \) for unique
    \(
        u \circ v \in R_{-i, j}
    \);

    \item \label{st-b2-ob}
    \(
        [T_i(u), T_{ij}(a)]
        = T_{-i, j}(u \triangleleft a)\,
        T_j(\dotminus u \cdot (-a))
    \) for unique
    \(
        u \triangleleft a \in R_{-i, j}
    \),
    \(
        u \cdot (-a) \in \Delta^0_j
    \).

\end{enumerate}
In the case of unitary Steinberg groups we have
\(
    a * b = \phi(ab)
\),
\(
    u \circ v = \inv{\pi(u)} \pi(v)
\), and
\(
    u \triangleleft a = \rho(u) a
\).

Let us write some identities between operations appearing in \ref{st-a2-ob}--\ref{st-b2-ob}. We obtain them using lemma \ref{unip-subgr} in the following ways:
\begin{itemize}
    \item By applying two different parametrizations of \(G_\alpha\) with long \(\alpha\) the same commutator formula may be written in two different ways, so we obtain some identities involving two variables.
    \item By pairwise permuting the factors in the product
    \(
        G_\alpha G_\alpha G_\beta
    \) (or
    \(
        G_\beta G_\alpha G_\alpha
    \)) for linearly independent roots \(\alpha\) and \(\beta\) using commutator formulae in two ways (either applying it twice or multiplying the factors from \(G_\alpha\) and applying the commutator formula once). In such a way we obtain identities involving three variables with domains corresponding to \(\alpha\), \(\alpha\), and \(\beta\).
    \item By pairwise permuting the factors in the product
    \(
        G_\alpha G_\beta G_\gamma
    \) for pairwise linearly independent roots \(\alpha\), \(\beta\), \(\gamma\) using commutator formulae in two ways until the order on the original factors is reversed (starting from swapping the left two factors or the right ones). In such a way we obtain identities involving three variables with domains corresponding to \(\alpha\), \(\beta\), and \(\gamma\). The non-trivial identities appear only if one of \(\alpha\), \(\beta\), \(\gamma\) is a convex linear combination of other two or all three roots are linearly independent and there are roots in the open simplicial cone spanned by them.
\end{itemize}
The resulting identities we simplify using the previously obtained identities and sometimes replacing variables by their opposites. In the list below all indices are non-zero and with distinct absolute values.

\begin{enumerate}[label = {(R\arabic*)}]

    \item \label{mul-cen}
    \(
        ab + c = c + ab
    \) for
    \(
        a \in R_{ij}
    \),
    \(
        b \in R_{jk}
    \),
    \(
        c \in R_{ik}
    \);

    \item \label{mul-dis-l}
    \(
        (a + b)c = ac + bc
    \) for
    \(
        a, b \in R_{ij}
    \),
    \(
        c \in R_{jk}
    \);

    \item \label{mul-dis-r}
    \(
        a(b + c) = ab + ac
    \) for
    \(
        a \in R_{ij}
    \),
    \(
        b, c \in R_{jk}
    \);

    \item \label{inv-mul}
    \(
        \inv{ab} = \inv{\,b\,} \inv a
    \) for
    \(
        a \in R_{ij}
    \),
    \(
        b \in R_{jk}
    \);

    \item \label{str-cen}
    \(
        a * b \dotplus u = u \dotplus a * b
    \) for
    \(
        a \in R_{-i, j}
    \),
    \(
        b \in R_{ji}
    \),
    \(
        u \in \Delta^0_i
    \);

    \item \label{str-dis-l}
    \(
        (a + b) * c = a * c \dotplus b * c
    \) for
    \(
        a, b \in R_{-i, j}
    \),
    \(
        c \in R_{ji}
    \);

    \item \label{str-dis-r}
    \(
        a * (b + c) = a * b \dotplus a * c
    \) for
    \(
        a \in R_{-i, j}
    \),
    \(
        b, c \in R_{ji}
    \);

    \item \label{str-inv}
    \(
        a * b \dotplus \inv{\,b\,} * \inv a = \dot 0
    \) for
    \(
        a \in R_{-i, j}
    \),
    \(
        b \in R_{ji}
    \);

    \item \label{cir-cen}
    \(
        a + u \circ v = u \circ v + a
    \) for
    \(
        u \in \Delta^0_i
    \),
    \(
        v \in \Delta^0_j
    \),
    \(
        a \in R_{-i, j}
    \);

    \item \label{cir-dis-l}
    \(
        (u \dotplus v) \circ w
        = u \circ w + v \circ w
    \) for
    \(
        u, v \in \Delta^0_i
    \),
    \(
        w \in \Delta^0_j
    \);

    \item \label{cir-dis-r}
    \(
        u \circ (v \dotplus w)
        = u \circ v + u \circ w
    \) for
    \(
        u \in \Delta^0_i
    \),
    \(
        v, w \in \Delta^0_j
    \);

    \item \label{inv-cir}
    \(
        \inv{ u \circ v } = v \circ u
    \) for
    \(
        u \in \Delta^0_i
    \),
    \(
        v \in \Delta^0_j
    \);

    \item \label{cir-str-b1}
    \(
        u \circ (a * b)
        + u \triangleleft (-\inv a)
        + b
        = b
        + u \triangleleft (-\inv a)
    \) for
    \(
        u \in \Delta^0_i
    \),
    \(
        a \in R_{-j, -i}
    \),
    \(
        b \in R_{-i, j}
    \);

    \item \label{dot-cen}
    \(
        u \cdot a
        \dotplus v
        = v
        \dotplus u \cdot a
        \dotminus \inv a * (u \circ v)
    \) for
    \(
        u \in \Delta^0_i
    \),
    \(
        a \in R_{ij}
    \),
    \(
        v \in \Delta^0_j
    \);

    \item \label{dot-dis-l}
    \(
        (u \dotplus v) \cdot a
        = u \cdot a
        \dotplus v \cdot a
    \) for
    \(
        u, v \in \Delta^0_i
    \),
    \(
        a \in R_{ij}
    \);

    \item \label{tri-dis-l}
    \(
        (u \dotplus v) \triangleleft a
        = v \triangleleft a
        + u \circ (v \cdot (-a))
        + u \triangleleft a
    \) for
    \(
        u, v \in \Delta^0_i
    \),
    \(
        a \in R_{ij}
    \);

    \item \label{tri-dis-r}
    \(
        u \triangleleft (a + b)
        = u \triangleleft a
        + u \triangleleft b
    \) for
    \(
        u \in \Delta^0_i
    \),
    \(
        a, b \in R_{ij}
    \);

    \item \label{dot-dis-r}
    \(
        u \cdot (a + b)
        = u \cdot a
        \dotplus \inv{\,b\,} * (u \triangleleft a)
        \dotplus u \cdot b
    \) for
    \(
        u \in \Delta^0_i
    \),
    \(
        a, b \in R_{ij}
    \);

    \item \label{mul-ass}
    \(
        (ab)c = a(bc)
    \) for
    \(
        a \in R_{ij}
    \),
    \(
        b \in R_{jk}
    \),
    \(
        c \in R_{kl}
    \);

    \item \label{str-ass}
    \(
        ab * c = a * bc
    \) for
    \(
        a \in R_{-i, j}
    \),
    \(
        b \in R_{jk}
    \),
    \(
        c \in R_{ki}
    \);

    \item \label{cir-str-b2}
    \(
        u \circ (a * b) = 0
    \) for
    \(
        u \in \Delta^0_i
    \),
    \(
        a \in R_{-j, k}
    \),
    \(
        b \in R_{kj}
    \);

    \item \label{cir-dot}
    \(
        u \circ (v \cdot a) = (u \circ v) a
    \) for
    \(
        u \in \Delta^0_i
    \),
    \(
        v \in \Delta^0_j
    \),
    \(
        a \in R_{jk}
    \);

    \item \label{inv-tri}
    \(
        \inv a (u \triangleleft b)
        + (u \cdot a) \circ (u \cdot b)
        + \inv{ (u \triangleleft a) } b
        = 0
    \) for
    \(
        u \in \Delta^0_i
    \),
    \(
        a \in R_{ij}
    \),
    \(
        b \in R_{ik}
    \);

    \item \label{tri-str}
    \(
        (a * b) \triangleleft c
        = a(bc)
        - \inv{\, b \,} (\inv ac)
    \) for
    \(
        a \in R_{-i, j}
    \),
    \(
        b \in R_{ji}
    \),
    \(
        c \in R_{ik}
    \);

    \item \label{dot-str}
    \(
        (a * b) \cdot c = \inv{\,c\,} a * bc
    \) for
    \(
        a \in R_{-i, j}
    \),
    \(
        b \in R_{ji}
    \),
    \(
        c \in R_{ik}
    \);

    \item \label{tri-ass}
    \(
        (u \triangleleft a) b = u \triangleleft ab
    \) for
    \(
        u \in \Delta^0_i
    \),
    \(
        a \in R_{ij}
    \),
    \(
        b \in R_{jk}
    \);

    \item \label{tri-dot}
    \(
        (\dotminus u \cdot a) \triangleleft b
        = \inv{(u \triangleleft a)} (ab)
    \) for
    \(
        u \in \Delta^0_i
    \),
    \(
        a \in R_{ij}
    \),
    \(
        b \in R_{jk}
    \);

    \item \label{dot-ass}
    \(
        (u \cdot a) \cdot b = u \cdot ab
    \) for
    \(
        u \in \Delta^0_i
    \),
    \(
        a \in R_{ij}
    \),
    \(
        b \in R_{jk}
    \).

\end{enumerate}

We say that a family of abstract groups \(R_{ij}\) for distinct non-zero \(|i|\), \(|j|\) and
\(
    \Delta^0_i
\) for \(i \neq 0\) together with operations
\begin{align*}
    \inv{(-)} &\colon R_{ij} \to R_{-j, -i},
    &
    ({-}) \circ ({=})
    &\colon \Delta^0_i \times \Delta^0_j
    \to R_{-i, j},
    \\
    ({-})({=})
    &\colon R_{ij} \times R_{jk}
    \to R_{ik},
    &
    ({-}) \triangleleft ({=})
    &\colon \Delta^0_i \times R_{ij}
    \to R_{-i, j},
    \\
    ({-}) * ({=})
    &\colon R_{-i, j} \times R_{ji}
    \to \Delta^0_i,
    &
    ({-}) \cdot ({=})
    &\colon \Delta^0_i \times R_{ij}
    \to \Delta^0_j
\end{align*}
satisfying
\(
    \inv{a + b} = \inv{\,b\,} + \inv a
\),
\(
    \inv{\inv a} = a
\), and \ref{mul-cen}--\ref{dot-ass} is a \textit{partial graded odd form ring}. Consider two additional conditions, where the indices are non-zero and with distinct absolute values:
\begin{enumerate}[label = {(R\arabic*)}, start = 29]

    \item \label{long-idem} \(
        R_{ij} = \langle
            R_{ik} R_{kj},
            R_{i, -k} R_{-k, -j}
        \rangle
    \);

    \item \label{short-idem} \(
        \Delta^0_i = \langle
            \Delta^0_j \cdot R_{ji},
            \Delta^0_{-j} \cdot R_{-j, i},
            R_{-i, j} * R_{ji}
        \rangle
    \).

\end{enumerate}

\begin{lemma} \label{idem-par-ofr}
    If \(G\) is a group with subgroups \(G_\alpha\) satisfying all \ref{comm-rel}--\ref{short-gen}, then the associated partial graded odd form ring satisfies \ref{long-idem} and \ref{short-idem} (in particular, \(G\) is perfect). For any partial graded odd form ring these two conditions imply that \(R_{ij}\) is abelian and
    \(
        \Delta^0_i
    \) is \(2\)-step nilpotent with nilpotent filtration \(
        \mathcal D_i^{\mathrm{min}} = \langle
            R_{-i, j} * R_{ji}
        \mid
            j \notin \{ 0, i, -i \}
        \rangle
    \). Also,
    \(
        \mathcal D_i^{\mathrm{min}} \circ \Delta^0_j
        = 0
    \),
    \(
        \mathcal D_i^{\mathrm{min}} \cdot R_{ij}
        \subseteq \mathcal D_j^{\mathrm{min}}
    \), and
    \(
        \mathcal D^{\mathrm{min}}_i = \langle
            R_{-i, j} * R_{ji}
        \rangle
    \) for distinct non-zero \(|i|\), \(|j|\).
\end{lemma}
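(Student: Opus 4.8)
The plan is to deduce \ref{long-idem} and \ref{short-idem} from the geometric hypotheses \ref{comm-rel}--\ref{short-gen}, and then to obtain the structural statements formally from \ref{mul-cen}--\ref{long-idem}. Throughout I will use that $T_{ij}$, $T_i$ identify each $G_\alpha$ with one of the groups $R_{ij}$, $\Delta^0_i$; that by Lemma \ref{unip-subgr} an ordered product $\prod_{\alpha\in\Sigma}G_\alpha$ over a special closed $\Sigma$ with a right extreme order is a subgroup with unique factorisation; and that \ref{comm-rel} together with the Steinberg relations \ref{st-a2-ob}--\ref{st-b2-ob} tells one into which ``coordinate'' of such a product a given commutator falls.

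I would first derive \ref{short-idem} from \ref{short-gen}. Work inside $P=\langle G_{\mathrm e_i+\mathrm e_j},G_{\mathrm e_i-\mathrm e_j},G_{\mathrm e_i},G_{\mathrm e_j},G_{-\mathrm e_j}\rangle$ for distinct non-zero $|i|,|j|$. By \ref{comm-rel}, $G_{\mathrm e_i}$ is central in $P$; the only non-trivial commutator of $G_{\mathrm e_i+\mathrm e_j}$ and $G_{\mathrm e_i-\mathrm e_j}$ lies in $G_{\mathrm e_i}$ and is, via \ref{st-b2-ll}, the image of $R_{-i,j}*R_{ji}$; and conjugating $G_{\mathrm e_i\pm\mathrm e_j}$ by $G_{\pm\mathrm e_j}$ produces, via \ref{st-b2-ob}, precisely the $G_{\mathrm e_i}$-contributions coming from $\Delta^0_j\cdot R_{ji}$ and $\Delta^0_{-j}\cdot R_{-j,i}$ (besides contributions in $G_{\mathrm e_i\pm\mathrm e_j}$). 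Let $D\le G_{\mathrm e_i}$ be the image under $T_i$ of the right-hand side of \ref{short-idem}. Then $G_{\mathrm e_i+\mathrm e_j}G_{\mathrm e_i-\mathrm e_j}D$ is a subgroup (as $[G_{\mathrm e_i+\mathrm e_j},G_{\mathrm e_i-\mathrm e_j}]\le D$ and $D$ is central in $P$), it contains $G_{\mathrm e_i\pm\mathrm e_j}$, and by the computation just made it is normalised by $G_{\pm\mathrm e_j}$; hence \ref{short-gen} gives $G_{\mathrm e_i}\subseteq G_{\mathrm e_i+\mathrm e_j}G_{\mathrm e_i-\mathrm e_j}D$. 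Ordering $\{\mathrm e_i,\mathrm e_i-\mathrm e_j,\mathrm e_i+\mathrm e_j\}$ right-extremely with $\mathrm e_i$ smallest, the uniqueness of factorisation and the centrality of $D$ force $G_{\mathrm e_i}=D$, which is \ref{short-idem}.

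Next, \ref{long-idem}. From \ref{short-idem} I first extract two auxiliary ``idempotency'' facts: for distinct non-zero $|i|,|j|,|k|$, both $\Delta^0_j\circ\Delta^0_i$ and every $u\triangleleft a$ ($u\in\Delta^0_j$, $a\in R_{ji}$) lie in the subgroup $\langle R_{-j,k}R_{ki},R_{-j,-k}R_{-k,i}\rangle$ of $R_{-j,i}$. These follow by expanding $\Delta^0_i$ (respectively the first $\Delta^0_j$) via \ref{short-idem}, using biadditivity \ref{cir-dis-l}, \ref{cir-dis-r} of $\circ$, and carrying the generators through \ref{cir-dot}, \ref{cir-str-b2}, \ref{inv-cir}, \ref{inv-mul} for the first statement, and additionally through \ref{tri-dis-l}, \ref{tri-dot}, \ref{tri-str} for the second; the cross terms produced by \ref{tri-dis-l} are of the shape $\Delta^0_j\circ\Delta^0_i$ and are swallowed by the first fact. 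Then \ref{long-idem} follows from \ref{long-gen} by the rank~$3$ analogue of the argument for \ref{short-idem}. Let $A$ and $B$ be the two subgroups appearing in \ref{long-gen}, let $\Sigma$ be the special closure of the six roots generating $B$ --- it is those six roots together with $\mathrm e_i+\mathrm e_j$, which is not extreme in $\Sigma$ --- and order $\Sigma$ right-extremely with $\mathrm e_i+\mathrm e_j$ smallest, so each element of $U=\prod_{\alpha\in\Sigma}G_\alpha$ is uniquely $g_0h$ with $g_0\in G_{\mathrm e_i+\mathrm e_j}$ and $h$ in the product $H$ of the remaining factors. Let $C\le G_{\mathrm e_i+\mathrm e_j}$ be the image under $T_{-j,i}$ of the right-hand side of \ref{long-idem} (it indeed lies in $G_{\mathrm e_i+\mathrm e_j}$ by \ref{comm-rel} and \ref{st-a2-ob}). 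The hard part is to check that $C\cdot H$ is an $A$-invariant subgroup containing $B$: it is a subgroup because $G_{\mathrm e_i+\mathrm e_j}$ is central in $U$ and every $\mathrm e_i+\mathrm e_j$-component arising when a product of elements of $H$ is brought to canonical form comes from the commutators $[G_{\mathrm e_i+\mathrm e_k},G_{\mathrm e_j-\mathrm e_k}]$, $[G_{\mathrm e_i-\mathrm e_k},G_{\mathrm e_j+\mathrm e_k}]$, $[G_{\mathrm e_i},G_{\mathrm e_j}]$, all lying in $C$ (the last by the first auxiliary fact); and it is $A$-invariant because conjugating by the generators of $A$ creates an $\mathrm e_i+\mathrm e_j$-component only from $[G_{\mathrm e_i-\mathrm e_j},G_{\mathrm e_j}]$ and $[G_{\mathrm e_j-\mathrm e_i},G_{\mathrm e_i}]$, i.e.\ from $\Delta^0_j\triangleleft R_{ji}$ and $\Delta^0_i\triangleleft R_{ij}$, again lying in $C$ by the second auxiliary fact. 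Then \ref{long-gen} gives $G_{\mathrm e_i+\mathrm e_j}\subseteq C\cdot H$, and uniqueness of factorisation forces $G_{\mathrm e_i+\mathrm e_j}=C$, which is \ref{long-idem}. Finally, \ref{long-idem} and \ref{short-idem} present every $R_{ij}$ and $\Delta^0_i$, hence every $G_\alpha$, as generated by images of commutators under \ref{st-a2-ob}--\ref{st-b2-ob}, so $G_\alpha\le[G,G]$ and, with \ref{elementary}, $G$ is perfect.

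The remaining statements are formal consequences of \ref{mul-cen}--\ref{long-idem}. By \ref{long-idem}, $R_{ij}$ is generated by products $R_{ik}R_{kj}$, $R_{i,-k}R_{-k,j}$, each of which is central in $R_{ij}$ by \ref{mul-cen}; hence $R_{ij}$ is abelian. For $\Delta^0_i$: every element of every $R_{-i,m}*R_{mi}$ is central in $\Delta^0_i$ by \ref{str-cen}, so $\mathcal D_i^{\mathrm{min}}$ is a central subgroup; by \ref{short-idem}, $\Delta^0_i$ is generated by $\mathcal D_i^{\mathrm{min}}$ and elements $u\cdot a$ with $u\in\Delta^0_{\pm m}$, $a\in R_{\pm m,i}$, and by \ref{dot-cen} each such $u\cdot a$ commutes with every element of $\Delta^0_i$ modulo a single $*$-product, hence modulo $\mathcal D_i^{\mathrm{min}}$; therefore $\Delta^0_i/\mathcal D_i^{\mathrm{min}}$ is abelian and $\mathcal D_i^{\mathrm{min}}$ is a $2$-step nilpotent filtration. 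The identity $\mathcal D_i^{\mathrm{min}}\circ\Delta^0_j=0$ comes from \ref{cir-str-b2}, \ref{inv-cir} and biadditivity of $\circ$; the inclusion $\mathcal D_i^{\mathrm{min}}\cdot R_{ij}\subseteq\mathcal D_j^{\mathrm{min}}$ comes from \ref{dot-str}, \ref{tri-str}, \ref{dot-dis-r}, since a $*$-product acted on by an element of $R_{ij}$ is once more a $*$-product up to $*$-products; and $\mathcal D_i^{\mathrm{min}}=\langle R_{-i,j}*R_{ji}\rangle$ for any single $j$ with $|j|\ne|i|$ follows by reducing an arbitrary generator $R_{-i,k}*R_{ki}$ to $R_{-i,j}*R_{ji}$, using \ref{str-inv} when $|k|=|j|$ and expanding $R_{-i,k}$ via \ref{long-idem} and contracting via \ref{str-ass} and \ref{str-inv} when $|k|\notin\{|i|,|j|\}$.
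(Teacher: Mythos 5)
Your proposal follows the same route as the paper: derive \ref{short-idem} directly from \ref{short-gen}, derive \ref{long-idem} from \ref{long-gen} by showing that the $\circ$- and $\triangleleft$-type contributions to the long root subgroup are already contained in $\langle R_{ik}R_{kj},R_{i,-k}R_{-k,j}\rangle$ (your two ``auxiliary facts'' are exactly the paper's reductions via \ref{tri-dis-l}, \ref{tri-str}, \ref{tri-dot}, \ref{cir-dot}, \ref{cir-str-b2}, \ref{short-idem}), and then obtain the structural claims formally; your group-theoretic bookkeeping with lemma \ref{unip-subgr} just spells out what the paper compresses into ``\ref{long-gen} means that $R_{ij}$ is generated by\dots''. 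In substance this is correct.

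Two local points need patching. First, in the argument for \ref{short-idem} you assert that $G_{\mathrm e_i}$ is central in $P=\langle G_{\mathrm e_i+\mathrm e_j},G_{\mathrm e_i-\mathrm e_j},G_{\mathrm e_i},G_{\pm\mathrm e_j}\rangle$; this is false, since $[G_{\mathrm e_i},G_{\pm\mathrm e_j}]\leq G_{\mathrm e_i\pm\mathrm e_j}$ is nontrivial in general. What you actually use (and what is true) is only that elements of $G_{\mathrm e_i}$ commute with $G_{\mathrm e_i+\mathrm e_j}$ and $G_{\mathrm e_i-\mathrm e_j}$, so the argument survives once the claim is weakened accordingly. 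Second, in the last paragraph your citations do not cover the generators $a*b$ of $\mathcal D^{\mathrm{min}}_i$ whose middle index has absolute value $|j|$: for $a\in R_{-i,\pm j}$, $b\in R_{\pm j,i}$ the identity \ref{cir-str-b2} does not apply to $(a*b)\circ\Delta^0_j$ (one needs \ref{cir-str-b1} together with commutativity of $R_{-i,j}$, as in the paper), and \ref{dot-str} does not apply to $(a*b)\cdot R_{ij}$. The cleanest fix is the paper's ordering: first prove $\mathcal D^{\mathrm{min}}_i=\langle R_{-i,m}*R_{mi}\rangle$ for a single $m$ (your last step, which is fine), then choose $|m|\notin\{|i|,|j|\}$ when verifying $\mathcal D^{\mathrm{min}}_i\circ\Delta^0_j=0$ and $\mathcal D^{\mathrm{min}}_i\cdot R_{ij}\subseteq\mathcal D^{\mathrm{min}}_j$; alternatively, expand $a$ via \ref{long-idem} and contract with \ref{str-ass}. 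With these small repairs the proof is complete and coincides with the paper's.
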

\begin{proof}
    First of all, \ref{short-gen} implies \ref{short-idem}. The condition \ref{long-gen} means that \(R_{ij}\) is generated by
    \(
        R_{ik} R_{kj}
    \),
    \(
        R_{i, -k} R_{-k, j}
    \),
    \(
        \Delta^0_{-i} \circ \Delta^0_j
    \),
    \(
        \Delta^0_{-i} \triangleleft R_{-i, j}
    \), and
    \(
        \inv{ (\Delta^0_j \triangleleft R_{j, -i}) }
    \) for any
    \(
        k \notin \{ 0, i, -i, j, -j \}
    \). But
    \[
        \Delta^0_{-i} \triangleleft R_{-i, j}
        \subseteq \bigl\langle
            (
                \Delta^0_{\pm k} \cdot R_{\pm k, -i}
            ) \triangleleft R_{-i, j},
            (
                R_{i, k} * R_{k, -i}
            ) \triangleleft R_{-i, j},
            \Delta^0_{-i} \circ \Delta^0_j
        \bigr\rangle
        \subseteq \langle
            R_{i, \pm k} R_{\pm k, j},
            \Delta^0_{-i} \circ \Delta^0_j
        \rangle
    \]
    by \ref{tri-dis-l}, \ref{tri-str}, \ref{tri-dot}, \ref{short-idem}, so these generators may be omitted and similarly for
    \(
        \inv{ (\Delta^0_j \triangleleft R_{j, -i}) }
    \). The generators
    \(
        \Delta^0_{-i} \circ \Delta^0_j
    \) may be omitted by \ref{cir-dis-r}, \ref{cir-str-b2}, \ref{cir-dot}, and \ref{short-idem}.

    Now assume that
    \(
        (R_{ij}, \Delta^0_j)_{ij}
    \) is a partial graded odd form ring satisfying \ref{long-idem} and \ref{short-idem}. Then \ref{mul-cen} implies that \(R_{ij}\) is abelian. The claim
    \(
        \mathcal D^{\mathrm{min}}_i \circ \Delta^0_j
        = 0
    \)
    follows from \ref{cir-str-b1} and \ref{cir-str-b2}. The group
    \(
        \mathcal D^{\mathrm{min}}_i
    \) is central in
    \(
        \Delta^0_i
    \) by \ref{str-cen} and contains its commutator by \ref{str-cen}, \ref{dot-cen}, \ref{short-idem}. We have
    \(
        R_{-i, k} * R_{ki}
        \subseteq \langle R_{-i, j} R_{ji} \rangle
    \) for distinct non-zero \(|i|\), \(|j|\), \(|k|\) by \ref{str-dis-l}, \ref{str-inv}, \ref{str-ass}, \ref{long-idem}, so
    \(
        \mathcal D^{\mathrm{min}}_i
        = \langle R_{-i, j} * R_{ji} \rangle
    \) for any
    \(
        j \notin \{ 0, i, -i \}
    \). Finally,
    \(
        \mathcal D_i^{\mathrm{min}} \cdot R_{ij}
        \subseteq \mathcal D_j^{\mathrm{min}}
    \) by \ref{dot-dis-l} and \ref{dot-str}.
\end{proof}

Any graded odd form algebra without the augmentation and the components \(R_{0i}\), \(R_{i0}\),
\(
    \Delta^0_0
\) is a partial graded odd form ring and the idempotency conditions imply \ref{long-idem} and \ref{short-idem}. In theorem \ref{coord-sphere} we show that every partial graded odd form ring satisfying \ref{long-idem} and \ref{short-idem} for
\(
    \ell \geq 4
\) arises from a group with subgroups satisfying \ref{comm-rel}--\ref{short-gen}.

\section{Action of commutative ring}

Recall from \cite[\S 2.1]{thesis} and \cite[\S 4]{classic-ofa} that a \textit{\(2\)-step nilpotent \(K\)-module} is a pair \((M, M_0)\) such that
\begin{itemize}

    \item \(M\) is a group with the group operation \(\dotplus\) and
    \(
        M_0 \leq M
    \) is its nilpotent filtration (i.e.
    \(
        [M, M] \leq M_0
    \) and \(M_0\) is central);

    \item the multiplicative monoid
    \(
        K^\bullet
    \) acts on \(M\) from the right by group endomorphisms;

    \item \(M_0\) has a structure of a left \(K\)-module;

    \item
    \(
        [m \cdot k, m' \cdot k'] = kk' [m, m']
    \);

    \item
    \(
        m \cdot (k + k')
        = m \cdot k
        \dotplus kk' \tau(m)
        \dotplus m \cdot k'
    \) for some (uniquely determined) element
    \(
        \tau(m) \in M_0
    \);

    \item
    \(
        m \cdot k = k^2 m
    \) for
    \(
        m \in M_0
    \).

\end{itemize}
Clearly,
\(
    (\Delta^0_i, \mathcal D_i)
\) are \(2\)-step nilpotent \(K\)-modules with
\(
    \tau(u) = u \dotplus u \cdot (-1) = \phi(\rho(u))
\) for each graded odd form \(K\)-algebra
\(
    (R, \Delta, \mathcal D)
\).

Suppose that \(G\) is a group with a family of subgroups \(G_\alpha\) for
\(
    \alpha \in \Phi
\) satisfying \ref{comm-rel}--\ref{short-gen}, where \(\Phi\) is a root system of type
\(
    \mathsf B_\ell
\). Let
\[
    \widetilde \Phi
    = \Phi \cup \{
        \pm 2 \mathrm e_i \mid 1 \leq i \leq \ell
    \},
\]
it is a root system of type
\(
    \mathsf{BC}_\ell
\). Suppose that we have subgroups
\(
    G_{2 \alpha} \leq G_\alpha
\) for short
\(
    \alpha \in \Phi
\) satisfying the following additional conditions involving a commutative unital ring \(K\):
\begin{enumerate}[label = {(C\arabic*)}, start = 6]

    \item \label{bc-comm-rel}
    \(
        [G_\alpha, G_\beta] \leq \prod_{ \substack{
            i \alpha + j \beta \in \widetilde \Phi\\
            i, j \in \{ 1, 2, \ldots \}
        } } G_{i \alpha + j \beta}
    \) for all non-antiparallel
    \(
        \alpha, \beta \in \widetilde \Phi
    \).

    \item \label{k-act} The multiplicative monoid
    \(
        K^\bullet
    \) acts on \(G_\alpha\) from the right by
    \(
        (g, k) \mapsto g \cdot_\alpha k
    \), this is a \(K\)-module structure unless \(\alpha\) is short in \(\Phi\) and a \(2\)-step nilpotent \(K\)-module structure otherwise (with the nilpotent filtration
    \(
        G_{2 \alpha}
    \)).

    \item \label{homo-comm} If
    \(
        \alpha, \beta \in \widetilde \Phi
    \) are linearly independent,
    \(
        x \in G_\alpha
    \),
    \(
        y \in G_\beta
    \), and
    \(
        [x, y] = \prod_{ \substack{
            i \alpha + j \beta \in \widetilde \Phi\\
            i, j \in \{ 1, 2, \ldots \}
        } } z_{ij}
    \) for
    \(
        z_{ij} \in G_{i \alpha + j \beta}
    \), then
    \[
        [x \cdot_\alpha k_x, y \cdot_\beta k_y]
        = \prod_{ \substack{
            i \alpha + j \beta \in \widetilde \Phi\\
            i, j \in \{ 1, 2, \ldots \}
        } } z_{ij}
            \cdot_{i \alpha + j \beta} k_x^i k_y^j
    \]
    for all
    \(
        k_x, k_y \in K
    \).

\end{enumerate}
Here \ref{bc-comm-rel} is called \textit{\(\mathsf{BC}_\ell\)-commutator relations}, see \cite[definition 3.2]{st-jordan} for the case of arbitrary generalized root systems. This condition implies that \(G_\alpha\) is abelian for long
\(
    \alpha \in \Phi
\) and \(2\)-step nilpotent with the nilpotent filtration
\(
    G_{2 \alpha} \leq G_\alpha
\) for short
\(
    \alpha \in \Phi
\). The condition \ref{homo-comm} is independent of the choice of \(z_{ij}\) by \ref{k-act} and lemma \ref{unip-subgr}. Clearly, \ref{bc-comm-rel}--\ref{homo-comm} are satisfied by a Steinberg group of a graded odd form \(K\)-algebra.

The conditions \ref{bc-comm-rel}--\ref{homo-comm} imply that \(R_{ij}\) are \(K\)-modules with
\(
    T_{ij}(ak)
    = T_{ij}(a) \cdot_{\mathrm e_j - \mathrm e_i} k
\),
\(
    \Delta^0_i
\) are \(2\)-step nilpotent \(K\)-modules with the nilpotent filtration
\(
    \mathcal D_i \leq \Delta^0_i
\) (corresponding to
\(
    G_{2 \mathrm e_i}
\)),
\(
    T_i(u \cdot k) = T_i(u) \cdot_{\mathrm e_i} k
\), and
\(
    T_i(kv) = T_i(v) \cdot_{2 \mathrm e_i} k
\) for
\(
    v \in \mathcal D_i
\). Also,
\begin{enumerate}[label = {(R\arabic*)}, start = 31]

    \item \label{cir-fil}
    \(
        \mathcal D_i \circ \Delta^0_j = 0
    \);

    \item \label{fil-str}
    \(
        R_{-i, j} * R_{ji} \subseteq \mathcal D_i
    \);

    \item \label{dot-fil}
    \(
        \mathcal D_i \cdot R_{ij}
        \subseteq \mathcal D_j
    \);

    \item \label{inv-lin}
    \(
        \inv{ka} = k \inv a
    \) for
    \(
        a \in R_{ij}
    \);

    \item \label{mul-lin}
    \(
        (ka) b = k(ab) = a(kb)
    \) for
    \(
        a \in R_{ij}
    \),
    \(
        b \in R_{jk}
    \);

    \item \label{str-lin}
    \(
        ka * b = k(a * b) = a * kb
    \) for
    \(
        a \in R_{-i, j}
    \),
    \(
        b \in R_{ji}
    \);

    \item \label{cir-lin}
    \(
        (u \cdot k) \circ v
        = (u \circ v) k
        = u \circ (v \cdot k)
    \) for
    \(
        u \in \Delta^0_i
    \),
    \(
        v \in \Delta^0_j
    \);

    \item \label{tri-lin}
    \(
        (u \cdot k) \triangleleft a
        = k^2 (u \triangleleft a)
    \) and
    \(
        u \triangleleft ka = k(u \triangleleft a)
    \) for
    \(
        u \in \Delta^0_i
    \),
    \(
        a \in R_{ij}
    \);

    \item \label{tri-lin-fil}
    \(
        kv \triangleleft a = k(v \triangleleft a)
    \) for
    \(
        v \in \mathcal D_i
    \),
    \(
        a \in R_{ij}
    \);

    \item \label{dot-lin}
    \(
        (u \cdot k) \cdot a
        = u \cdot ka
        = (u \cdot a) \cdot k
    \) for
    \(
        u \in \Delta^0_i
    \),
    \(
        a \in R_{ij}
    \);

    \item \label{dot-lin-fil}
    \(
        kv \cdot a = k(v \cdot a)
    \) for
    \(
        v \in \mathcal D_i
    \),
    \(
        a \in R_{ij}
    \);

\end{enumerate}
where all indices are non-zero and with distinct absolute values. We say that an abstract partial graded odd form ring
\(
    (R_{ij}, \Delta^0_i)_{ij}
\) is a \textit{partial graded odd form \(K\)-algebra} if \(R_{ij}\) are \(K\)-modules,
\(
    \Delta^0_i
\) are \(2\)-step nilpotent \(K\)-modules, and \ref{cir-fil}--\ref{dot-lin-fil} hold.

It turns out that not every partial graded odd form algebra is a part of a graded odd form algebra. Let
\(
    (R_{ij}, \Delta^0_i)_{ij}
\) be a partial graded odd form ring satisfying \ref{long-idem} and \ref{short-idem}. If it completes to a graded odd form algebra, then the following \textit{associativity conditions} hold for various parentheses placements in the formal product
\(
    abcde
\) for
\(
    a \in R_{ij}
\),
\(
    b \in R_{jk}
\),
\(
    c \in R_{kl}
\),
\(
    d \in R_{lm}
\),
\(
    e \in R_{mn}
\):
\begin{enumerate}[label = {(A\arabic*)}]

    \item \label{ass-law-l}
    \(
        a((b(cd))e) = ((ab)c)(de)
    \), where
    \(
        |i| = |m|
    \),
    \(
        |j| = |l|
    \),
    \(
        |k| = |n|
    \) are distinct and non-zero;

    \item \label{ass-law-r}
    \(
        (ab)(c(de)) = (a((bc)d))e
    \), where
    \(
        |i| = |l|
    \),
    \(
        |j| = |n|
    \),
    \(
        |k| = |m|
    \) are distinct and non-zero;

    \item \label{ass-law-sym}
    \(
        ((a(bc))d)e = a(b((cd)e))
    \), where
    \(
        |i| = |k|
    \),
    \(
        |j| = |m|
    \),
    \(
        |l| = |n|
    \) are distinct and non-zero;

    \item \label{ass-law-bal}
    \(
        ((ab)(cd))e = a((bc)(de))
    \), where
    \(
        |i| = |l|
    \),
    \(
        |j| = |m|
    \),
    \(
        |k| = |n|
    \) are distinct and non-zero.

\end{enumerate}
It is easy to see that these are the only possible nontrivial generalized associative laws with \(5\) variables such that all partial products take values in \(R_{ij}\) for distinct non-zero \(|i|\) and \(|j|\). There are no such nontrivial laws with smaller number of variables.

\begin{lemma} \label{ass-cond}
    The conditions \ref{ass-law-l} and \ref{ass-law-r} are equivalent to each other and to the conjuction of \ref{ass-law-sym}, \ref{ass-law-bal}.
\end{lemma}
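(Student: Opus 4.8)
The plan is to settle the first equivalence by a symmetry argument and the second by relation‑chasing.

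\emph{The equivalence \ref{ass-law-l} $\Leftrightarrow$ \ref{ass-law-r}.} I would apply the involution to the identity of \ref{ass-law-l}. Using $\inv{xy} = \inv y\,\inv x$ (relation \ref{inv-mul}) and $\inv{\inv x} = x$ repeatedly, the image of the element $a((b(cd))e)$ is $\bigl(\inv e\,((\inv d\,\inv c)\,\inv b)\bigr)\,\inv a$ and the image of $((ab)c)(de)$ is $(\inv e\,\inv d)\bigl(\inv c(\inv b\,\inv a)\bigr)$. Putting $a' = \inv e \in R_{-n,-m}$, $b' = \inv d \in R_{-m,-l}$, $c' = \inv c \in R_{-l,-k}$, $d' = \inv b \in R_{-k,-j}$, $e' = \inv a \in R_{-j,-i}$, these two images are exactly the two sides of the identity of \ref{ass-law-r} for the tuple $(a',b',c',d',e')$, and the constraints $|i|=|m|$, $|j|=|l|$, $|k|=|n|$ turn into exactly the constraints required of $(-n,-m,-l,-k,-j,-i)$ in \ref{ass-law-r}. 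Since $\inv{(-)}\colon R_{ij}\to R_{-j,-i}$ is a bijection, $(a',b',c',d',e')$ runs over all admissible inputs of \ref{ass-law-r} as $(a,b,c,d,e)$ runs over those of \ref{ass-law-l}; hence \ref{ass-law-l} holds if and only if \ref{ass-law-r} holds.

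\emph{The equivalence \ref{ass-law-l} $\Leftrightarrow$ \ref{ass-law-sym} $\wedge$ \ref{ass-law-bal}.} Both implications I would verify by manipulating the defining relations, the key enabling fact being that, under the standing assumptions \ref{long-idem}--\ref{short-idem} (via lemma \ref{idem-par-ofr}), every element of $R_{ij}$ is a sum of products of the shapes occurring in \ref{long-idem} for any admissible intermediate index, and every element of $\Delta^0_i$ is a sum of the generators of \ref{short-idem}; moreover all structure maps are biadditive (in particular by \ref{mul-dis-l}, \ref{mul-dis-r}). Consequently each of the four associativity identities is additive in each of its five arguments, so it suffices to check it when every argument is one of these generators. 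After such a substitution a five‑variable identity becomes an identity in a longer formal product, and one transforms one side into the other by (i) generic associativity \ref{mul-ass}, applied wherever four consecutive indices have distinct absolute values, and (ii) the ``triple‑product'' relations \ref{str-ass}, \ref{tri-str}, \ref{tri-ass}, \ref{tri-dot}, \ref{dot-str}, \ref{dot-ass}, \ref{inv-tri}, which move products across the operations $*$, $\triangleleft$, $\cdot$ attached to the groups $\Delta^0_i$. These mixed relations are what supply the extra reassociation room that is unavailable at the level of pure products, and which is needed precisely when $\ell=3$. Carrying out this reduction for \ref{ass-law-sym} and for \ref{ass-law-bal} while invoking \ref{ass-law-l} (equivalently \ref{ass-law-r}) gives the forward implication, and running the same procedure the other way — decomposing the arguments of \ref{ass-law-l} and invoking \ref{ass-law-sym}, \ref{ass-law-bal} together with \ref{mul-ass} — gives the converse.

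\emph{The main obstacle.} The difficulty is entirely bookkeeping: at each step one must choose which index to split through in \ref{long-idem} (and which of its two summands to take) so that every bracketing and every $*$‑, $\triangleleft$‑ or $\cdot$‑expression that occurs is defined, and then check that the rewriting chains launched from the two sides of the target identity actually meet. This is a finite but lengthy case analysis, substantial only for $\ell=3$; for $\ell\ge 4$ a spare absolute value is always available and in fact all four conditions then hold outright by theorem~\ref{ass-forced}.
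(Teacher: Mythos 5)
Your involution argument for the equivalence of \ref{ass-law-l} and \ref{ass-law-r} is correct and is exactly the paper's (one-line) first step. The rest of the proposal, however, does not prove the equivalence with the conjunction of \ref{ass-law-sym} and \ref{ass-law-bal}: you describe a reduction scheme (decompose each argument via \ref{long-idem}, \ref{short-idem}, then rewrite) but never exhibit a single rewriting chain, and the assertion that the chains launched from the two sides ``actually meet'' is precisely the content that has to be established. The toolkit you name is also off target: the mixed relations \ref{str-ass}, \ref{tri-str}, \ref{tri-ass}, \ref{tri-dot}, \ref{dot-str}, \ref{dot-ass}, \ref{inv-tri} involving the groups \(\Delta^0_i\) play no role in this lemma (they are what makes the second, \(\ell=3\)-specific case of theorem \ref{ass-forced} work, which is a different statement), and \ref{mul-ass} is never applicable to the index patterns of \ref{ass-law-l}--\ref{ass-law-bal}, so it is unclear how your procedure would ever reach them. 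Appealing to theorem \ref{ass-forced} for \(\ell\ge 4\) would moreover be circular, since its proof begins by invoking the present lemma.

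What actually closes the gap is a short, purely multiplicative computation with six-fold products, using additivity and \ref{long-idem} to decompose exactly one argument. For \ref{ass-law-sym} \(\wedge\) \ref{ass-law-bal} \(\Rightarrow\) \ref{ass-law-l}, write the second argument of \ref{ass-law-l} as a sum of products \(bc\) through an index whose absolute value equals that of the first index and check
\[
a(((bc)(de))f) = a(b((cd)(ef))) = ((a(bc))d)(ef),
\]
where the first equality is an instance of \ref{ass-law-bal} and the second of \ref{ass-law-sym}; the two ends form the required instance of \ref{ass-law-l}. Conversely, assuming \ref{ass-law-l} and \ref{ass-law-r}, one first obtains \ref{ass-law-bal} (with its third argument decomposed) from \(((ab)((cd)e))f = ((ab)c)(d(ef)) = a((b(cd))(ef))\), using \ref{ass-law-r} and then \ref{ass-law-l}, and afterwards \ref{ass-law-sym} (with its first argument decomposed) from \((ab)(c((de)f)) = (a((bc)(de)))f = (((ab)(cd))e)f\), using \ref{ass-law-r} and the just-derived \ref{ass-law-bal}. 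Without chains of this kind (or an equivalent explicit computation), your second part remains a plan rather than a proof.
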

\begin{proof}
    Since the involution swaps \ref{ass-law-l} and \ref{ass-law-r}, they are equivalent. Consider parentheses placements in the formal product
    \(
        abcdef
    \) for
    \(
        a \in R_{ij}
    \),
    \(
        b \in R_{jk}
    \),
    \(
        c \in R_{kl}
    \),
    \(
        d \in R_{km}
    \),
    \(
        e \in R_{mn}
    \),
    \(
        f \in R_{no}
    \). The conditions \ref{ass-law-sym} and \ref{ass-law-bal} imply \ref{ass-law-l} since
    \[
        a(((bc)(de))f)
        = a(b((cd)(ef)))
        = ((a(bc))d)(ef)
    \]
    for distinct non-zero
    \(
        |i| = |k| = |n|
    \),
    \(
        |j| = |m|
    \),
    \(
        |l| = |o|
    \).

    Conversely, assume \ref{ass-law-l} and \ref{ass-law-r}. Then \ref{ass-law-bal} follows from
    \[
        ((ab)((cd)e))f
        = ((ab)c)(d(ef))
        = a((b(cd))(ef))
    \]
    for distinct non-zero
    \(
        |i| = |m|
    \),
    \(
        |j| = |l| = |n|
    \),
    \(
        |k| = |o|
    \). The conditions \ref{ass-law-sym} now follows from
    \[
        (ab)(c((de)f))
        = (a((bc)(de)))f
        = (((ab)(cd))e)f
    \]
    for distinct non-zero
    \(
        |i| = |l|
    \),
    \(
        |j| = |m| = |o|
    \),
    \(
        |k| = |n|
    \).
\end{proof}

\begin{theorem} \label{ass-forced}
    The associativity conditions hold if
    \(
        \ell \geq 4
    \) or
    \(
        R_{ij} = \langle
            \Delta^0_{-i} \circ \Delta^0_j
        \rangle
    \) for distinct non-zero \(|i|\) and \(|j|\).
\end{theorem}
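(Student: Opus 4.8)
The plan is first to invoke Lemma~\ref{ass-cond}, reducing to any one of the four conditions — say \ref{ass-law-l}: for $a \in R_{ij}$, $b \in R_{jk}$, $c \in R_{kl}$, $d \in R_{lm}$, $e \in R_{mn}$ with $|i| = |m|$, $|j| = |l|$, $|k| = |n|$ pairwise distinct and non-zero, prove $a((b(cd))e) = ((ab)c)(de)$. Both sides of this equation are group homomorphisms in each of the five arguments separately, being iterated compositions of the biadditive multiplication (one applies \ref{mul-dis-l} and \ref{mul-dis-r}), so it suffices to verify the identity when a single chosen argument ranges over a generating set of its group. The conceptual point is that the associative law \ref{mul-ass} for the multiplication is postulated only when \emph{four} index absolute values are distinct, hence is unavailable inside a rank-three configuration like the one in \ref{ass-law-l}; each of the two hypotheses of the theorem provides a way around this.

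For $\ell \ge 4$ the plan is to introduce a fresh index. Since only $|i|, |j|, |k|$ occur, choose $p \ne 0$ with $|p| \notin \{|i|, |j|, |k|\}$. By \ref{long-idem} the group $R_{kl}$ is generated by products $c'c''$ with $c' \in R_{kp'}$, $c'' \in R_{p'l}$ and $|p'| = |p|$, so by the homomorphism remark we may assume $c = c'c''$. Then \ref{ass-law-l} becomes an identity between two parenthesizations of the six-fold product $abc'c''de$, whose sequence of boundary index absolute values reads $|i|, |j|, |k|, |p|, |j|, |i|, |k|$. The crucial observation is that the inserted index $p$ breaks all three ``diagonal'' coincidences simultaneously, so every reassociation needed to transport either side to the common form $((ab)c')(c''(de))$ acts on a triple whose four bounding indices have pairwise distinct absolute values $\{|i|, |j|, |k|, |p|\}$ and is therefore a legitimate instance of \ref{mul-ass}. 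Writing out these short reassociation chains and checking the index bookkeeping at each step completes the case.

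For $R_{ij} = \langle \Delta^0_{-i} \circ \Delta^0_j \rangle$ (the hypothesis that matters when $\ell = 3$) the plan is to collapse everything into one circle bracket. By the homomorphism remark, reduce to $a = u \circ v$ with $u \in \Delta^0_{-i}$, $v \in \Delta^0_j$. Since $a$ is the leftmost factor of both sides of \ref{ass-law-l}, repeated use of \ref{cir-dot} pushes $b$, $c$, $d$, $e$ inside the bracket and rewrites the identity as $v \cdot ((b(cd))e) = ((v \cdot b) \cdot c) \cdot (de)$ in $\Delta^0_n$ — once again a generalized associativity statement, but now for the module action $\cdot$, whose defining law \ref{dot-ass} involves only \emph{three} indices at a time. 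As only the three absolute values $|i|, |j|, |k|$ occur, every needed instance of \ref{dot-ass} is legitimate, and both sides collapse to $(((v \cdot b) \cdot c) \cdot d) \cdot e$. I expect the main difficulty to be purely organizational: applying the reduction to generators to the correct argument in each case, and, for $\ell \ge 4$, confirming that the reassociation path never strays outside the legal moves — equivalently, that the new index simultaneously separates the pairs $(i,m)$, $(j,l)$, $(k,n)$. Finally Lemma~\ref{ass-cond} promotes \ref{ass-law-l} to all of \ref{ass-law-l}--\ref{ass-law-bal}.
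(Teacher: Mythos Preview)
Your proposal is correct and follows essentially the same approach as the paper: reduce to \ref{ass-law-l} via Lemma~\ref{ass-cond}, then in each case decompose one argument of the five-fold product so that only legitimate instances of \ref{mul-ass} (resp.\ \ref{cir-dot} and \ref{dot-ass}) are needed. The only cosmetic difference is that for the hypothesis $R_{ij} = \langle \Delta^0_{-i} \circ \Delta^0_j \rangle$ you decompose the leftmost factor $a$ and work entirely with \ref{cir-dot} and \ref{dot-ass}, whereas the paper decomposes the middle factor $c$ and also uses \ref{inv-cir} and \ref{inv-mul} to flip the circle bracket; your choice is arguably cleaner since it avoids the involution identities, but the two arguments are equally short and use the same mechanism.
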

\begin{proof}
    By lemma \ref{ass-cond} it suffices to prove \ref{ass-law-l}. Let
    \(
        a \in R_{ij}
    \),
    \(
        b \in R_{jk}
    \),
    \(
        c \in R_{kl}
    \),
    \(
        d \in R_{lm}
    \),
    \(
        e \in R_{mn}
    \) for distinct non-zero
    \(
        |i| = |m|
    \),
    \(
        |j| = |l|
    \),
    \(
        |k| = |n|
    \). In the case
    \(
        \ell \geq 4
    \) choose a non-zero index \(o\) distinct from all of them. Since \ref{ass-law-l} is polyadditive on its variables, we may assume that \(c = xy\) for
    \(
        x \in R_{ko}
    \) and
    \(
        y \in R_{ol}
    \). Then
    \begin{align*}
        a((b(cd))e)
        &= a((b((xy)d))e)
        = a((b(x(yd)))e)
        = a(((bx)(yd))e)
        = a((bx)((yd)e))
        \\
        &= (a(bx))(y(de))
        = (((ab)x)y)(de)
        = ((ab)(xy))(de)
        = ((ab)c)(de)
    \end{align*} by \ref{mul-ass}.

    If \(\ell = 3\) and
    \(
        R_{ij} = \langle
            \Delta^0_{-i} \circ \Delta^0_j
        \rangle
    \) for non-zero distinct \(|i|\) and \(|j|\), then we may assume that
    \(
        c = u \circ v
    \) for
    \(
        u \in \Delta^0_{-k}
    \) and
    \(
        v \in \Delta^0_l
    \). Then
    \begin{align*}
        a((b(cd))e)
        &= a((b((u \circ v)d))e)
        = a((b(u \circ (v \cdot d)))e)
        = a(((u \cdot \inv{\,b\,}) \circ (v \cdot d))e)
        = a((u \cdot \inv{\,b\,})
            \circ ((v \cdot d) \cdot e))
        \\
        &= ((u \cdot \inv{\,b\,}) \cdot \inv a)
            \circ (v \cdot de)
        = ((u \cdot \inv{(ab)}) \circ v)(de)
        = ((ab)(u \circ v))(de)
        = ((ab)c)(de)
    \end{align*} by \ref{inv-mul}, \ref{inv-cir}, \ref{cir-dot}, and \ref{dot-ass}.
\end{proof}

Now we give examples showing the necessity of the associativity conditions for \(\ell = 3\).

\begin{example}
    Let
    \(
        L = \mathfrak{sl}(3, \mathbb O)
    \) be the simple real Lie algebra constructed in \cite[theorem 5.1]{magic-square}. It is naturally graded by
    \(
        \mathsf A_3
    \) with the non-diagonal components
    \(
        e_{ij} \mathbb O
    \) for distinct
    \(
        1 \leq i, j \leq 3
    \) such that
    \begin{align*}
        [e_{ij} x, e_{jk} y] &= e_{ik} xy
        \text{ for distinct } i, j, k > 0;
        &
        [e_{ij} x, e_{kl} y] &= 0
        \text{ for } j \neq k \text{ and } i \neq l.
    \end{align*}
    It follows that
    \(
        R_{ij} = e_{ij} \mathbb O
    \) for \(ij > 0\),
    \(
        R_{ij} = 0
    \) for \(ij < 0\),
    \(
        \Delta^0_i = \dot 0
    \) are components of a partial graded odd form \(\mathbb R\)-algebra, where
    \begin{align*}
        \inv{(e_{ij} x)} &= e_{-j, -i} \inv x,
        &
        (e_{ij} x) (e_{jk} y) &= e_{ik} xy.
    \end{align*}
    Moreover, there is a corresponding group
    \(
        G = \Aut^\circ(L)
    \) with subgroups satisfying \ref{comm-rel}--\ref{homo-comm},
    \(
        T_{ij}(e_{ij} x)
        = \exp(\mathrm{ad}_{e_{ij} x})
    \) for \(i, j > 0\).
\end{example}

\begin{example}
    Let
    \(
        L = \mathfrak{sp}(6, \mathbb O)
    \) be the simple real Lie algebra constructed in \cite[theorem 5.1]{magic-square} (we choose the split symplectic form with the matrix
    \(
        J = \sum_i \eps_i e_{-i, i}
    \), where
    \(
        \eps_i = 1
    \) for \(i > 0\) and
    \(
        \eps_i = -1
    \) for \(i < 0\)). It is naturally graded by
    \(
        \mathsf C_3
    \) with the non-diagonal components
    \(
        \{
            e_{ij} x
            - \eps_i \eps_j e_{-j, -i} \inv x
        \mid
            x \in \mathbb O
        \}
    \) for distinct non-zero \(i\) and \(j\) (possibly \(i = -j\), in this case we may consider only
    \(
        x \in \mathbb R
    \)). Here
    \begin{align*}
        [
            e_{ij} x
            - \eps_i \eps_j e_{-j, -i} \inv x,
            e_{kl} y
            - \eps_k \eps_l e_{-l, -k} \inv y
        ]
        &= 0
        \text{ for } j \neq k \neq -i \neq -l \neq j,
        \\
        [
            e_{ij} x
            - \eps_i \eps_j e_{-j, -i} \inv x,
            e_{jk} y
            - \eps_j \eps_k e_{-k, -j} \inv y
        ]
        &= e_{ik} xy
        - \eps_i \eps_k e_{-k, -i} \inv{(xy)}
        \text{ for } k \neq i.
    \end{align*}

    It follows that
    \(
        R_{ij} = e_{ij} \mathbb O
    \) for distinct non-zero \(i\), \(j\) and
    \(
        \Delta^0_i
        = \mathcal D_i
        = e_{-i, i} \mathbb R
    \) are components a partial graded odd form \(\mathbb R\)-algebra, where
    \begin{align*}
        \inv{(e_{ij} x)}
        &= \eps_i \eps_j e_{-j, -i} \inv x,
        &
        e_{-i, i} a \circ e_{-j, j} b &= 0,
        \\
        (e_{ij} x) (e_{jk} y) &= e_{ik} xy
        \text{ for } i \neq \pm k,
        &
        e_{-i, i} a \triangleleft e_{ij} x
        &= e_{-i, j} a^2 x,
        \\
        e_{-i, j} x * e_{ji} y
        &= e_{-i, i} (xy + \inv{(xy)}),
        &
        e_{-i, i} a \cdot e_{ij} x
        &= \eps_i \eps_j e_{-j, j} \inv x a x.
    \end{align*}
    There exists a corresponding group
    \(
        G = \Aut^\circ(L)
    \) with subgroups satisfying \ref{comm-rel}--\ref{homo-comm},
    \(
        T_{ij}(
            e_{ij} x
            - \eps_i \eps_j e_{-j, -i} \inv x
        )
        = \exp(\mathrm{ad}_{
            e_{ij} x
            - \eps_i \eps_j e_{-j, -i} \inv x
        })
    \),
    \(
        T_j(e_{-j, j} a)
        = \exp(\mathrm{ad}_{e_{-j, j} a})
    \).
\end{example}

\begin{example}
    Let
    \(
        G = \glin(4, D)
    \) with root elements \(t_{ij}(a)\) for some associative unital ring \(D\). Its root subgroups are parameterized by a root system of type
    \(
        \mathsf A_3
    \). Since
    \(
        \mathsf A_3 = \mathsf D_3
    \), there is a corresponding partial graded odd form \(\mathbb Z\)-algebra with \(\ell = 3\),
    \(
        R_{ij} = e_{ij} D
    \),
    \(
        \Delta^0_i = \dot 0
    \) such that
    \begin{align*}
        T_{ij}(e_{ij} a)
        = T_{-j, -i}(e_{-j, -i} a)
        &= t_{ij}(a)
        \text{ for } 1 \leq i, j \leq 3;
        &
        \inv{ (e_{ij} a) } &= -e_{-j, -i} a;
        \\
        T_{-i, j}(e_{-i, j} a) &= t_{k4}(a)
        \text{ for } \{ i, j, k \} = \{ 1, 2, 3 \};
        &
        (e_{ij} a) (e_{jk} b) &= e_{ik} ab
        \text{ for } ijk > 0;
        \\
        T_{i, -j}(e_{i, -j} a) &= t_{4k}(a)
        \text{ for } \{ i, j, k \} = \{ 1, 2, 3 \};
        &
        (e_{ij} a) (e_{jk} b) &= -e_{ik} ba
        \text{ for } ijk < 0.
    \end{align*}
    It satisfies the associativity conditions if and only if \(D\) is commutative. Namely, the instance of \ref{ass-law-l} for
    \(
        a \in R_{1, -2}
    \),
    \(
        b \in R_{-2, 3}
    \),
    \(
        c \in R_{32}
    \),
    \(
        d \in R_{21}
    \),
    \(
        e \in R_{13}
    \) implies that
    \(
        vzuyx = yxzuv
    \) for any
    \(
        x, y, z, u, v \in D
    \), i.e. \(D\) is commutative.
\end{example}

\section{Construction of the involution algebra}

In this and the next sections
\(
    (R_{ij}, \Delta^0_i)_{ij}
\) is a partial graded odd form \(K\)-algebra satisfying \ref{long-idem}, \ref{short-idem}, and the associativity conditions.

Let
\(
    R_{0i} = \Delta^0_i / \mathcal D_i
\) for each \(i \neq 0\), it is a \(K\)-module. The projection map
\(
    \Delta^0_i \to R_{0i}
\) is denoted by \(\pi\). Let
\(
    \inv{(-)} \colon R_{i0} \to R_{0, -i}
\) be arbitrary \(K\)-module isomorphisms for \(i \neq 0\) and some \(K\)-modules \(R_{i0}\) with inverse homomorphisms denoted again by \(\inv{(-)}\). It follows that there are unique maps
\begin{align*}
    ({-})({=})
    &\colon R_{0i} \times R_{ij}
    \to R_{0j},
    &
    ({-})({=})
    &\colon R_{ji} \times R_{i0}
    \to R_{j0},
    \\
    ({-})({=})
    &\colon R_{i0} \times R_{0j}
    \to R_{ij},
    &
    ({-}) * ({=})
    &\colon R_{-i, 0} \times R_{0i}
    \to \Delta^0_i
\end{align*}
for distinct non-zero \(|i|\), \(|j|\) such that the following identities hold, where all indices are with distinct absolute values (but possible zero unless otherwise stated):
\begin{enumerate}[label = {(Z\arabic*)}]

    \item \label{pi-lin}
    \(
        \pi(u \dotplus v) = \pi(u) + \pi(v)
    \),
    \(
        \pi(u \cdot k) = \pi(u) k
    \) for
    \(
        u, v \in \Delta^0_i
    \), \(k \in K\), \(i \neq 0\);

    \item \label{pi-aug}
    \(
        \pi(\mathcal D_i) = 0
    \) for \(i \neq 0\);

    \item \label{pi-dot}
    \(
        \pi(u) a = \pi(u \cdot a)
    \) for
    \(
        u \in \Delta^0_i
    \),
    \(
        a \in R_{ij}
    \), \(i, j \neq 0\) (the left hand side is well-defined by this equation, see \ref{dot-dis-l} and \ref{dot-fil});

    \item \label{mul-pi}
    \(
        \inv{\pi(u)} \pi(v) = u \circ v
    \) for
    \(
        u \in \Delta^0_i
    \),
    \(
        v \in \Delta^0_j
    \), \(i, j \neq 0\) (the left hand side is well-defined by this equation, see \ref{cir-dis-l}, \ref{cir-dis-r}, \ref{inv-cir}, \ref{cir-fil});

    \item \label{inv-mul-2}
    \(
        \inv{(ab)} = \inv{\,b\,} \inv a
    \) for
    \(
        a \in R_{ij}
    \),
    \(
        b \in R_{jk}
    \) (this is a definition for \(|k| = 0\) and a corollary of \ref{inv-cir} for \(|j| = 0\));

    \item \label{mul-lin-2}
    \(
        (a, b) \mapsto ab
    \) is \(K\)-bilinear for
    \(
        a \in R_{ij}
    \),
    \(
        b \in R_{jk}
    \) (by \ref{cir-dis-l}, \ref{cir-dis-r}, \ref{dot-dis-l}, \ref{dot-dis-r}, \ref{cir-lin}, \ref{dot-lin});

    \item \label{mul-ass-2}
    \(
        (ab)c = a(bc)
    \) for
    \(
        a \in R_{ij}
    \),
    \(
        b \in R_{jk}
    \),
    \(
        c \in R_{kl}
    \) (by \ref{cir-dot}, \ref{dot-ass});

    \item \label{half-idem}
    \(
        R_{0i} = R_{0j} R_{ji} + R_{0, -j} R_{-j, i}
    \) for non-zero \(i\) and \(j\) with \(|i| \neq |j|\) (by \ref{short-idem});

    \item \label{str-pi}
    \(
        \inv{\pi(u)} * \pi(v)
        = v \dotplus u \dotminus v \dotminus u
    \) for
    \(
        u, v \in \Delta^0_i
    \), \(i \neq 0\) (by definition);

    \item \label{str-lin-2}
    \(
        (a, b) \mapsto a * b \in \mathcal D_i
    \) for
    \(
        a \in R_{-i, j}
    \),
    \(
        b \in R_{ji}
    \), \(i \neq 0\) is \(K\)-bilinear;

    \item \label{str-inv-2}
    \(
        a * b \dotplus \inv{\,b\,} * \inv a = \dot 0
    \) for
    \(
        a \in R_{-i, j}
    \),
    \(
        b \in R_{ji}
    \), \(i \neq 0\);

    \item \label{str-inv-sq}
    \(
        \inv a * a = \dot 0
    \) for
    \(
        a \in R_{0i}
    \), \(i \neq 0\);

    \item \label{str-ass-2}
    \(
        ab * c = a * bc
    \) for
    \(
        a \in R_{-i, j}
    \),
    \(
        b \in R_{jk}
    \),
    \(
        c \in R_{ki}
    \), \(i \neq 0\) (by \ref{dot-cen}).

\end{enumerate}

Now we may eliminate the operation
\(
    ({-}) \circ ({=})
\) from \ref{mul-cen}--\ref{dot-ass}. Here the indices are non-zero and with distinct absolute values:

\begin{enumerate}[label = {(Z\arabic*)}, start = 13]

    \item \label{tri-dis-l-2}
    \(
        (u \dotplus v) \triangleleft a
        = u \triangleleft a
        - \inv{ \pi(u) } (\pi(v) a)
        + v \triangleleft a
    \) for
    \(
        u, v \in \Delta^0_i
    \),
    \(
        a \in R_{ij}
    \);

    \item \label{inv-tri-2}
    \(
        \inv a (u \triangleleft b)
        + \inv{ (\pi(u) a) } (\pi(u) b)
        + \inv{(u \triangleleft a)} b
        = 0
    \) for
    \(
        u \in \Delta^0_i
    \),
    \(
        a \in R_{ij}
    \),
    \(
        b \in R_{ik}
    \).

\end{enumerate}

We construct \(R_{ij}\) for \(|i| = |j|\) as abstract \(K\)-modules using generators and relations. The generators are formal products \(ab\) for
\(
    a \in R_{ik}
\),
\(
    b \in R_{kj}
\), and \(
    k \notin \{ i, -i \}
\). The relations are
\begin{itemize}

    \item
    \(
        (a, b) \mapsto ab
    \) are bilinear;

    \item
    \(
        (ab)c = a(bc)
    \),
    \(
        a \in R_{ik}
    \),
    \(
        b \in R_{kl}
    \),
    \(
        c \in R_{lj}
    \),
    \(
        k, l \notin \{ i, -i \}
    \), \(|k| \neq |l|\).

\end{itemize}
For \(R_{-i, i}\), \(i \neq 0\), we add extra generators \(\rho(u)\) for
\(
    u \in \Delta^0_i
\) and extra relations
\begin{itemize}

    \item \(
        \rho(u \dotplus v)
        = \rho(u) - \inv{\pi(u)} \pi(v) + \rho(v)
    \) for
    \(
        u, v \in \Delta^0_i
    \);

    \item
    \(
        \rho(u \cdot k) = \rho(u) k^2
    \) and
    \(
        \rho(kv) = \rho(v) k
    \) for
    \(
        u \in \Delta^0_i
    \),
    \(
        v \in \mathcal D_i
    \), \(k \in K\);

    \item
    \(
        \rho(u \cdot a)
        = \inv a (u \triangleleft a)
    \) for
    \(
        u \in \Delta^0_j
    \),
    \(
        a \in R_{ji}
    \),
    \(
        j \notin \{ 0, i, -i \}
    \);

    \item
    \(
        \rho(a * b) = ab - \inv{\,b\,} \inv a
    \) for
    \(
        a \in R_{-i, j}
    \),
    \(
        b \in R_{ji}
    \),
    \(
        j \notin \{ i, -i \}
    \).

\end{itemize}

The next lemma is usually applied to the groups \(R_{ij}\),
\(
    R_{ik} \oplus R_{i, -k}
\),
\(
    R_{kj} \oplus R_{k, -j}
\), and
\(
    R_{kl}
    \oplus R_{-k, l}
    \oplus R_{k, -l}
    \oplus R_{-k, -l}
\) for \(k, l \neq 0\).

\begin{lemma} \label{mul-cons}
    Let \(A_{ij}\) be abelian groups for
    \(
        0 \leq i < j \leq 4
    \) unless
    \(
        (i, j) = (1, 3)
    \) and
    \(
        \mu_{ijk}
        \colon A_{ij} \times A_{jk}
        \to A_{ik}
    \) be biadditive maps for
    \(
        (i, j, k) \in \{
            (0, 1, 2),
            (0, 2, 3),
            (0, 3, 4),
            (0, 1, 4),
            (1, 2, 4),
            (2, 3, 4)
        \}
    \). Suppose that
    \(
        A_{02} = \langle
            \mu_{02}(A_{01}, A_{12})
        \rangle
    \),
    \(
        A_{24} = \langle
            \mu_{24}(A_{23}, A_{34})
        \rangle
    \), and
    \(
        \mu_{014}(a, \mu_{124}(b, \mu_{234}(c, d)))
        = \mu_{034}(\mu_{023}(\mu_{012}(a, b), c), d)
    \). Then there is unique biadditive map
    \(
        \mu_{024}
        \colon A_{02} \times A_{24}
        \to A_{024}
    \) such that
    \(
        \mu_{024}(\mu_{012}(a, b), c)
        = \mu_{014}(a, \mu_{124}(b, c))
    \) and
    \(
        \mu_{034}(\mu_{023}(a, b), c)
        = \mu_{024}(a, \mu_{234}(b, c))
    \).
\end{lemma}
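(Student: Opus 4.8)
The plan is to construct $\mu_{024}$ from the ``outer'' triple product and then push it down using the two generation hypotheses. First I would introduce the auxiliary triadditive map
\[
    h \colon A_{02} \times A_{23} \times A_{34} \to A_{04},
    \qquad
    h(x, c, d) = \mu_{034}\bigl(\mu_{023}(x, c), d\bigr),
\]
which is well defined and additive in each variable with no further input. The heart of the argument is the claim that for every fixed $x \in A_{02}$ the biadditive map $h(x, {-}, {-}) \colon A_{23} \times A_{34} \to A_{04}$ factors through $\mu_{234}$. To prove this I would fix a finite family $(c_k, d_k)$ with $\sum_k \mu_{234}(c_k, d_k) = 0$ and observe that the set of $x \in A_{02}$ with $\sum_k h(x, c_k, d_k) = 0$ is a subgroup, since $h$ is additive in $x$; hence it is enough to check $x = \mu_{012}(a, b)$, and there the pentagon hypothesis gives
\[
    \sum_k h\bigl(\mu_{012}(a, b), c_k, d_k\bigr)
    = \sum_k \mu_{014}\bigl(a, \mu_{124}(b, \mu_{234}(c_k, d_k))\bigr)
    = \mu_{014}\bigl(a, \mu_{124}(b, 0)\bigr)
    = 0
\]
by biadditivity of $\mu_{124}$ and $\mu_{014}$. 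Since $A_{02}$ is generated by the elements $\mu_{012}(a, b)$, the claim follows, and by the routine observation that a biadditive map $A_{23} \times A_{34} \to A_{04}$ vanishing on every kernel relation of $\mu_{234}$ factors uniquely through $\mu_{234}$, we obtain for each $x$ a homomorphism $\mu_{024}(x, {-}) \colon A_{24} \to A_{04}$ with $\mu_{024}\bigl(x, \mu_{234}(c, d)\bigr) = h(x, c, d)$.

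It then remains to check that $\mu_{024}$ is biadditive and satisfies the two required identities. Additivity in the first variable is checked on the generators $\mu_{234}(c, d)$ of $A_{24}$, where it reduces to additivity of $h$ in $x$. The identity $\mu_{034}\bigl(\mu_{023}(a, b), c\bigr) = \mu_{024}\bigl(a, \mu_{234}(b, c)\bigr)$ is precisely the defining property of $\mu_{024}$. For the identity $\mu_{024}\bigl(\mu_{012}(a, b), c\bigr) = \mu_{014}\bigl(a, \mu_{124}(b, c)\bigr)$, both sides are additive in $c$ and $A_{24}$ is generated by the $\mu_{234}(c', d')$, so it suffices to take $c = \mu_{234}(c', d')$, and then it is again an instance of the pentagon hypothesis. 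Uniqueness is immediate: any biadditive $\mu_{024}$ satisfying the first identity is pinned down on the pairs $\bigl(\mu_{012}(a, b), c\bigr)$, hence on all of $A_{02} \times A_{24}$ by additivity in the first slot together with $A_{02} = \langle \mu_{012}(A_{01}, A_{12})\rangle$.

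I do not anticipate a real obstacle; the only subtlety is getting the two reductions in the right order — the generation of $A_{02}$ is what lets $h$ descend through $\mu_{234}$ in its last two variables, while the generation of $A_{24}$ is what is needed to promote additivity to the first variable and to reduce the $\mu_{012}$-identity to generators. No relations of $A_{02}$ or $A_{24}$ beyond those implied by the generation hypotheses ever enter the argument.
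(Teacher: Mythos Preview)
Your proposal is correct and is essentially the paper's argument, just organized a little differently. The paper picks decompositions of both $a\in A_{02}$ and $b\in A_{24}$ simultaneously and verifies in a single chain of equalities that $\sum_s x_s(y_s b)=\sum_t (az_t)w_t$; you do the same well-definedness check in two asymmetric steps (first descend through $\mu_{234}$ using generation of $A_{02}$, then establish additivity in the first slot using generation of $A_{24}$), but the underlying computation---the pentagon identity applied to generators on both sides---is identical.
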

\begin{proof}
    Take
    \(
        a \in A_{02}
    \) and
    \(
        b \in A_{24}
    \). Choose decompositions
    \(
        a = \sum_{s = 1}^N \mu_{012}(x_s, y_s)
    \) and
    \(
        b = \sum_{t = 1}^M \mu_{234}(z_t, w_t)
    \). Writing
    \(
        \mu_{ijk}(p, q)
    \) as \(pq\), we have
    \[
        \sum_s x_s (y_s b)
        = \sum_{s, t} x_s (y_s (z_t w_t))
        = \sum_{s, t} ((x_s y_s) z_t) w_t
        = \sum_t (a z_t) w_t.
    \]
    It follows that the common value
    \(
        \mu_{024}(a, b)
    \) of these expressions is independent on the decompositions. The resulting map \(\mu_{024}\) satisfies the necessary identities and is clearly unique.
\end{proof}

\begin{lemma} \label{tri-dot-2}
    If
    \(
        u \in \Delta^0_i
    \),
    \(
        a \in R_{ij}
    \),
    \(
        b \in R_{jk}
    \), where the indices are non-zero and with distinct absolute values, then
    \[
        (u \cdot a) \triangleleft b
        = \inv a ((u \triangleleft a) b).
    \]
\end{lemma}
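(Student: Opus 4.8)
The plan is to deduce the identity from the one-variable relation \ref{tri-dot}, namely $(\dotminus u \cdot a) \triangleleft b = \inv{(u \triangleleft a)}(ab)$, by replacing $a$ with $-a$ and correcting by a term from the nilpotent filtration $\mathcal D_j$. First I would pin down $\tau(u \cdot a)$ in the $2$-step nilpotent $K$-module $\Delta^0_j$: feeding the decomposition $a + (-a) = 0$ into \ref{dot-dis-r}, using $u \cdot 0 = \dot 0$, \ref{dot-lin} (whence $u \cdot (-a) = (u \cdot a) \cdot (-1)$), \ref{fil-str}, and the centrality of $\mathcal D_j$, one gets $\tau(u \cdot a) = \inv a * (u \triangleleft a)$ and, moreover, $\dotminus(u \cdot (-a)) = (u \cdot a) \dotminus \tau(u \cdot a)$.

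Next, substituting $-a$ for $a$ in \ref{tri-dot} and simplifying the right-hand side using additivity of $\triangleleft$ in the ring variable (\ref{tri-dis-r}), bilinearity of multiplication (\ref{mul-dis-l}, \ref{mul-dis-r}) and additivity of the involution, the two sign changes cancel, so $\bigl((u \cdot a) \dotminus \tau(u \cdot a)\bigr) \triangleleft b = \inv{(u \triangleleft a)}(ab)$. I would then expand the left-hand side by \ref{tri-dis-l} (with the index roles of $i, j$ played by $j, k$): the cross term $(u \cdot a) \circ \bigl((\dotminus \tau(u \cdot a)) \cdot (-b)\bigr)$ vanishes because its second factor lies in $\mathcal D_j \cdot R_{jk} \subseteq \mathcal D_k$ by \ref{dot-fil}, and such an element is killed by $\circ$ against $\Delta^0_j$ via \ref{cir-fil} and \ref{inv-cir}; the remaining filtration term equals $-(\tau(u \cdot a) \triangleleft b)$ by \ref{tri-lin-fil}. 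This gives the intermediate identity $(u \cdot a) \triangleleft b = \inv{(u \triangleleft a)}(ab) + \tau(u \cdot a) \triangleleft b$.

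Finally I would evaluate $\tau(u \cdot a) \triangleleft b = (\inv a * (u \triangleleft a)) \triangleleft b$ by \ref{tri-str}, applied with its three arguments equal to $\inv a \in R_{-j,-i}$, $u \triangleleft a \in R_{-i,j}$, $b \in R_{jk}$ (a legitimate instance, the three relevant indices having pairwise distinct nonzero absolute values): this yields $\inv a\bigl((u \triangleleft a)b\bigr) - \inv{(u \triangleleft a)}(ab)$. Substituting into the intermediate identity, the two occurrences of $\inv{(u \triangleleft a)}(ab)$ cancel, leaving $(u \cdot a) \triangleleft b = \inv a\bigl((u \triangleleft a)b\bigr)$, as desired.

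The argument is short, and the only real care is bookkeeping: tracking which auxiliary elements lie in the $\mathcal D_\bullet$ (so that they are central and annihilated by $\circ$ in the relevant degrees) and matching the index patterns forced by \ref{tri-dis-l}, \ref{tri-str} and \ref{dot-dis-r} after each substitution. In particular the associativity conditions and the modules $R_{0i}$, $R_{i0}$, $R_{-i,i}$ constructed just above are not used, so the statement in fact holds for every partial graded odd form $K$-algebra satisfying \ref{long-idem} and \ref{short-idem}.
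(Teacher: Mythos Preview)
Your argument is correct. It differs from the paper's proof in a pleasant way: both proofs start from \ref{tri-dot}, but the paper substitutes $u\mapsto\dotminus u$ and then unwinds $(\dotminus u)\triangleleft a$ via the $\pi$-reformulated identities \ref{tri-dis-l-2} and \ref{inv-tri-2} (which live in the newly constructed quotients $R_{0i}=\Delta^0_i/\mathcal D_i$), finishing with \ref{tri-ass} and \ref{mul-ass-2}. You instead substitute $a\mapsto -a$, recognise the discrepancy between $\dotminus(u\cdot(-a))$ and $u\cdot a$ as $\tau(u\cdot a)=\inv a*(u\triangleleft a)\in\mathcal D_j$, and remove it using \ref{tri-dis-l} together with \ref{tri-str}. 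The paper's route exploits the $R_{0i}$ machinery it has just set up; yours stays entirely inside the axioms \ref{mul-cen}--\ref{dot-ass} and \ref{cir-fil}--\ref{dot-lin-fil}, confirming your observation that neither the associativity conditions nor the modules $R_{0i}$, $R_{i0}$, $R_{-i,i}$ are needed (in fact you do not even use \ref{long-idem} or \ref{short-idem}).
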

\begin{proof}
    By \ref{tri-dot}, \ref{pi-dot}, \ref{tri-dis-l-2}, \ref{inv-tri-2} the left hand side is
    \[
        \inv{ ((\dotminus u) \triangleleft a) } (ab)
        = \inv{ (
            - u \triangleleft a
            - \inv{\pi(u)} \pi(u \cdot a)
        ) } (ab)
        = \inv a (u \triangleleft ab)
        - (\inv{ \pi(u \cdot a) } \pi(u)) (ab)
        + \inv{ \pi(u \cdot a) } \pi(u \cdot ab).
    \]
    This coincides with the right hand side by \ref{tri-ass}, \ref{pi-dot}, and \ref{mul-ass-2}.
\end{proof}

\begin{lemma} \label{pres-ring}
    Let \(i\), \(j\) be indices such that
    \(
        |i| = |j| \in \{ 0, 1 \}
    \). Then \(R_{ij}\) is generated by formal products \(ab\),
    \(
        a \in R_{ik}
    \),
    \(
        b \in R_{kj}
    \), \(|k| = 2\) with the only relations
    \begin{itemize}

        \item
        \(
            (a, b) \mapsto ab
        \) are biadditive;

        \item
        \(
            ((ab)c)d = a(b(cd))
        \),
        \(
            a \in R_{ik}
        \),
        \(
            b \in R_{kl}
        \),
        \(
            c \in R_{lm}
        \),
        \(
            d \in R_{mj}
        \),
        \(
            |k| = |m| = 2
        \),
        \(
            |l| = 3
        \).

    \end{itemize}
    In the case
    \(
        i = -j \neq 0
    \) there are additional generators \(\rho(u)\) for
    \(
        u \in \Delta^0_j
    \) and additional relations
    \begin{itemize}

        \item
        \(
            \rho(u \cdot k) = \rho(u) k^2
        \) and
        \(
            \rho(kv) = \rho(v) k
        \) for
        \(
            u \in \Delta^0_i
        \),
        \(
            v \in \mathcal D_i
        \), \(k \in K\);

        \item
        \(
            \rho(u \dotplus v \cdot a)
            = \rho(u)
            - (\inv{\pi(u)} \pi(v)) a
            + \inv a (v \triangleleft a)
        \) for
        \(
            u \in \Delta^0_j
        \),
        \(
            v \in \Delta^0_k
        \),
        \(
            a \in R_{kj}
        \), \(|k| = 2\);

        \item
        \(
            \rho(u \dotplus a * b)
            = \rho(u) + ab - \inv{\,b\,} \inv a
        \) for
        \(
            u \in \Delta^0_j
        \),
        \(
            a \in R_{ik}
        \),
        \(
            b \in R_{kj}
        \), \(|k| = 2\).

    \end{itemize}
\end{lemma}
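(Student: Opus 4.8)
The plan is to prove that the small presentation of the statement and the large presentation of $R_{ij}$ from the previous section (one generator $ab$ for every admissible middle index $k$, full bilinearity, full associativity, and the four $\rho$-relations) define the same $K$-module, by exhibiting mutually inverse $K$-linear maps. Write $S$ for the module defined by the small presentation.

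\emph{The easy direction} will be $S \twoheadrightarrow R_{ij}$: since every small generator is in particular a large generator, it suffices to check that the small relations hold in $R_{ij}$ and that the small generators generate it. The four-fold relation $((ab)c)d = a(b(cd))$ is an instance of full associativity, and the two abbreviated $\rho$-relations will follow from the large ones $\rho(u \dotplus v) = \rho(u) - \inv{\pi(u)}\pi(v) + \rho(v)$, $\rho(u \cdot a) = \inv a(u \triangleleft a)$, $\rho(a * b) = ab - \inv{\,b\,}\inv a$ together with \ref{pi-dot} and bilinearity. For generation: by \ref{long-idem} any element of $R_{ik}$ with $|k| \notin \{0, |i|, 2\}$ is a sum of products $a' a''$ with $a' \in R_{i, \pm 2}$ and $a'' \in R_{\pm 2, k}$, so a large generator $ab$ equals $\sum_s a'_s(a''_s b)$, a sum of small generators (here $|j| = |i| \neq 2$, so $a''_s b \in R_{\pm 2, j}$); and by \ref{short-idem}, \ref{half-idem} every $\rho(u)$ is likewise reached after an analogous expansion of $u \in \Delta^0_j$ through index $\pm 2$.

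\emph{The hard direction} is the map $R_{ij} \to S$. First I would use Lemma \ref{mul-cons}, with the quadruples of groups flagged in the remark before it (taking $A_{02} = R_{ik}$, $A_{24} = R_{kj}$, the remaining $A$'s of the form $R_{i,\pm 2}$, $R_{\pm 2,k}$, $R_{k,\pm 2}$, $R_{\pm 2,j}$, and the two ``outer'' maps equal to $(a,b) \mapsto [ab]_S$), to construct by induction on $|k|$ a well-defined biadditive product $R_{ik} \times R_{kj} \to S$ for every middle index $k$, extending the case $|k| = 2$ and compatible with all the given products among the $R$'s. For $|k| = 3$ the compatibility hypothesis of Lemma \ref{mul-cons} is exactly the four-fold relation defining $S$; for larger $|k|$ that four-fold compatibility is no longer a defining relation but is supplied by the associativity conditions \ref{ass-law-l}--\ref{ass-law-bal}, the standing hypothesis of this section, after expanding the factors through index $\pm 2$. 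With these products available, bilinearity and the general associativity relations of the large presentation become identities in $S$ — again via the associativity conditions in the mixed-middle-index cases — which pins down the ring part of the map. For the $\rho$-generators (the case $i = -j$), I would define $\rho(u) \in S$ by expanding $u$ through index $\pm 2$ with \ref{short-idem} and applying the abbreviated $\rho$-relations; independence of the expansion and validity of the four large $\rho$-relations will come from \ref{tri-dis-l-2}, \ref{inv-tri-2}, \ref{tri-dot}, \ref{dot-str}, \ref{tri-ass} and Lemma \ref{tri-dot-2}.

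Finally I would check that the two maps are mutually inverse on generators — a generator with $|k| = 2$ goes to its $S$-avatar and back; a large generator $ab$ with $|k| \neq 2$ goes to $\sum_s [a'_s(a''_s b)]_S$ and back to $\sum_s a'_s(a''_s b) = ab$; the $\rho$-generators match by construction — so $S \cong R_{ij}$, which is the assertion. The main obstacle will be the hard direction: showing that the single four-fold relation of the small presentation, together with the associativity conditions, forces \emph{all} associativity (and $\rho$-) relations of the large presentation. This is the combinatorial induction on the index range that Lemma \ref{mul-cons} and the associativity conditions were set up for, with the $\rho$-bookkeeping for $R_{i,-i}$ adding an extra round of case analysis.
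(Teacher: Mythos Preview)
Your proposal is correct and follows essentially the same route as the paper: construct the products \(R_{ik} \times R_{kj} \to S\) for all admissible \(k\) via Lemma~\ref{mul-cons}, then verify that the full list of defining relations of \(R_{ij}\) (including the \(\rho\)-relations, handled via Lemma~\ref{tri-dot-2} and the expansions you name) holds in \(S\).

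One small correction on the ``hard direction'': the four-fold compatibility \(((ab)c)d = a(b(cd))\) in \(S\) for \(|k| = |m| = 2\) and \(|l| \notin \{|i|, 2, 3\}\) is not obtained from the associativity conditions \ref{ass-law-l}--\ref{ass-law-bal} (those are identities in \(R\), not in \(S\)). Instead the paper expands \(b \in R_{kl}\) through index \(\pm 3\) using \ref{long-idem} (or \ref{half-idem} when \(l = 0\)), writes \(b = xy\) with \(x \in R_{k,\pm 3}\), \(y \in R_{\pm 3, l}\), and then reduces to the \emph{defining} four-fold relation of \(S\) (the \(|l| = 3\) case) together with ordinary three-factor associativity \ref{mul-ass-2} in \(R\). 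So there is no induction on \(|k|\): once this extended four-fold relation is in hand, a single application of Lemma~\ref{mul-cons} produces all the products at once, and the remaining associativity checks proceed exactly as you outline.
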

\begin{proof}
    Let \(R'_{ij}\) be the group with presentation from the statement. First of all let us check that
    \(
        ((ab)c)d = a(b(cd))
    \) holds in \(R'_{ij}\) for
    \(
        a \in R_{ik}
    \),
    \(
        b \in R_{kl}
    \),
    \(
        c \in R_{lm}
    \),
    \(
        d \in R_{mj}
    \),
    \(
        |k| = |m| = 2
    \),
    \(
        |l| \notin \{ |i|, 2, 3 \}
    \). Indeed, without loss of generality
    \(
        b = xy
    \) for
    \(
        x \in R_{k, \pm 3}
    \) and
    \(
        y \in R_{\pm 3, l}
    \), so
    \begin{align*}
        ((a(xy))c)d
        = (((ax)y)c)d
        = ((ax)(yc))d
        = a(x((yc)d))
        = a(x(y(cd)))
        = a((xy)(cd)).
    \end{align*}
    Now by lemma \ref{mul-cons} there is unique multiplication
    \(
        R_{ik} \times R_{kj} \to R'_{ij}
    \) for
    \(
        |k| \notin \{ |i|, 2 \}
    \) such that
    \(
        (ab) c = a (bc)
    \) for
    \(
        a \in R_{ik}
    \),
    \(
        b \in R_{k, \pm 2}
    \),
    \(
        c \in R_{\pm 2, j}
    \) and for
    \(
        a \in R_{i, \pm 2}
    \),
    \(
        b \in R_{\pm 2, k}
    \),
    \(
        c \in R_{kj}
    \). We check that
    \(
        (ab) c = a (bc)
    \) for
    \(
        a \in R_{ik}
    \),
    \(
        b \in R_{kl}
    \),
    \(
        c \in R_{lj}
    \), and distinct \(|i|\), \(|k|\), \(|l|\) such that
    \(
        |k|, |l| \neq 2
    \). Without loss of generality, \(b = xy\) for some
    \(
        x \in R_{k, \pm 2}
    \),
    \(
        y \in R_{\pm 2, l}
    \). Then
    \[
        (a (xy)) c
        = ((ax) y) c
        = (ax) (yc)
        = a (x (yc))
        = a ((xy) c).
    \]

    Now consider the case
    \(
        i = -j \in \{ -1, 1 \}
    \). By \ref{short-idem}
    \(
        \rho(u \dotplus v)
        = \rho(u) - \inv{\pi(u)} \pi(v) + \rho(v)
    \) for all
    \(
        u, v \in \Delta^0_j
    \). In order to check that
    \(
        \rho(a * b) = ab - \inv{\,b\,} \inv a
    \) for
    \(
        a \in R_{ik}
    \),
    \(
        b \in R_{kj}
    \),
    \(
        |k| \notin \{ 1, 2 \}
    \) we may assume that \(a = xy\) for
    \(
        x \in R_{il}
    \),
    \(
        y \in R_{lk}
    \), \(|l| = 2\). Then
    \[
        \rho(xy * b)
        = \rho(x * y b)
        = x (y b) - \inv{ (y b) } \inv x
        = (xy) b - \inv{\,b\,} \inv{ (xy) }.
    \]
    It remains to check that
    \(
        \rho(u \cdot a) = \inv a (u \triangleleft a)
    \) for
    \(
        u \in \Delta^0_k
    \),
    \(
        a \in R_{kj}
    \), \(|k| \geq 3\). Here it suffices to consider the cases
    \(
        u = w \cdot x
    \),
    \(
        w \in \Delta^0_l
    \),
    \(
        x \in R_{lk}
    \), \(|l| = 2\) and
    \(
        u = y * z
    \),
    \(
        y \in R_{-k, 2}
    \),
    \(
        z \in R_{2k}
    \). In the first case we have
    \[
        \rho((w \cdot x) \cdot a)
        = \rho(w \cdot xa)
        = \inv{ (xa) } (w \triangleleft xa)
        = \inv a (\inv{\,x\,} ((w \triangleleft x) a))
        = \inv a ((w \cdot x) \triangleleft a)
    \]
    using lemma \ref{tri-dot-2} and in the second case
    \[
        \rho((y * z) \cdot a)
        = \rho(\inv a y * za)
        = (\inv a y) (za) - \inv{ (za) } (\inv y a)
        = \inv a (y (za) - \inv z (\inv y a))
        = \inv a ((y * z) \triangleleft a).
    \qedhere\]
\end{proof}

The abelian group
\(
    R = \bigoplus_{i, j = -\ell}^\ell R_{ij}
\) has natural involution, it is given on generators of \(R_{ij}\) for \(|i| = |j|\) by
\begin{align*}
    \inv{ (ab) } &= \inv{\,b\,} \inv a,
    &
    \inv{ \rho(u) }
    = \rho(\dotminus u)
    = -\inv{\pi(u)} \pi(u) - \rho(u).
\end{align*}

\begin{lemma} \label{ring-struct}
    There are unique product maps
    \(
        R_{ij} \times R_{jk} \to R_{ik}
    \) for
    \(
        |j| \in \{ |i|, |k| \}
    \) and involution maps
    \(
        \inv{(-)} \colon R_{i, \pm i} \to R_{\mp i, -i}
    \) making
    \(
        R = \bigoplus_{i, j = -l}^l R_{ij}
    \) an associative \(K\)-algebra with involution. Also,
    \(
        u \triangleleft a = \rho(u) a
    \) for
    \(
        u \in \Delta^0_i
    \),
    \(
        a \in R_{ij}
    \),
    \(
        0 \neq |i| \neq |j| \neq 0
    \);
    \(
        \inv{\rho(u)} = -\inv{\pi(u)} \pi(u) - \rho(u)
    \) for
    \(
        u \in \Delta^0_i
    \), \(i \neq 0\); and
    \(
        R_{ij} = \langle
            R_{ik} R_{kj},
            R_{i, -k} R_{-k, j}
        \rangle
    \) for all \(i\), \(j\), and \(k \neq 0\).
\end{lemma}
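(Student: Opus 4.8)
The plan is to extend the partial multiplication to a genuine graded product on $R=\bigoplus_{i,j}R_{ij}$, in stages, then to verify associativity and that the stipulated involution is an anti-automorphism of order at most $2$, and finally to read off the three displayed identities. The only products not yet available are those $R_{ij}\times R_{jk}\to R_{ik}$ with $|j|\in\{|i|,|k|\}$, i.e.\ (left or right) multiplication by the ``block-diagonal'' components $R_{ii}$ and $R_{i,-i}$, including the degenerate cases $R_{i,-i}\times R_{-i,i}\to R_{ii}$ and the cases with a zero index; all of these must be defined through the generators-and-relations descriptions of the components $R_{ij}$ with $|i|=|j|$ set up above.

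For a block-diagonal factor I would push multiplication through its presentation in the economical form of lemma~\ref{pres-ring} (after permuting indices so that the ``remote'' indices are $\pm2,\pm3$): a generator of $R_{ij}$ with $|i|=|j|$ is a formal product $ab$ with $a\in R_{im}$, $b\in R_{mj}$, $|m|\notin\{0,|i|\}$, together with the extra generators $\rho(u)$, $u\in\Delta^0_i$, when $i=-j$. I set $(ab)\cdot c:=a(bc)$ and $c\cdot(ab):=(ca)b$ whenever the right-hand sides are products already constructed, and---the essential choice---$\rho(u)\cdot a:=u\triangleleft a$ for $u\in\Delta^0_i$, $a\in R_{ij}$ with $0\ne|i|\ne|j|\ne0$, so that the first displayed identity $u\triangleleft a=\rho(u)a$ holds by construction on these components. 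Well-definedness of $(ab)\cdot c:=a(bc)$ against the defining relations of $R_{ij}$ comes from lemma~\ref{mul-cons}: its generation hypotheses are exactly \ref{long-idem} and \ref{short-idem}, while its four-fold associativity hypothesis is, in the configurations that stay among off-block components, a consequence of \ref{mul-ass} (and of \ref{mul-ass-2} in the presence of a zero index), and, in the configurations whose intermediate components are themselves block-diagonal, a consequence of the associativity conditions \ref{ass-law-l}--\ref{ass-law-bal}. Compatibility of $\rho(u)\cdot a:=u\triangleleft a$ with the relations defining $R_{-i,i}$ I would check directly: the relation $\rho(u\cdot c)=\inv c\,(u\triangleleft c)$ multiplied by $a$ becomes $(u\cdot c)\triangleleft a=\inv c\,((u\triangleleft c)a)$, which is lemma~\ref{tri-dot-2} together with \ref{tri-ass}; the relation $\rho(a'*b')=a'b'-\inv{b'}\,\inv{a'}$ becomes $(a'*b')\triangleleft a=a'(b'a)-\inv{b'}(\inv{a'}a)$, which is \ref{tri-str}; and the $\rho$-additivity and $\rho$-linearity relations become \ref{tri-dis-l-2}, \ref{tri-lin}, \ref{tri-lin-fil} once one uses $\pi(a'*b')=0$ (by \ref{fil-str} and \ref{pi-aug}). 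Left multiplication by $R_{-i,i}$ is symmetric via the involution.

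Next I would prove global associativity: $(ab)c=a(bc)$ for $a\in R_{ij}$, $b\in R_{jk}$, $c\in R_{kl}$. The strategy is to reduce each factor to generators using \ref{long-idem}, \ref{short-idem}, \ref{half-idem} and the presentations, so that the statement collapses to the three-variable laws \ref{mul-ass}, \ref{mul-ass-2}, the $\triangleleft$-laws \ref{tri-ass}, \ref{tri-dot} and lemma~\ref{tri-dot-2}, and, for the bracketings that route through block-diagonal components, the associativity conditions; this is a finite but lengthy case distinction organised by which of $i,j,k,l$ vanish and by coincidences among $|i|,|j|,|k|,|l|$. The involution is defined on generators by $\inv{(ab)}=\inv b\,\inv a$ and $\inv{\rho(u)}=-\inv{\pi(u)}\pi(u)-\rho(u)$; that this descends past the relations of each $R_{ij}$ and satisfies $\inv{\inv a}=a$ and $\inv{ab}=\inv b\,\inv a$ follows from \ref{inv-mul-2}, \ref{str-inv-2}, \ref{str-inv-sq} and, for the $\rho$-generators, again from $\pi(a'*b')=0$; this yields the second displayed identity. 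Finally, $R_{ij}=\langle R_{ik}R_{kj},R_{i,-k}R_{-k,j}\rangle$ is \ref{long-idem} for distinct nonzero $|i|\ne|j|$, and for the block-diagonal and zero-index components it follows by writing $R_{ij}$ as the span of products $R_{im}R_{mj}$ over remote $|m|$ (the presentation, or \ref{half-idem}), expanding $R_{im}$ by \ref{long-idem} for $|m|\ne|k|$, and using associativity to land in $R_{ik}R_{kj}+R_{i,-k}R_{-k,j}$; the $\rho$-generators of $R_{-i,i}$ are absorbed by unpacking \ref{short-idem} through the presentation relations for $\rho$.

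The main obstacle is the second step: organising the staged construction of the block-diagonal products and checking that they are well defined against the abstract presentations of the components with $|i|=|j|$. This is exactly where the associativity conditions \ref{ass-law-l}--\ref{ass-law-bal} become indispensable, and it is the reason lemmas~\ref{mul-cons}, \ref{pres-ring} and~\ref{tri-dot-2} were prepared in advance; once the products are in place, global associativity and the anti-automorphism property reduce mechanically to the three-variable identities and the conditions already listed.
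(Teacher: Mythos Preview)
Your proposal is correct and follows essentially the same approach as the paper: define the missing products on generators via $(ab)c:=a(bc)$ and $\rho(u)a:=u\triangleleft a$, check well-definedness against the presentation of lemma~\ref{pres-ring} (invoking \ref{ass-law-l}--\ref{ass-law-bal} precisely where the intermediate bracketings land in block-diagonal components), then run a case analysis for global associativity and read off the involution and generation claims. The paper organises the staging slightly differently---it first extends lemma~\ref{tri-dot-2} to the case $|j|=|k|$ in order to justify the involution formula on $\rho$-generators, and it reserves lemma~\ref{mul-cons} for the final stage $|i|=|j|=|k|$ rather than for the initial $|i|=|j|\neq|k|$ step---but these are bookkeeping choices, not substantive differences.
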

\begin{proof}
    First of all, lemma \ref{tri-dot-2} holds in the case
    \(
        |j| = |k| \neq |i|
    \). Indeed, if
    \(
        b = xy
    \) for
    \(
        x \in R_{jl}
    \),
    \(
        y \in R_{lk}
    \),
    \(
        |l| \notin \{ |i|, |k| \}
    \), then
    \[
        (u \cdot a) \triangleleft xy
        = ((u \cdot a) \triangleleft x) y
        = (\inv a ((u \triangleleft a) x)) y
        = \inv a (((u \triangleleft a) x) y)
        = \inv a ((u \triangleleft a) (xy)).
    \]
    It follows that the involution
    \(
        \inv{(-)} \colon R_{ij} \to R_{-j, -i}
    \) for
    \(
        |i| = |j|
    \) may be constructed on generators by
    \(
        \inv{(ab)} = \inv{\,b\,} \inv a
    \) for
    \(
        a \in R_{ik}
    \),
    \(
        b \in R_{kj}
    \),
    \(
        |k| \neq |i|
    \) and
    \(
        \inv{\rho(u)} = -\inv{\pi(u)} \pi(u) - \rho(u)
    \) for
    \(
        u \in \Delta^0_j
    \),
    \(
        i = -j \neq 0
    \).

    We denote by
    \(
        \mathrm A[i, j, k, l]
    \) the identity
    \(
        (ab)c = a(bc)
    \) for
    \(
        a \in R_{ij}
    \),
    \(
        b \in R_{jk}
    \),
    \(
        c \in R_{kl}
    \). It holds if the absolute values of the indices are distinct except, possibly, \(|i| = |l|\). In general it suffices to check such identities on generators of the corresponding abelian groups.

    Let us construct multiplications
    \(
        R_{ij} \times R_{jk} \to R_{ik}
    \) for
    \(
        |i| = |j| \neq |k|
    \) satisfying
    \(
        \mathrm A[i, l, j, k]
    \) for
    \(
        |l| \notin \{ |i|, |k| \}
    \) and
    \(
        \rho(u) a = u \triangleleft a
    \) for
    \(
        u \in \Delta^0_j
    \),
    \(
        a \in R_{jk}
    \) in the case
    \(
        i = -j \neq 0
    \). Without loss of generality, \(|i| = 1\) and \(|k| = 3\) or \(i = 0\) and \(|k| = 1\). These two identities give a well-defined multiplication when applied to generators from lemma \ref{pres-ring} by \ref{ass-law-l} and other identities. The identities
    \(
        \mathrm A[i, l, j, k]
    \) for
    \(
        |l| \notin \{ |i|, |k|, 2 \}
    \) follow from
    \[
        ((xy)b)c
        = (x(yb))c
        = x((yb)c)
        = x(y(bc))
        = (xy)(bc)
    \]
    for any
    \(
        x \in R_{im}
    \),
    \(
        y \in R_{ml}
    \), \(|m| = 2\). The multiplications
    \(
        R_{ij} \times R_{jk} \to R_{ik}
    \) for
    \(
        |i| \neq |j| = |k|
    \) may be constructed using the involution.

    From \ref{long-idem}, \ref{short-idem}, lemma \ref{pres-ring} we easily get
    \(
        R_{ij} = \langle
            R_{ik} R_{kj},
            R_{i, -k} R_{-k, j}
        \rangle
    \) for
    \(
        |k| \notin \{ |i|, |j|, 0 \}
    \). Now we check
    \(
        \mathrm A[i, j, k, l]
    \) if no integer appears thrice in \(|i|\), \(|j|\), \(|k|\), \(|l|\):
    \begin{itemize}

        \item Let \(|i| = |j|\), \(|k|\), \(|l|\) be distinct. If they are non-zero, then without loss of generality \(|i| = 1\), \(|k| = 2\), \(|l| = 3\) and we apply \ref{ass-law-bal} to get
        \[
            (((xy)z)b)c
            = ((xy)(zb))c
            = x((yz)(bc))
            = (x(yz))(bc)
        \]
        for
        \(
            x \in R_{im}
        \),
        \(
            y \in R_{mn}
        \),
        \(
            z \in R_{nj}
        \), \(|m| = 2\), \(|n| = 3\). Otherwise
        \[
            ((xy)b)c
            = (x(yb))c
            = x((yb)c)
            = x(y(bc))
            = (xy)(bc)
        \]
        for
        \(
            x \in R_{im}
        \),
        \(
            y \in R_{mj}
        \),
        \(
            m \notin \{ 0, |i|, |j|, |k|, |l| \}
        \). The case of distinct \(|i|\), \(|j|\), \(|k| = |l|\) is symmetric.

        \item Let \(|i|\), \(|j| = |k|\), \(|l|\) be distinct. If they are non-zero, then without loss of generality \(|j| = 1\), \(|i| = 2\), \(|l| = 3\). By \ref{ass-law-bal} we have
        \[
            (a(x(yz)))c
            = ((ax)(yz))c
            = a((xy)(zc))
            = a(((xy)z)c)
        \]
        for
        \(
            x \in R_{jm}
        \),
        \(
            y \in R_{mn}
        \),
        \(
            z \in R_{nk}
        \), \(|m| = 3\), \(|n| = 2\). Otherwise the identity follows from
        \[
            (a(xy))c
            = ((ax)y)c
            = (ax)(yc)
            = a(x(yc))
            = a((xy)c)
        \]
        for
        \(
            x \in R_{jm}
        \),
        \(
            y \in R_{mk}
        \),
        \(
            m \notin \{ 0, |i|, |j|, |k|, |l| \}
        \).

        \item In the remaining case \(|i|\), \(|j|\), \(|k|\), \(|l|\) form two pairs of equal numbers and the identity follows from
        \[
            (a(xy))c
            = ((ax)y)c
            = (ax)(yc)
            = a(x(yc))
            = a((xy)c)
        \]
        for
        \(
            a \in R_{ij}
        \),
        \(
            x \in R_{jm}
        \),
        \(
            y \in R_{mk}
        \),
        \(
            c \in R_{kl}
        \),
        \(
            |m| \notin \{ 0, |i|, |j|, |k|, |l| \}
        \).

    \end{itemize}

    We are ready to construct the multiplications
    \(
        R_{ij} \times R_{jk} \to R_{ik}
    \) for
    \(
        |i| = |j| = |k|
    \). Without loss of generality,
    \(
        |i| \in \{ 0, 1 \}
    \). By lemma \ref{mul-cons} such a map is unique with the properties
    \(
        \mathrm A[i, \pm 2, j, k]
    \) and
    \(
        \mathrm A[i, j, \pm 3, k]
    \) since
    \[
        ((ab)c)d
        = (a(bc))d
        = a((bc)d)
        = a(b(cd))
    \]
    for
    \(
        a \in R_{i, \pm 2}
    \),
    \(
        b \in R_{\pm 2, j}
    \),
    \(
        c \in R_{j, \pm 3}
    \),
    \(
        d \in R_{\pm 3, k}
    \). If \(|l| \neq |i|\), then
    \(
        \mathrm A[i, l, j, k]
    \) follows from
    \[
        (ab)(cd)
        = ((ab)c)d
        = (a(bc))d
        = a((bc)d)
        = a(b(cd))
    \]
    for
    \(
        a \in R_{il}
    \),
    \(
        b \in R_{lj}
    \),
    \(
        c \in R_{j, \pm 3}
    \),
    \(
        d \in R_{\pm 3, k}
    \). By symmetry,
    \(
        \mathrm A[i, j, m, k]
    \) holds for \(|m| \neq |i|\).

    The constructed multiplication on \(R\) clearly satisfies
    \(
        \inv{(ab)} = \inv{\,b\,} \inv a
    \) for all \(a\) and \(b\). The identities
    \(
        \mathrm A[i, j, k, l]
    \) for
    \(
        |i| = |j| = |k|
    \) or
    \(
        |j| = |k| = |l|
    \) follow from
    \[
        (a(xy))c
        = ((ax)y)c
        = (ax)(yc)
        = a(x(yc))
        = a((xy)c)
    \]
    for
    \(
        x \in R_{jm}
    \),
    \(
        y \in R_{mk}
    \),
    \(
        |m| \notin \{ 0, |i|, |j|, |k|, |l| \}
    \).

    It remains to check that \(R_{ij}\) is generated by
    \(
        R_{i, \pm k} R_{\pm k, j}
    \) for any \(k \neq 0\):
    \begin{itemize}

        \item If
        \(
            |i| = |k| \neq |j|
        \), then take \(l\) such that
        \(
            |l| \notin \{ |i|, |j|, 0 \}
        \). We have
        \begin{align*}
            R_{ij}
            &= \langle
                R_{il} R_{lj},
                R_{i, -l} R_{-l, j}
            \rangle \\
            &= \langle
                R_{il} R_{lk} R_{kj},
                R_{il} R_{l, -k} R_{-k, j},
                R_{i, -l} R_{-l, k} R_{kj},
                R_{i, -l} R_{-l, -k} R_{-k, j}
            \rangle \\
            &= \langle
                R_{ik} R_{kj},
                R_{i, -k} R_{-k, j}
            \rangle.
        \end{align*}

        \item If \(|k| = |j|\), then take \(l\) such that
        \(
            |l| \notin \{ |i|, |j|, 0 \}
        \). We have
        \begin{align*}
            R_{ij}
            &= \langle
                R_{il} R_{lj},
                R_{i, -l} R_{-l, j}
            \rangle \\
            &= \langle
                R_{ik} R_{kl} R_{lj},
                R_{i, -k} R_{-k, l} R_{lj},
                R_{ik} R_{k, -l} R_{-l, j},
                R_{i, -k} R_{-k, -l} R_{-l, j}
            \rangle \\
            &= \langle
                R_{ik} R_{kj},
                R_{i, -k} R_{-k, j}
            \rangle.
        \qedhere\end{align*}

    \end{itemize}
\end{proof}

\section{Construction of the odd form parameter}

In this section we everywhere implicitly use lemma \ref{ring-struct}. It is easy to check that there are unique \(K\)-module homomorphisms
\(
    \phi \colon R_{-i, i} \to \mathcal D_i
\) for all \(i \neq 0\) such that
\(
    \phi(\rho(u)) = u \dotplus u \cdot (-1)
\) and
\(
    \phi(ab) = a * b
\). In particular,
\(
    (R_{0i}, R_{-i, i}, \Delta_i)
\) are odd form groups for \(i \neq 0\).

We construct
\(
    \Delta^0_0
\) as an odd form parameter to the hermitian group
\(
    (R_{00}, R_{00})
\) by generators and relations (using lemmas \ref{ofg-free} and \ref{ofg-fact}). The generators are
\(
    u \cdot a
\) for
\(
    u \in \Delta^0_i
\),
\(
    a \in R_{i0}
\), \(i \neq 0\) with
\(
    \pi(u \cdot a) = \pi(u) a
\) and
\(
    \rho(u \cdot a) = \inv a \rho(u) a
\). The relations are
\begin{itemize}

    \item
    \(
        (u, a) \mapsto u \cdot a
    \) is a component of the sesquiquadratic map
    \(
        (R_{0i}, R_{-i, i}, \Delta^0_i) \times R_{i0}
        \to (R_{00}, R_{00}, \Delta^0_0)
    \) for \(i \neq 0\);

    \item
    \(
        (u \cdot a) \cdot b = u \cdot ab
    \) for
    \(
        u \in \Delta^0_i
    \),
    \(
        a \in R_{ij}
    \),
    \(
        b \in R_{j0}
    \), and
    \(
        0 \neq |i| \neq |j| \neq 0
    \);

    \item
    \(
        (u \cdot k) \cdot a = u \cdot ka
    \) for
    \(
        u \in \Delta^0_i
    \),
    \(
        a \in R_{i0}
    \), \(k \in K\), \(i \neq 0\);

    \item if
    \(
        \sum_{1 \leq s \leq N}^\cdot v_s \cdot b_s
        \dotplus \phi(a)
        = \dot 0
    \), then
    \(
        \sum_{1 \leq s \leq N}^\cdot kv_s \cdot b_s
        \dotplus \phi(ka)
        = \dot 0
    \) for all \(k \in K\), where
    \(
        v_s \in \mathcal D_{i_s}
    \),
    \(
        b_s \in R_{i_s 0}
    \), \(i_s \neq 0\).

\end{itemize}

Clearly,
\(
    \Delta^0_0
\) is a \(2\)-step nilpotent \(K\)-module with the nilpotent filtration
\(
    \mathcal D_0 = \langle
        \phi(R_{00});
        \mathcal D_i \cdot R_{i0}, i \neq 0
    \rangle
\). Namely,
\begin{itemize}

    \item
    \(
        k(v \cdot a) = kv \cdot a
    \) for
    \(
        v \in \mathcal D_i
    \),
    \(
        a \in R_{i0}
    \), \(i \neq 0\);

    \item
    \(
        (u \cdot a) \cdot k = u \cdot ka
    \) for
    \(
        u \in \Delta^0_i
    \),
    \(
        a \in R_{i0}
    \), \(i \neq 0\);

    \item
    \(
        \tau(u \cdot a)
        = \phi(\inv a \rho(u) a)
    \) for
    \(
        u \in \Delta^0_i
    \),
    \(
        a \in R_{i0}
    \), \(i \neq 0\).

\end{itemize}
The sesquiquadratic map
\(
    (R_{00}, R_{00}, \Delta^0_0) \times K
    \to (R_{00}, R_{00}, \Delta^0_0)
\) exists by lemmas \ref{ofg-sq-free}, \ref{ofg-sq-fact} and is associative by lemma \ref{ofg-sq-ass}.

The next lemma is usually applied to the abelian groups mentioned before lemma \ref{mul-cons} and the odd form groups
\(
    (R_{0i}, R_{-i, i}, \Delta^0_i)
\) and
\[
    \bigl(
        R_{0j} \oplus R_{0, -j},
        R_{-j, -j}
            \oplus R_{-j, j}
            \oplus R_{j, -j}
            \oplus R_{jj},
        \Delta^0_j
            \dotoplus \Delta^0_{-j}
            \dotoplus \phi(R_{jj})
    \bigr)
\]
for \(j \neq 0\), where the odd form parameter
\(
    \Delta^0_j
    \dotoplus \Delta^0_{-j}
    \dotoplus \phi(R_{jj})
\) is freely generated by subgroups
\(
    \Delta^0_{\pm j}
\), see lemmas \ref{ofg-free} and \ref{ofg-fact}. Sesquiquadratic maps between such odd form groups are easily constructed by ``usual'' sesquiquadratic maps
\(
    (R_{0i}, R_{-i, i}, \Delta^0_i) \times R_{ij}
    \to (R_{0j}, R_{-j, j}, \Delta^0_j)
\) using lemmas \ref{ofg-sq-free} and \ref{ofg-sq-fact}, the associativity laws for them follow from lemma \ref{ofg-sq-ass}.

\begin{lemma} \label{act-cons}
    Let \(A_{ij}\) be abelian groups for
    \(
        1 \leq i < j \leq 4
    \),
    \(
        Q_i = (M_i, H_i, \Delta_i)
    \) be odd form groups for
    \(
        1 \leq i \leq 4
    \),
    \(
        \mu_{ijk}
        \colon A_{ij} \times A_{jk}
        \to A_{ik}
    \) be biadditive maps for
    \(
        1 \leq i < j \leq 4
    \),
    \(
        (\alpha_{ij}, \beta_{ij}, \gamma_{ij})
        \colon Q_i \times A_{ij}
        \to Q_j
    \) be sesquiquadratic maps for
    \(
        1 \leq i < j \leq 4
    \) unless
    \(
        (i, j) = (2, 4)
    \), and
    \(
        (\alpha_{24}, \beta_{24})
        \colon (M_2, H_2) \times A_{24}
        \to (M_4, H_4)
    \) be a sesquiquadratic map in the sense of hermitian groups. Suppose that the odd form parameter
    \(
        \Delta^0_2
    \) is generated by the image of
    \(
        \gamma_{12}
    \),
    \(
        A_{24} = \langle
            \mu_{234}(A_{23}, A_{34})
        \rangle
    \), and all possible associative laws (including
    \(
        \mu_{134}(\mu_{123}(a, b), c)
        = \mu_{124}(a, \mu_{234}(b, c))
    \)) hold. Then
    \(
        (\alpha_{24}, \beta_{24})
    \) may be completed to a unique sesquiquadratic map
    \(
        (\alpha_{24}, \beta_{24}, \gamma_{24})
        \colon Q_2 \times A_{24}
        \to Q_4
    \) satisfying the remaining associative laws
    \(
        \gamma_{24}(\gamma_{12}(u, a), b)
        = \gamma_{14}(u, \mu_{124}(a, b))
    \) and
    \(
        \gamma_{34}(\gamma_{23}(u, a), b)
        = \gamma_{24}(u, \mu_{234}(a, b))
    \).
\end{lemma}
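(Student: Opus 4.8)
The plan is to obtain $\gamma_{24}$ as a descent of a map constructed on free covers, in the spirit of Lemma~\ref{mul-cons} but upgraded with Lemmas~\ref{ofg-free}, \ref{ofg-sq-free}, \ref{ofg-sq-fact}. Let $F$ be the free odd form parameter over $(M_2,H_2)$ (Lemma~\ref{ofg-free}) with one generator $e_{u,a}$ for each pair $(u,a)\in\Delta_1\times A_{12}$, where $\pi(e_{u,a}):=\pi(\gamma_{12}(u,a))$ and $\rho(e_{u,a}):=\rho(\gamma_{12}(u,a))$; since these are the $\pi$- and $\rho$-values of $\gamma_{12}(u,a)\in\Delta_2$, the assignment $e_{u,a}\mapsto\gamma_{12}(u,a)$, $\phi(h)\mapsto\phi(h)$ extends to a morphism $p\colon F\to\Delta_2$ of odd form groups, and $p$ is surjective because the $\gamma_{12}(u,a)$ generate the odd form parameter $\Delta_2$. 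Also fix a free abelian group $\widetilde A_{24}$ with a surjection $q\colon\widetilde A_{24}\to A_{24}$.

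First I would apply Lemma~\ref{ofg-sq-free} to the free $F$, the free $\widetilde A_{24}$, the pulled-back hermitian sesquiquadratic map $(\alpha_{24},\beta_{24})\circ q$, and the values $\gamma_{14}(u,\mu_{124}(a,q(s)))$ at the generators $e_{u,a}$ of $F$ and $s$ of $\widetilde A_{24}$; their $\pi$- and $\rho$-values are correct by the hermitian associativity law for the index triple $\{1,2,4\}$ and by $\rho(e_{u,a})=\beta_{12}(a,\rho(u),a)$. This produces a sesquiquadratic map $\widehat\gamma\colon F\times\widetilde A_{24}\to\Delta_4$. A short induction on a $\mathbb Z$-expression of $n\in\widetilde A_{24}$ — the $\phi$-corrections on the two sides matching because $\beta_{14}(\mu_{124}(a,c),\rho(u),\mu_{124}(a,c'))=\beta_{24}(c,\beta_{12}(a,\rho(u),a),c')=\beta_{24}(c,\rho(e_{u,a}),c')$ — gives $\widehat\gamma(e_{u,a},n)=\gamma_{14}(u,\mu_{124}(a,q(n)))$ for every $n$, so $\widehat\gamma$ depends in its second argument only on $q(n)$. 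By Lemma~\ref{ofg-sq-fact} it therefore factors through a sesquiquadratic map $\gamma'_F\colon F\times A_{24}\to\Delta_4$ with $\gamma'_F(e_{u,a},b)=\gamma_{14}(u,\mu_{124}(a,b))$.

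The heart of the argument is the identity $\gamma'_F(x,\mu_{234}(z,w))=\gamma_{34}(\gamma_{23}(p(x),z),w)$ for all $x\in F$. Both sides are group homomorphisms $F\to\Delta_4$ (each $\gamma$ is additive in its first argument and $p$ is a homomorphism), so it suffices to check them on the generators $e_{u,a}$ and $\phi(h)$ of $F$; on $e_{u,a}$ this follows from $\mu_{124}(a,\mu_{234}(z,w))=\mu_{134}(\mu_{123}(a,z),w)$ together with the assumed associative laws $\gamma_{14}(u,\mu_{134}(c,w))=\gamma_{34}(\gamma_{13}(u,c),w)$ and $\gamma_{13}(u,\mu_{123}(a,z))=\gamma_{23}(\gamma_{12}(u,a),z)$, and on $\phi(h)$ it reduces to the hermitian associativity $\beta_{24}(\mu_{234}(z,w),h,\mu_{234}(z,w))=\beta_{34}(w,\beta_{23}(z,h,z),w)$. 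Taking now $x\in\ker p$: the right-hand side is $\dot 0$, and since $\rho(x)=0$ (as $p$ is a morphism of odd form groups) the map $\gamma'_F(x,{-})$ is additive, so from $A_{24}=\langle\mu_{234}(A_{23},A_{34})\rangle$ we get $\gamma'_F(x,b)=\dot 0$ for all $b$. Since $\ker p$ is generated by the elements $x\dotminus\dot 0$, which have matching $\pi$- and $\rho$-values, Lemma~\ref{ofg-sq-fact} now lets $\gamma'_F$ descend in its first argument through $\Delta_2=F/\ker p$, yielding the desired $\gamma_{24}\colon\Delta_2\times A_{24}\to\Delta_4$. By construction $\gamma_{24}(\gamma_{12}(u,a),b)=\gamma_{14}(u,\mu_{124}(a,b))$, and $\gamma_{24}(v,\mu_{234}(z,w))=\gamma_{34}(\gamma_{23}(v,z),w)$ follows from the displayed identity with any $x$ satisfying $p(x)=v$; uniqueness is immediate since $\Delta_2$ is generated by the image of $\gamma_{12}$ together with $\phi(H_2)$ and any admissible $\gamma_{24}$ is forced on these and additive in the first argument.

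I expect the coherence identity of the third paragraph to be the main obstacle: it is precisely what reconciles the two a priori different recipes for $\gamma_{24}$ — one obtained by decomposing an element of $\Delta_2$ along $\gamma_{12}$, the other by decomposing an element of $A_{24}$ along $\mu_{234}$ — and it is the only place where the associativity of the $\mu$'s and the pre-existing $\gamma$- and $\beta$-associativity laws not mentioning $\gamma_{24}$ are genuinely used. Everything else is bookkeeping with the free-cover lemmas, and in particular the descent in the first variable goes through without any explicit description of $\ker p$.
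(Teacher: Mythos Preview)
Your argument is correct. It differs from the paper's proof in style rather than in substance: the paper writes an element of $\Delta_2$ as $\sum^\cdot_s \gamma_{12}(w_s,x_s)\dotplus\phi(h)$ and an element of $A_{24}$ as $\sum_t \mu_{234}(y_t,z_t)$, then carries out a single explicit chain of equalities showing that the expression obtained from the first decomposition equals the one obtained from the second; independence of both choices falls out of the same computation, and the two associativity laws are read off directly. You instead package the well-definedness into two applications of Lemma~\ref{ofg-sq-fact} via free covers, and your ``coherence identity'' $\gamma'_F(x,\mu_{234}(z,w))=\gamma_{34}(\gamma_{23}(p(x),z),w)$ is exactly the abstract form of the paper's displayed chain. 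Your route reuses Lemmas~\ref{ofg-free}, \ref{ofg-sq-free}, \ref{ofg-sq-fact} more systematically and avoids writing out the $\phi$-correction terms explicitly; the paper's route is shorter and makes the actual value of $\gamma_{24}(u,a)$ visible. One small remark: in your descent step you appeal to Lemma~\ref{ofg-sq-fact} with $X=\ker p$ generated by pairs $x\dotminus\dot 0$; this is fine because $\pi(x)=0$ and $\rho(x)=0$ for $x\in\ker p$ (since $p$ intertwines $\pi$ and $\rho$), so Lemma~\ref{ofg-fact} applies to give $F/\ker p\cong\Delta_2$ as odd form groups, which you implicitly use.
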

\begin{proof}
    Take
    \(
        u \in \Delta_2
    \) and
    \(
        a \in A_{24}
    \). Choose decompositions
    \(
        u
        = \sum_{1 \leq s \leq N}^\cdot
            \gamma_{12}(w_s, x_s)
        \dotplus \phi(h)
    \) and
    \(
        a = \sum_{t = 1}^M \mu_{234}(y_t, z_t)
    \). Writing
    \(
        \mu_{ijk}(p, q)
    \) and
    \(
        \alpha_{ij}(p, q)
    \) as \(pq\),
    \(
        \beta_{ij}(p, q, r)
    \) as
    \(
        \inv pqr
    \), and
    \(
        \gamma_{ij}(p, q)
    \) as
    \(
        p \cdot q
    \), we have
    \begin{align*}
        \sum_s^\cdot &w_s \cdot x_s a
        \dotplus \phi(\inv aha)
        = \sum_s^\cdot
            w_s \cdot x_s \bigl(
                \sum_t y_t z_t
            \bigr)
        \dotplus \phi\bigl(
            \sum_{t, t'}
                \inv{z_{t'}} \inv{y_{t'}} h y_t z_t
        \bigr) \\
        &= \sum_{s, t}^\cdot w_s \cdot x_s y_t z_t
        \dotplus \phi\bigl(
            \sum_{t, t'}
                \inv{z_{t'}} \inv{y_{t'}} h y_t z_t
            + \sum_s \sum_{t < t'}
                \inv{z_{t'}} \inv{y_{t'}} \inv{x_s}
                \rho(w_s) x_s y_t z_t
        \bigr) \\
        &= \sum_{t, s}^\cdot
            (w_s \cdot x_s y_t) \cdot z_t
        \dotplus \phi\bigl(
            \sum_{t, t'}
                \inv{z_{t'}} \inv{y_{t'}} h y_t z_t
            + \sum_{t < t'}
                \inv{z_{t'}} \inv{y_{t'}} \rho(
                    u \dotminus \phi(h)
                ) y_t z_t
        \bigr) \\
        &= \sum_{t, s}^\cdot (
            (w_s \cdot x_s) \cdot y_t
        ) \cdot z_t
        \dotplus \phi\bigl(
            \sum_t \inv{z_t} \inv{y_t} h y_t z_t
            + \sum_{t < t'}
                \inv{z_{t'}} \inv{y_{t'}}
                \rho(u) y_t z_t
        \bigr) \\
        &= \sum_t^\cdot
            (u \cdot y_t) \cdot z_t
        \dotplus \phi\bigl(
            \sum_{t < t'}
                \inv{z_{t'}} \inv{y_{t'}}
                \rho(u) y_t z_t
        \bigr).
    \end{align*}
    The common value
    \(
        \gamma_{24}(u, a)
    \) of this expressions is independent on the decompositions. Clearly,
    \(
        \gamma_{24}
    \) satisfies all required identities and is unique.
\end{proof}

\begin{lemma} \label{pres-form}
    The odd form parameter
    \(
        \Delta^0_0
    \) is generated by the formal expressions
    \(
        u \cdot a
    \) for
    \(
        u \in \Delta^0_i
    \),
    \(
        a \in R_{i0}
    \),
    \(
        |i| \in \{ 1, 2 \}
    \). The only relations between these generators are the relations from the definition of
    \(
        \Delta^0_0
    \) not involving other generators.
\end{lemma}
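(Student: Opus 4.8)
The plan is to mimic the structure of the construction of $\Delta^0_0$ itself: we built $\Delta^0_0$ from generators $u\cdot a$ with $u\in\Delta^0_i$, $a\in R_{i0}$ over \emph{all} nonzero $i$, subject to the four listed families of relations, and now we must show that the generators with $|i|\in\{1,2\}$ already suffice, with no new relations beyond the evident ones. The idea is to produce, for each nonzero $i$ with $|i|\ge 3$, a formula expressing an arbitrary generator $u\cdot a$ ($u\in\Delta^0_i$, $a\in R_{i0}$) in terms of the distinguished generators, and then to check that all four relation families, when restricted to the distinguished generators, are consequences of the distinguished relations alone.

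First I would handle the \emph{generation} claim. Fix $i$ with $|i|\ge 3$ and pick $j$ with $|j|\in\{1,2\}$ and $|j|\ne|i|$. By \ref{short-idem} (equivalently, by the short idempotency part of lemma \ref{idem-par-ofr}), $\Delta^0_i$ is generated by elements of the shapes $w\cdot b$ with $w\in\Delta^0_j$, $b\in R_{ji}$; $w'\cdot b'$ with $w'\in\Delta^0_{-j}$, $b'\in R_{-j,i}$; and $\phi(c)=c'*c''$ with $c'\in R_{-i,j}$, $c''\in R_{ji}$. Using the relation $(w\cdot b)\cdot a=w\cdot(ba)$ for the first two shapes and the relation identifying $\phi(c)\cdot a$ from the sesquiquadratic structure (together with $\rho$-compatibility) for the third, one rewrites $u\cdot a$ as a $\dotplus$-product of generators $w\cdot(ba)$ with $w\in\Delta^0_{\pm j}$ and terms $\phi(\text{something in }R_{00})$. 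Since $|j|\in\{1,2\}$, iterating this (replacing $b a\in R_{j0}$ again if $|j|=2$ is not yet small enough --- but $|j|\le 2$ already) lands everything in the distinguished generating set together with $\phi(R_{00})\subseteq\mathcal D_0$; and $\phi(R_{00})$ is absorbed since by \ref{long-idem}/lemma \ref{ring-struct} every element of $R_{00}$ is a sum of products $R_{0k}R_{k0}$ with $|k|\in\{1,2\}$, and $\phi(ab)=a*b$ is again of distinguished type. This proves that the listed expressions generate $\Delta^0_0$ as a group.

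Next, the \emph{relations}. One must show that every instance of the four defining relation families of $\Delta^0_0$ --- sesquiquadraticity of $(u,a)\mapsto u\cdot a$ for arbitrary nonzero $i$; $(u\cdot a)\cdot b=u\cdot ab$ for arbitrary distinct nonzero $|i|\ne|j|$; $(u\cdot k)\cdot a=u\cdot ka$; and the $K$-linearity closure relation --- follows from those same relations restricted to $|i|\in\{1,2\}$. The mechanism is exactly lemma \ref{act-cons} (and, for the underlying hermitian-group and ring parts, lemmas \ref{mul-cons}, \ref{pres-ring}, \ref{ring-struct}): given a generator $u\cdot a$ with $|i|\ge 3$, write $u=\sum^\cdot_s \gamma(w_s,x_s)\dotplus\phi(h)$ with $w_s\in\Delta^0_{\pm j}$, $x_s\in R_{\pm j,i}$, $h\in R_{ii}$ as above, expand using sesquiquadraticity and $(u\cdot a)\cdot b=u\cdot ab$, and verify by the associativity identities \ref{mul-ass-2}, \ref{ass-law-l}--\ref{ass-law-bal}, \ref{tri-ass}, \ref{dot-ass}, \ref{pi-dot}, and lemma \ref{tri-dot-2} that the result is independent of the chosen decomposition --- this is precisely the computation carried out in the proof of lemma \ref{act-cons}. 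The $K$-linearity relation is handled by the same decomposition trick together with \ref{dot-lin}, \ref{dot-lin-fil}, \ref{tri-lin}, \ref{str-lin-2} and the fourth defining relation applied in the distinguished range.

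I expect the main obstacle to be bookkeeping rather than conceptual: one has to ensure that when $|i|\ge 3$ is reduced to indices of absolute value $1$ or $2$, the auxiliary index $|k|=3$ used in the associativity laws \ref{ass-law-l}--\ref{ass-law-bal} (via lemma \ref{pres-ring}) is genuinely available and distinct from everything else in play, which is where $\ell\ge 4$ is silently needed, and that no spurious relation among the distinguished generators is introduced when two different reductions of the same $u\cdot a$ are compared. Both points are resolved by feeding the data into lemma \ref{act-cons} with $A_{ij}$ the relevant $R$-components and $Q_i$ the odd form groups $(R_{0i},R_{-i,i},\Delta^0_i)$ as in the remark preceding that lemma, so that the uniqueness clause of lemma \ref{act-cons} delivers both the well-definedness of the reduction and the absence of extra relations. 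Hence the presentation of $\Delta^0_0$ by the distinguished generators and the distinguished relations is the one claimed.
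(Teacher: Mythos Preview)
Your proposal is correct and follows essentially the same line as the paper: it too introduces the smaller presentation \(\Delta^{0\prime}_0\), invokes lemma~\ref{act-cons} to build the maps \(\Delta^0_i \times R_{i0} \to \Delta^{0\prime}_0\) for \(|i|\ge 3\), and then verifies the remaining associativity and \(K\)-linearity relations case by case (so your separate ``generation'' step is in fact subsumed by this construction, and the remark about \(\phi(R_{00})\) is unnecessary since \(\phi(H)\) comes for free in any odd form parameter). One correction: \(\ell\ge 4\) is \emph{not} silently needed --- only the standing associativity conditions \ref{ass-law-l}--\ref{ass-law-bal} assumed throughout the section are used, and the paper's auxiliary indices are always \(\pm 1\) or \(\pm 2\), so the argument already goes through for \(\ell = 3\).
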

\begin{proof}
    Let
    \(
        \Delta^{0 \prime}_0
    \) be the odd form parameter with presentation from the statement. By lemma \ref{act-cons} there are unique maps
    \(
        ({-}) \cdot ({=})
        \colon \Delta^0_i \times R_{i0}
        \to \Delta^{0 \prime}_0
    \) for \(|i| \geq 3\) such that
    \(
        (R_{0i}, R_{-i, i}, \Delta^0_i) \times R_{i0}
        \to (R_{00}, R_{00}, \Delta^{0 \prime}_0)
    \) are sesquiquadratic maps and
    \(
        (u \cdot a) \cdot b = u \cdot ab
    \) for either
    \(
        u \in \Delta^0_{\pm 1}
    \),
    \(
        a \in R_{\pm 1, i}
    \),
    \(
        b \in R_{i0}
    \) or
    \(
        u \in \Delta^0_i
    \),
    \(
        a \in R_{i, \pm 2}
    \),
    \(
        b \in R_{\pm 2, 0}
    \).

    Let us check that
    \(
        (u \cdot a) \cdot b = u \cdot ab
    \) in
    \(
        \Delta^{0 \prime}_0
    \) for
    \(
        u \in \Delta^0_i
    \),
    \(
        a \in R_{ij}
    \),
    \(
        b \in R_{j0}
    \), and distinct non-zero \(|i|\), \(|j|\). This identity may be checked on generators by lemma \ref{ofg-sq-ass}:
    \begin{itemize}

        \item If
        \(
            |i|, |j| \in \{ 1, 2 \}
        \), or \(|i| = 1\) and \(|j| \geq 3\), or \(|i| \geq 3\) and \(|j| = 2\), then this follows from the definitions.

        \item If \(|i| = 2\) and \(|j| \geq 3\), then we have
        \[
            (u \cdot xy) \cdot b
            = ((u \cdot x) \cdot y) \cdot b
            = (u \cdot x) \cdot yb
            = u \cdot xyb
        \]
        for
        \(
            x \in R_{i, \pm 1}
        \),
        \(
            y \in R_{\pm 1, j}
        \).

        \item If \(|i| \geq 3\) and \(|j| \neq 2\), then we have
        \[
            (u \cdot a) \cdot xy
            = ((u \cdot a) \cdot x) \cdot y
            = (u \cdot ax) \cdot y
            = u \cdot axy
        \]
        for
        \(
            x \in R_{j, \pm 2}
        \),
        \(
            y \in R_{\pm 2, 0}
        \).

    \end{itemize}

    The identity
    \(
        (u \cdot k) \cdot a = u \cdot ka
    \) for
    \(
        u \in \Delta^0_i
    \),
    \(
        a \in R_{i0}
    \), \(k \in K\),
    \(
        |i| \geq 3
    \) follows from the same lemma and
    \[
        (u \cdot k) \cdot xy
        = ((u \cdot k) \cdot x) \cdot y
        = (u \cdot kx) \cdot y
        = u \cdot kxy
    \]
    for
    \(
        x \in R_{i, \pm 1}
    \),
    \(
        y \in R_{\pm 1, 0}
    \).

    Finally, suppose that
    \(
        \sum^\cdot_{1 \leq s \leq N} v_s \cdot b_s
        \dotplus \phi(a)
        = \dot 0
    \) for some
    \(
        v_s \in \mathcal D_{i_s}
    \),
    \(
        b_s \in R_{i_s 0}
    \), \(i_s \neq 0\). For each \(s\) with
    \(
        |i_s| \geq 3
    \) choose a decomposition
    \(
        b_s = \sum_{1 \leq t \leq M_s} c_{st} d_{st}
    \) for some
    \(
        c_{st} \in R_{i_s j_{st}}
    \),
    \(
        d_{st} \in R_{j_{st} 0}
    \),
    \(
        |j_{st}| = 1
    \). We have
    \[
        \sum^\cdot_{|i_s| \in \{ 1, 2 \}}
            v_s \cdot b_s
        \dotplus \sum^\cdot_{|i_s| \geq 3}
            \sum^\cdot_t
                (v_s \cdot c_{st}) \cdot d_{st}
        \dotplus
        \phi\bigl(
            \sum_{\substack{|i_s| \geq 3\\ t < t'} }
                \inv{d_{st'}} \inv{c_{st'}}
                \rho(v_s) c_{st} d_{st}
            + a
        \bigr)
        = \dot 0,
    \]
    so
    \[
        \sum^\cdot_s kv_s \cdot b_s
        \dotplus \phi(ka)
        = \sum^\cdot_{|i_s| \in \{ 1, 2 \}}
            kv_s \cdot b_s
        \dotplus \sum^\cdot_{|i_s| \geq 3}
            \sum^\cdot_t
                k(v_s \cdot c_{st}) \cdot d_{st}
        \dotplus \phantom x
        \phi\bigl(
            \sum_{\substack{|i_s| \geq 3\\ t < t'} }
                k \inv{d_{st'}} \inv{c_{st'}}
                \rho(v_s) c_{st} d_{st}
            + ka
        \bigr)
        = \dot 0.
    \qedhere\]
\end{proof}

\begin{lemma} \label{form-struct}
    There are unique maps
    \(
        ({-}) \cdot ({=})
        \colon \Delta^0_i \times R_{ij}
        \to \Delta^0_j
    \) for \(i = 0\) and for \(|i| = |j|\) making
    \(
        (R_{0i}, R_{-i, i}, \Delta^0_i) \times R_{ij}
        \to (R_{0j}, R_{-j, j}, \Delta^0_j)
    \) sesquiquadratic maps satisfying associativity laws for all \(i\) and \(j\).
\end{lemma}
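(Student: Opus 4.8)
The argument runs in two stages, in analogy with the construction of the multiplication in Lemma \ref{ring-struct}: there Lemma \ref{mul-cons} extended the product to the new components \(R_{ij}\) with \(|i| = |j|\), and here Lemma \ref{act-cons} plays the same role for the action on the odd form parameters. Recall that \(({-}) \cdot ({=}) \colon \Delta^0_i \times R_{ij} \to \Delta^0_j\) is already given for \(0 \neq |i| \neq |j| \neq 0\) (it is part of the partial graded odd form algebra) and for \(i \neq 0\), \(j = 0\) (these are the defining generators of \(\Delta^0_0\)), so only the cases \(|i| = |j| \neq 0\) and \(i = 0\) remain; moreover, once \(\Delta^0_0 \times R_{0k} \to \Delta^0_k\) is available for \(k \neq 0\) the case \(i = j = 0\) follows from \(u \cdot (ab) := (u \cdot a) \cdot b\), so I may assume \(j \neq 0\) throughout. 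In every case the hermitian-group part \((\alpha, \beta)\) of the sesquiquadratic map to be built is already in hand — \(\alpha\) is the ring multiplication \(R_{0i} \times R_{ij} \to R_{0j}\) from Lemma \ref{ring-struct}, \(\beta(a, h, b) = \inv a h b \in R_{-j, j}\), and its axioms and associative laws are immediate consequences of associativity of \(R\) — so only the third component \(\gamma = ({-}) \cdot ({=})\) on the parameters has to be produced, and throughout, well-definedness as well as the associative laws \(\gamma_{jk}(\gamma_{ij}(u, a), b) = \gamma_{ik}(u, ab)\) will be checked on generators by Lemma \ref{ofg-sq-ass}.

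\emph{The case \(|i| = |j| \neq 0\).} Here I define \(u \cdot (ab) := (u \cdot a) \cdot b\) for \(u \in \Delta^0_i\), \(a \in R_{ik}\), \(b \in R_{kj}\) with \(k \neq 0\), \(|k| \neq |i|\); both applications of \(({-}) \cdot ({=})\) then fall under the already-available case \(0 \neq |i| \neq |k| \neq |j| \neq 0\). By Lemma \ref{ring-struct} such products generate \(R_{ij}\), and to see that the formula descends and yields a sesquiquadratic map I apply Lemma \ref{act-cons} to the composite abelian groups and odd form groups described in the paragraph preceding it (built from \(R_{ik} \oplus R_{i, -k}\), \(R_{kj} \oplus R_{-k, j}\) and the \(\Delta^0_k \dotoplus \Delta^0_{-k} \dotoplus \phi(R_{kk})\)): the hypothesis that \(\Delta^0_i\) is generated as an odd form parameter by the incoming \(\gamma\) is \ref{short-idem} (the generators \(R_{-i, j} * R_{ji}\) lie in \(\phi(R_{-i, i})\)), and all the associative laws required as hypotheses hold because \(R\) is associative (Lemma \ref{ring-struct}) and the original partial structure is associative. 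When \(\ell \geq 4\) there is always a fourth available absolute value, while for \(\ell = 3\), just as in Lemma \ref{pres-ring}, one first reduces \(R_{ij}\) to the generators with intermediate indices of absolute value \(2\) and \(3\), invoking the associativity conditions \ref{ass-law-l}--\ref{ass-law-bal}; the new associative laws then reduce to identities on these generators, which are instances of laws already established in Lemma \ref{ring-struct}.

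\emph{The case \(i = 0\).} Here Lemma \ref{act-cons} does not apply directly, because generating \(R_{0j}\) by products keeps the left index equal to \(0\) and so refers back to the very maps \(\Delta^0_0 \times R_{0m} \to \Delta^0_m\) that are being constructed. Instead I use the presentation of \(\Delta^0_0\) from Lemma \ref{pres-form}: for fixed \(a \in R_{0j}\) I define a map on the generators \(v \cdot b\) (with \(v \in \Delta^0_m\), \(b \in R_{m0}\), \(|m| \in \{1, 2\}\)) by \((v \cdot b) \mapsto v \cdot (ba)\), where \(v \cdot (ba)\) is the action \(\Delta^0_m \times R_{mj} \to \Delta^0_j\) — available from the previous stage if \(|m| = |j|\) and from the original data otherwise — and check that this assignment respects each defining relation of \(\Delta^0_0\): biadditivity and the sesquiquadratic identities for \(({-}) \cdot ({=}) \colon \Delta^0_m \times R_{m0} \to \Delta^0_0\), the associativity \((v \cdot b) \cdot c = v \cdot bc\), the identity \((v \cdot k) \cdot b = v \cdot kb\), and the \(K\)-linearity of relations, all of which follow from the sesquiquadraticity and associativity of the maps from the previous stage together with \(K\)-linearity of \(b \mapsto ba\). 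This yields \(\gamma(u, a) = u \cdot a\); the sesquiquadratic-map axioms linking \(\gamma\) to \((\alpha, \beta)\), and then the associative laws involving the new maps, follow on the generators \(v \cdot b\) by Lemma \ref{ofg-sq-ass}. Uniqueness in all cases is clear, since the generating sets used generate the groups in question.

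I expect the main obstacle to lie in this last stage: verifying that \((v \cdot b) \mapsto v \cdot (ba)\) is well-defined modulo the presentation of \(\Delta^0_0\). The delicate point is that every element of \(\Delta^0_0\) — in particular each \(\phi(h)\) for \(h \in R_{00}\) — must be rewritten as a combination of the chosen generators \(v \cdot b\), and one must confirm that distinct rewritings produce the same value, the \(\phi\)-corrections in the sesquiquadratic axioms making the comparison more involved than in the purely multiplicative situation of Lemma \ref{pres-ring}. A secondary nuisance is the index bookkeeping for \(\ell = 3\), where the associativity conditions are exactly what provides enough room; for \(\ell \geq 4\) they are automatic by Theorem \ref{ass-forced}.
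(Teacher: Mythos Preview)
Your outline is correct and uses the same two key tools as the paper --- Lemma~\ref{act-cons} for the \(|i|=|j|\) case and the presentation of \(\Delta^0_0\) from Lemma~\ref{pres-form} for the \(i=0\) case --- but you run the two stages in the opposite order. The paper first builds \(\Delta^0_0 \times R_{0i} \to \Delta^0_i\) and only afterwards treats \(|i|=|j|\) (including \(|i|=|j|=0\)) uniformly via Lemma~\ref{act-cons}. The point of that ordering is a small trick you missed: when constructing \(\Delta^0_0 \times R_{0i} \to \Delta^0_i\) the paper takes without loss of generality \(|i|=3\), so that the generators \(v \cdot b\) of \(\Delta^0_0\) from Lemma~\ref{pres-form} have \(|m|\in\{1,2\}\) and one \emph{never} needs the map \(\Delta^0_m \times R_{mj} \to \Delta^0_j\) with \(|m|=|j|\). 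This eliminates precisely the case split you flag (``available from the previous stage if \(|m|=|j|\)'') and with it the dependence of the \(i=0\) construction on the \(|i|=|j|\neq 0\) stage. Your route works, but it costs you that extra bookkeeping; the paper's route also lets the \(|i|=|j|=0\) case be absorbed into the single Lemma~\ref{act-cons} argument rather than tacked on at the end.
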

\begin{proof}
    Let us denote by
    \(
        \mathrm A[i, j, k]
    \) the associative law
    \(
        (u \cdot a) \cdot b = u \cdot ab
    \) for
    \(
        u \in \Delta^0_i
    \),
    \(
        a \in R_{ij}
    \),
    \(
        b \in R_{jk}
    \). It holds for distinct \(|i|\), \(|j|\), \(|k|\) such that \(|i|\) and \(|j|\) are non-zero. In general such law may be checked on generators by lemma \ref{ofg-sq-ass}.

    We construct the sesquiquadratic maps
    \(
        (R_{00}, R_{00}, \Delta^0_0) \times R_{0i}
        \to (R_{0i}, R_{-i, i}, \Delta^0_i)
    \) for \(i \neq 0\) satisfying
    \(
        \mathrm A[k, 0, i]
    \) for
    \(
        0 \neq |k| \neq |i|
    \). Without loss of generality, \(|i| = 3\). By lemmas \ref{ofg-sq-free}, \ref{ofg-sq-fact}, \ref{pres-form} there is unique such sesquiquadratic map satisfying
    \(
        \mathrm A[k, 0, i]
    \) for
    \(
        |k| \in \{ 1, 2 \}
    \). If \(|k| > 3\), then we use lemma \ref{ofg-sq-ass} and
    \[
        (u \cdot xy) \cdot a
        = ((u \cdot x) \cdot y) \cdot a
        = (u \cdot x) \cdot ya
        = u \cdot xya
    \]
    for
    \(
        u \in \Delta^0_k
    \),
    \(
        x \in R_{k, \pm 1}
    \),
    \(
        y \in R_{\pm 1, 0}
    \),
    \(
        a \in R_{0i}
    \).

    Now let us check
    \(
        \mathrm A[i, j, k]
    \) if \(|i|\), \(|j|\), \(|k|\) are distinct using lemma \ref{ofg-sq-ass}. The only remaining case is \(i = 0\), it follows from
    \[
        ((u \cdot a) \cdot b) \cdot c
        = (u \cdot ab) \cdot c
        = u \cdot abc
        = (u \cdot a) \cdot bc
    \]
    for
    \(
        u \in \Delta^0_l
    \),
    \(
        a \in R_{l0}
    \),
    \(
        b \in R_{0j}
    \),
    \(
        c \in R_{jk}
    \), where
    \(
        |l| \notin \{ 0, |j|, |k| \}
    \). Here we use the corollary
    \[
        \Delta^0_0 = \langle
            \phi(R_{00}),
            \Delta^0_l \cdot R_{l0},
            \Delta^0_{-l} \cdot R_{-l, 0}
        \rangle
    \]
    of lemma \ref{pres-form} and
    \(
        \mathrm A[1, 2, 0]
    \).

    We are ready to construct the sesquiquadratic maps
    \(
        (R_{0i}, R_{-i, i}, \Delta^0_i) \times R_{ij}
        \to (R_{0j}, R_{-j, j}, \Delta^0_j)
    \) for \(|i| = |j|\). Without loss of generality,
    \(
        |i| \in \{ 0, 1 \}
    \). By lemma \ref{act-cons} there is unique such map satisfying
    \(
        \mathrm A[\pm 2, i, j]
    \) and
    \(
        \mathrm A[i, \pm 3, j]
    \). The identity
    \(
        \mathrm A[k, i, j]
    \) for
    \(
        |k| \notin \{ |i|, 2 \}
    \) follows from
    \[
        (u \cdot ab) \cdot c
        = ((u \cdot a) \cdot b) \cdot c
        = (u \cdot a) \cdot bc
        = u \cdot abc
    \]
    for
    \(
        u \in \Delta^0_k
    \),
    \(
        a \in R_{k, \pm 2}
    \),
    \(
        b \in R_{\pm 2, i}
    \),
    \(
        c \in R_{ij}
    \). Similarly, the identity
    \(
        \mathrm A[i, k, j]
    \) for
    \(
        |k| \notin \{ |i|, 3 \}
    \) follows from
    \[
        (u \cdot ab) \cdot c
        = ((u \cdot a) \cdot b) \cdot c
        = (u \cdot a) \cdot bc
        = u \cdot abc
    \]
    for
    \(
        u \in \Delta^0_i
    \),
    \(
        a \in R_{i, \pm 3}
    \),
    \(
        b \in R_{\pm 3, k}
    \),
    \(
        c \in R_{kj}
    \).

    The remaining associative law
    \(
        \mathrm A[i, j, k]
    \) for \(|i| = |j|\) follows from
    \[
        (u \cdot ab) \cdot c
        = ((u \cdot a) \cdot b) \cdot c
        = (u \cdot a) \cdot bc
        = u \cdot abc
    \]
    for
    \(
        u \in \Delta^0_i
    \),
    \(
        a \in R_{im}
    \),
    \(
        b \in R_{mj}
    \),
    \(
        c \in R_{jk}
    \), and some
    \(
        m \notin \{ 0, |i|, |k| \}
    \).
\end{proof}

\begin{lemma} \label{peirce-ofr}
Let \(R_{ij}\) be \(K\)-modules for
\(
    -\ell \leq i, j \leq \ell
\) with multiplication maps
\(
    R_{ij} \times R_{jk} \to R_{ik}
\) and isomorphisms
\(
    \inv{(-)} \colon R_{ij} \to R_{-j, -i}
\) making
\(
    R = \bigoplus_{i, j = -\ell}^\ell R_{ij}
\) an associative \(K\)-algebra with involution. Let also
\(
    (R_{0i}, R_{-i, i}, \Delta^0_i)
\) be odd form groups for
\(
    -\ell \leq i \leq \ell
\) (with
\(
    \langle a, b \rangle = \inv ab
\)) and
\begin{align*}
    \bigl(
        (a, b) \mapsto ab,
        (a, b, c) \mapsto \inv abc,
        (u, a) \mapsto u \cdot a
    \bigr)
    &\colon (R_{0i}, R_{-i, i}, \Delta^0_i)
    \times R_{ij}
    \to (R_{0j}, R_{-j, j}, \Delta^0_j);
    \\
    \bigl(
        (a, k) \mapsto ak,
        (k, a, k') \mapsto akk',
        (u, k) \mapsto u \cdot k
    \bigr)
    &\colon (R_{0i}, R_{-i, i}, \Delta^0_i) \times K
    \to (R_{0i}, R_{-i, i}, \Delta^0_i)
\end{align*}
be sesquiquadratic maps satisfying all associativity laws. Suppose in addition that
\(
    \Delta^0_i
\) are \(2\)-step nilpotent \(K\)-modules with the nilpotent filtrations
\(
    \phi(R) \leq \mathcal D_i \leq \Ker(\pi)
\),
\(
    \mathcal D_i \cdot R_{ij} \subseteq \mathcal D_j
\),
\(
    \phi(ka) = k \phi(a)
\),
\(
    \rho(kv) = k \rho(v)
\),
\(
    kv \cdot a = k(v \cdot a)
\) for
\(
    v \in \mathcal D_i
\). Then
\(
    (R, \Delta, \mathcal D)
\) is a graded odd form \(K\)-algebra, where
\begin{align*}
    \Delta
    &= \bigoplus_i^\cdot \Delta^0_i
    \dotoplus \bigoplus_{i \neq 0;\, j}^\cdot
        q_i \cdot R_{ij}
    \dotoplus \bigoplus_{i + j > 0}^\cdot
        \phi(R_{ij});
    &
    \mathcal D
    &= \bigoplus_i^\cdot \mathcal D_i
    \dotoplus \bigoplus_{i + j > 0}^\cdot
        \phi(R_{ij}).
\end{align*}
\end{lemma}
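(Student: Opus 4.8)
\emph{Building $\Delta$.} The plan is to assemble the odd form parameter $\Delta$ from the local data using the universal constructions of lemmas \ref{ofg-free}--\ref{ofg-sq-ass}, and then to read off the axioms of a graded odd form $K$-algebra component by component, every verification reducing to a check on generators. First I would apply lemma \ref{ofg-free} to the hermitian group $(R, R)$ with the generating family consisting of (generators of the) $\Delta^0_i$, with the given $\pi$ and $\rho$ regarded as valued in $R$, together with fresh symbols $q_i \cdot a$ for $i \neq 0$ and $a \in R_{ij}$, where $\pi(q_i \cdot a) = a$ and $\rho(q_i \cdot a) = 0$. The compatibility condition of lemma \ref{ofg-free} is the odd form group axiom $\rho(v) + \langle \pi(v), \pi(v) \rangle + \inv{\rho(v)} = 0$ on the $\Delta^0_i$, while on $q_i \cdot a$ it reads $\inv a a = 0$, which holds because $\inv a \in R_{-j, -i}$ and $i \neq 0$ annihilate the grading-zero product $R_{-j,-i} R_{ij}$. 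Then I would apply lemma \ref{ofg-fact} to impose the relations of each $\Delta^0_i$, the biadditivity of $a \mapsto q_i \cdot a$, and the identification of the built-in $\phi$ with the given $\phi$ on $R_{-i,i}$ and on $R_{00}$; each such relation is between elements with equal $\pi$- and $\rho$-values, so the lemma applies. A normal-form computation — equivalently, checking that the stated direct sum with its evident componentwise operations is a model for the universal object — then shows that $\Delta = \bigoplus_i^\cdot \Delta^0_i \dotoplus \bigoplus_{i \neq 0, j}^\cdot q_i \cdot R_{ij} \dotoplus \bigoplus_{i+j>0}^\cdot \phi(R_{ij})$ and that $(R, R, \Delta)$ is an odd form group with $\pi \colon \Delta^i_j \to R_{ij}$ an isomorphism, $\rho(\Delta^i_j) = 0$ for $i \neq 0$, $\rho|_{\Delta^0_i}$ the given $\rho$, $\pi(\Delta^0_i) \subseteq R_{0i}$, and $\phi(R_{-i,i}) \subseteq \mathcal D_i$.

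\emph{The action of $R \rtimes K$.} Next I would extend the given sesquiquadratic maps to one sesquiquadratic map $(R, R, \Delta) \times (R \rtimes K) \to (R, R, \Delta)$ whose $\alpha, \beta$ are $(m, b) \mapsto mb$ and $(b, h, b') \mapsto \inv b h b'$ in $R$ and whose $\gamma$-component is defined on generators by the given action on the $\Delta^0_i$, by $(q_i \cdot a) \cdot b = q_i \cdot (ab)$ on the $q_i \cdot R_{ij}$ (with $ab$ read in $R$, hence $\dot 0$ whenever the grading kills the product), and by $\phi(h) \cdot b = \phi(\inv b h b)$ on the $\phi(R_{ij})$. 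Existence and uniqueness follow from lemmas \ref{ofg-sq-free} and \ref{ofg-sq-fact} once the defining relations of $\Delta$ are seen to be respected, which uses $\mathcal D_i \cdot R_{ij} \subseteq \mathcal D_j$, $\phi(ka) = k\phi(a)$, $\rho(kv) = k\rho(v)$, $kv \cdot a = k(v \cdot a)$ and the assumed (bi)additivity. Then $u \cdot 1 = u$ and the associative law $(u \cdot b) \cdot b' = u \cdot bb'$ for $b, b' \in R \rtimes K$ are checked by lemma \ref{ofg-sq-ass}, reducing to generators of $\Delta$ and to $b, b'$ in the various $R_{ij}$ and in $K$, using the assumed associativity of $R$ and of the given sesquiquadratic maps.

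\emph{The Peirce decomposition.} It then remains to collect the Peirce axioms: the conditions on the $R_{ij}$ are hypotheses; $K^\bullet$-invariance of the $\Delta^i_j$, the isomorphism $\pi$ on $\Delta^i_j$ and $\rho(\Delta^i_j) = 0$ for $i \neq 0$, $\pi(\Delta^0_i) \subseteq R_{0i}$, $\rho(\Delta^0_i) \subseteq R_{-i,i}$, and $\phi(R_{-i,i}) \subseteq \mathcal D_i$ come from the construction of $\Delta$; $\Delta^i_j \cdot R_{kl} = \dot 0$ for $j \neq k$ and $\Delta^i_j \cdot R_{jk} \subseteq \Delta^i_k$ are built into the definition of the action; and $\mathcal D_i = \mathcal D \cap \Delta^0_i$, $\mathcal D = \bigoplus_i^\cdot \mathcal D_i \dotoplus \bigoplus_{i+j>0}^\cdot \phi(R_{ij})$, the $K$-module structures on the $\mathcal D_i$, $\mathcal D \cdot (R \rtimes K) \subseteq \mathcal D$, $v \cdot k = k^2 v$, $\phi(ka) = k\phi(a)$, $\rho(kv) = k\rho(v)$, $kv \cdot b = k(v \cdot b)$ follow from the hypotheses on the $(\Delta^0_i, \mathcal D_i)$ together with the construction.

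\emph{Main obstacle.} I expect the real work to lie in the two gluing steps: showing that the universal quotient defining $\Delta$ does not collapse (so that the explicit direct sum genuinely carries the operations) and that the $R \rtimes K$-action extends consistently and associatively over all of $\Delta$. The delicate interplay is between the old summands $\Delta^0_i$, whose elements pick up $\rho$-correction $\phi$-terms under both $\dotplus$ and the action, and the new summands $q_i \cdot R_{ij}$, on which the action is honest multiplication in $R$; tracking these correction terms through the associative law is precisely where the assumed associativity of $R$ and of the sesquiquadratic maps — and, upstream, the associativity conditions on $(R_{ij}, \Delta^0_i)_{ij}$ — are essential, as in the computation in the proof of lemma \ref{act-cons}.
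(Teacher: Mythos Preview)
Your proposal is correct and follows essentially the same approach as the paper: build $(R,R,\Delta)$ via lemmas \ref{ofg-free} and \ref{ofg-fact}, construct the sesquiquadratic action of $R\rtimes K$ via lemmas \ref{ofg-sq-free} and \ref{ofg-sq-fact}, verify associativity by lemma \ref{ofg-sq-ass}, and observe that $\mathcal D$ is an augmentation. The paper's own proof is three sentences long and records exactly these invocations; you have simply unpacked the generator-and-relation bookkeeping that those lemmas absorb. One small remark: in your final paragraph you gesture at the associativity conditions \ref{ass-law-l}--\ref{ass-law-bal} on the partial data, but by the time lemma \ref{peirce-ofr} is invoked the full associative $K$-algebra $R$ is already part of the hypotheses, so those conditions play no further role here.
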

\begin{proof}
By lemmas \ref{ofg-free} and \ref{ofg-fact},
\(
    (R, R, \Delta)
\) is an odd form group. The sesquiquadratic map
\(
    (R, R, \Delta) \times (R \rtimes K)
    \to (R, R, \Delta)
\) exists by lemmas \ref{ofg-sq-free} and \ref{ofg-sq-fact}, it satisfies the associativity law by lemma \ref{ofg-sq-ass}. Clearly, \(\mathcal D\) is an augmentation of
\(
    (R, \Delta)
\).
\end{proof}

\begin{theorem} \label{coord-cryst}
    Let \(G\) be a group with a family of subgroups
    \(
        G_\alpha \leq G
    \) indexed by roots of a root system of type
    \(
        \mathsf{BC}_\ell
    \) for
    \(
        \ell \geq 3
    \). Suppose that \ref{comm-rel}--\ref{homo-comm} hold for a commutative unital ring \(K\), as well as the associativity conditions \ref{ass-law-l}--\ref{ass-law-bal} (e.g. if
    \(
        \ell \geq 4
    \)). Then there is a graded odd form \(K\)-algebra
    \(
        (R, \Delta)
    \) satisfying the idempotency conditions and a surjective homomorphism
    \(
        \stunit(R, \Delta) \to G
    \) inducing isomorphisms between root subgroups. Moreover, any partial graded odd form \(K\)-algebra satisfying \ref{short-idem}, \ref{long-idem}, and the associativity conditions arises from such a group \(G\).
\end{theorem}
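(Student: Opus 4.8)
The plan is to reduce the statement to the constructions carried out in Sections~5 and~6 and to track how the Steinberg group meshes with them. Given $G$ with subgroups $G_\alpha$ satisfying \ref{comm-rel}--\ref{homo-comm}, the recipe of Sections~3 and~4 produces an associated partial graded odd form $K$-algebra $(R_{ij},\Delta^0_i)_{ij}$: one fixes group isomorphisms $T_{ij},T_i$ onto the root subgroups, reads off the operations $ab$, $a*b$, $u\circ v$, $u\triangleleft a$, $u\cdot a$ from the commutator relations \ref{st-a2-ob}--\ref{st-b2-ob} together with \ref{homo-comm} for the $K$-action, obtains the identities \ref{mul-cen}--\ref{dot-ass} from Lemma~\ref{unip-subgr}, and obtains \ref{long-idem}, \ref{short-idem} from Lemma~\ref{idem-par-ofr}. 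Since the associativity conditions \ref{ass-law-l}--\ref{ass-law-bal} are assumed (and hold automatically for $\ell\ge4$ by Theorem~\ref{ass-forced}), all hypotheses of Sections~5--6 are in force; conversely, for the last assertion of the theorem one simply starts from a given such partial $K$-algebra $P$.

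First I would run Lemmas~\ref{ring-struct}, \ref{form-struct} and~\ref{peirce-ofr} in turn: Lemma~\ref{ring-struct} completes $(R_{ij})$ to a graded associative $K$-algebra $R$ with involution (adding the components with a zero index and with $|i|=|j|$, together with the elements $\rho(u)$), Lemma~\ref{form-struct} produces the $R\rtimes K$-action, and, incorporating the presentation of $\Delta^0_0$ and the homomorphisms $\phi\colon R_{-i,i}\to\mathcal D_i$ built in Section~6, Lemma~\ref{peirce-ofr} assembles a graded odd form $K$-algebra $(R,\Delta,\mathcal D)$ with a Peirce decomposition. The idempotency conditions for $(R,\Delta)$ then follow: the first is the closing assertion of Lemma~\ref{ring-struct}; the second is \ref{short-idem} for $i\ne0$ (via $\phi(R_{-i,i})=\mathcal D_i^{\mathrm{min}}$) and the corollary of Lemma~\ref{pres-form} for $i=0$; and the short exact sequences $\dot 0\to\mathcal D_i\to\Delta^0_i\xrightarrow{\pi}R_{0i}\to 0$ hold by definition of $R_{0i}$ when $i\ne0$ and, for $i=0$, by checking that $ab\mapsto[u\cdot b]$ (any $u\in\Delta^0_k$ with $\pi(u)=a$) is a well-defined two-sided inverse to $\pi\colon\Delta^0_0/\mathcal D_0\to R_{00}$, well-definedness coming from \ref{dot-fil} and the module relations in the presentation of $\Delta^0_0$.

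For the homomorphism, observe that under the identifications $ab=$ ring product, $a*b=\phi(ab)$, $u\circ v=\inv{\pi(u)}\pi(v)$, $u\triangleleft a=\rho(u)a$ supplied by Lemma~\ref{ring-struct} and Section~6, the defining Steinberg relations of $\stunit(R,\Delta)$ become exactly the statements that $T_{ij}$, $T_i$ are group isomorphisms together with \ref{st-a2-ob}--\ref{st-b2-ob} and the commutator relations of the form $[\,\cdot\,,\,\cdot\,]=1$, all valid in $G$ by construction and by \ref{comm-rel}; here one uses that $\stunit(R,\Delta)$ has no generators $X_{ij}(a)$ with $|i|=|j|$ or with a zero index. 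Hence $X_{ij}(a)\mapsto T_{ij}(a)$, $X_i(u)\mapsto T_i(u)$ extends to a homomorphism $\psi\colon\stunit(R,\Delta)\to G$, surjective by \ref{elementary}; and the composite $R_{ij}\xrightarrow{X_{ij}}X_{ij}(R_{ij})\xrightarrow{\psi}G_{\mathrm e_j-\mathrm e_i}$ equals the isomorphism $T_{ij}$ (and likewise with $X_i$, $T_i$ and $T_i|_{\mathcal D_i}$), so $\psi$ carries each root subgroup of $\stunit(R,\Delta)$ isomorphically onto the corresponding root subgroup of $G$. For the converse, given $P$ with \ref{long-idem}, \ref{short-idem} and the associativity conditions, take $G=\stunit(R,\Delta)$ for the $(R,\Delta)$ just constructed from $P$; it satisfies \ref{comm-rel}--\ref{homo-comm} by the remarks of Sections~2--4, and its associated partial graded odd form $K$-algebra is canonically $P$ once one knows the generators $X_{ij}$, $X_i$ are injective — which follows by composing the canonical homomorphism $\stunit(R,\Delta)\to\unit(R,\Delta)$ with the projections of $\Delta$ onto the Peirce summands $q_i\cdot R_{ij}$ and $\Delta^0_i$ — since the reconstructed operations are then forced by the Steinberg relations to coincide with those of $P$.

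The substantive work of the theorem resides in Lemmas~\ref{ring-struct}--\ref{peirce-ofr}, taken as given here; the rest is organizational, and the points that genuinely need care are the verification of the idempotency conditions — above all the exactness $\mathcal D_0=\Ker(\pi\colon\Delta^0_0\to R_{00})$, which is not a formal consequence of the presentation of $\Delta^0_0$ — and the faithfulness of the Steinberg generators, which is what upgrades the ``moreover'' clause from a surjection to an actual realization of $P$.
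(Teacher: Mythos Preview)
Your overall plan matches the paper's: invoke Lemmas~\ref{ring-struct}, \ref{form-struct}, \ref{peirce-ofr} and then read off the Steinberg homomorphism. But you overstate what Lemma~\ref{form-struct} delivers. That lemma constructs only the sesquiquadratic maps \((R_{0i},R_{-i,i},\Delta^0_i)\times R_{ij}\to(R_{0j},R_{-j,j},\Delta^0_j)\) and the associativity laws \emph{among these}. To apply Lemma~\ref{peirce-ofr} you also need the sesquiquadratic \(K\)-action and, crucially, the \emph{mixed} associativity
\[
    (u\cdot a)\cdot k=(u\cdot k)\cdot a=u\cdot ka
    \quad\text{for }u\in\Delta^0_i,\ a\in R_{ij},\ k\in K,
\]
which is not supplied by Lemma~\ref{form-struct} nor by the earlier construction of the \(K\)-action on \(\Delta^0_0\). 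The paper's proof singles out precisely this as ``the only non-trivial remaining condition'': for \(|i|\notin\{0,|j|\}\) it is \ref{dot-lin}, and for the remaining cases one reduces to that by writing a generator of \(\Delta^0_i\) as \(u'\cdot a'\) with \(u'\in\Delta^0_k\), \(a'\in R_{ki}\), \(|k|\notin\{0,|i|,|j|\}\), and using the \(R\)-associativity from Lemma~\ref{form-struct}. Your claim that ``Lemma~\ref{form-struct} produces the \(R\rtimes K\)-action'' elides exactly this step, so as written the hypotheses of Lemma~\ref{peirce-ofr} are not verified.

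Conversely, you devote attention to points the paper treats as routine: the idempotency conditions (including the exactness \(\mathcal D_0=\Ker(\pi)\)) and the injectivity of the Steinberg generators. These are legitimate things to track, and your remarks on them are reasonable, but in the paper's own proof they are not discussed at all---the single paragraph is spent entirely on the \(K\)-\(R\) compatibility above. So your emphasis is inverted relative to the paper: you flag as delicate what the author regards as immediate from the constructions, and you pass over the one verification the author considers worth writing out.
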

\begin{proof}
    We are going to apply lemmas \ref{ring-struct}, \ref{form-struct}, \ref{peirce-ofr}. The only non-trivial remaining condition is the associativity law
    \(
        (u \cdot a) \cdot k
        = (u \cdot k) \cdot a
        = u \cdot ka
    \) for
    \(
        u \in \Delta^0_i
    \), \(k \in K\),
    \(
        a \in R_{ij}
    \). If
    \(
        |i| \notin \{ 0, |j| \}
    \), then such identity is already known. Otherwise this follows from
    \begin{align*}
        ((u \cdot a) \cdot b) \cdot k
        = (u \cdot ab) \cdot k
        &= u \cdot kab;
        \\
        ((u \cdot a) \cdot k) \cdot b
        = (u \cdot ka) \cdot b
        &= u \cdot kab;
        \\
        (u \cdot a) \cdot kb
        &= u \cdot kab
    \end{align*}
    for
    \(
        u \in \Delta^0_k
    \), \(k \in K\),
    \(
        a \in R_{ki}
    \),
    \(
        b \in R_{ij}
    \),
    \(
        |k| \notin \{ 0, |i|, |j| \}
    \).
\end{proof}

\section{Existence of action of \(\mathbb Z\)}

In this section we use the identities \ref{mul-cen}--\ref{dot-ass}, the conditions \ref{long-idem}, \ref{short-idem}, and lemma \ref{idem-par-ofr} without explicit references.

It turns out that not every partial graded odd form ring (constructed by a group with subgroups satisfying \ref{comm-rel}--\ref{short-gen}) is a partial graded odd form \(\mathbb Z\)-algebra.

\begin{example}
    Let \(A\) be the non-unital commutative ring generated by \(x_i\) for \(i \geq 0\) with the relations
    \(
        x_0 x_i = 0
    \),
    \(
        x_{i + 1}^2 = x_i
    \). It is an idempotent ring (i.e. \(A = \langle AA \rangle\)) and \(x_0 A = 0\). Since \(A\) as an abelian group is free with the basis
    \(
        \{
            x_0^\alpha
        \mid
            \alpha \in \mathbb Z[1 / 2],
            0 < \alpha \leq 1
        \}
    \), \(nx_0 \neq 0\) for all
    \(
        n \in \mathbb Z \setminus \{ 0 \}
    \). Let
    \(
        (R, \Delta)
    \) be the symplectic odd form algebra over \(A\) of rank
    \(
        \ell \geq 4
    \) constructed in \cite[\S 2.3]{thesis} and \cite[\S 5]{classic-ofa}, i.e. it is the graded odd form
    \(
        (A \rtimes \mathbb Z)
    \)-algebra with
    \begin{align*}
        R_{ij} &= A e_{ij} \text{ for } ij \neq 0;
        &
        e_{ij} e_{jk} &= e_{ik};
        \\
        \Delta^0_i &= \mathcal D_i = A v_i
        \text{ for } i \neq 0;
        &
        \inv{e_{ij}} &= \eps_i \eps_j e_{-j, -i};
        \\
        R_{i0} &= \Delta^0_0 = \dot 0;
        &
        \phi(e_{-i, i}) &= 2 v_i;
        \\
        v_i \cdot e_{ij} &= \eps_i \eps_j v_j;
        &
        \rho(v_i) &= e_{-i, i};
    \end{align*}
    where
    \(
        \eps_i = 1
    \) for \(i > 0\) and
    \(
        \eps_i = -1
    \) for \(i < 0\). We consider the Peirce decomposition of
    \(
        (R, \Delta)
    \) of rank \(\ell - 1\) obtained from the standard one by ``eliminating'' the root \(\mathrm e_1\), i.e. let
    \begin{align*}
        R_{ij} &= A e_{ij};
        &
        R_{00}
        &= Ae_{11}
        \oplus Ae_{-1, 1}
        \oplus Ae_{1, -1}
        \oplus Ae_{-1, -1};
        \\
        R_{i0} &= Ae_{i1} \oplus Ae_{i, -1};
        &
        \Delta^i_0
        &= q_i \cdot Ae_{i1}
        \dotoplus q_i \cdot Ae_{i, -1};
        \\
        \Delta^i_j &= q_i \cdot Ae_{ij};
        &
        \Delta^0_i
        &= Av_i
        \dotoplus q_1 \cdot Ae_{1i}
        \dotoplus q_{-1} \cdot Ae_{-1, i};
        \\
        \Delta^0_0
        &= q_1 \cdot Ae_{11}
        \dotoplus q_1 \cdot Ae_{1, -1}
        \dotoplus q_{-1} \cdot Ae_{-1, 1}
        \dotoplus q_{-1} \cdot Ae_{-1, -1}
        \dotoplus \phi(Ae_{11})
        \dotoplus Av_1
        \dotoplus Av_{-1}\mkern-500mu
    \end{align*}
    for
    \(
        2 \leq |i|, |j| \leq \ell
    \).

    Let
    \[
        \Gamma = \{
            nx_0 v_2
            \dotplus q_1 \cdot nx_0 e_{12}
            \dotplus q_{-1} \cdot nx_0 e_{-1, 2}
        \mid
            n \in \mathbb Z
        \} \leq \Delta^0_2,
    \]
    it is a central subgroup. Then the commutator maps between the root subgroups of the Steinberg group
    \(
        \stunit(R, \Delta)
    \) factor through
    \(
        \Delta^0_2 / \Gamma
    \) unlike the operation
    \(
        (-) \cdot (-1)
    \) since
    \[
        (
            x_0 v_2
            \dotplus q_1 \cdot x_0 e_{12}
            \dotplus q_{-1} \cdot x_0 e_{-1, 2}
        ) \cdot (-1)
        = x_0 v_2
        \dotminus q_1 \cdot x_0 e_{12}
        \dotminus q_{-1} \cdot x_0 e_{-1, 2}
        \notin \Gamma.
    \]
    It follows that
    \(
        \stunit(R, \Delta) / X_2(\Gamma)
    \) satisfies \ref{comm-rel}--\ref{short-gen} (the condition \ref{non-deg} follows from \cite[lemma 11]{thesis}), but does not satisfy \ref{bc-comm-rel}--\ref{homo-comm} for any choice of
    \(
        \mathcal D_i
    \) and
    \(
        K = \mathbb Z
    \).
\end{example}

\begin{lemma} \label{idem-cond}
    Let
    \(
        (R_{ij}, \Delta^0_i)_{ij}
    \) be a partial graded odd form ring satisfying \ref{long-idem} and \ref{short-idem} such that for every \(i \neq 0\) there is a group homomorphism
    \(
        ({-}) \cdot (-1)
        \colon \Delta^0_i \to \Delta^0_i
    \) satisfying
    \(
        (a * b) \cdot (-1) = a * b
    \) and
    \(
        (u \cdot b) \cdot (-1) = u \cdot (-b)
    \) for
    \(
        a \in R_{-i, j}
    \),
    \(
        b \in R_{ji}
    \),
    \(
        u \in \Delta^0_j
    \), and distinct non-zero \(|i|\), \(|j|\). Then
    \(
        (R_{ij}, \Delta^0_i)_{ij}
    \) has a structure of a partial graded odd form \(\mathbb Z\)-algebra with the smallest augmentation
    \(
        \mathcal D^{\mathrm{min}}_i
    \) from lemma \ref{idem-par-ofr} and the largest one
    \begin{align*}
        \mathcal D^{\mathrm{max}}_i &= \{
            u \in \Delta^0_i
        \mid
            u = u \cdot (-1),
            u \circ \Delta^0_j = 0
            \text{ for all }
            j \notin \{ 0, i, -i \}
        \} \\
        &= \{
            u \in \Delta^0_i
        \mid
            u = u \cdot (-1),
            u \circ \Delta^0_{j_*}
            = u \circ \Delta^0_{-j_*}
            = 0
        \}
    \end{align*}
    for any fixed
    \(
        j_* \notin \{ 0, i, -i \}
    \). The action of the multiplicative monoid
    \(
        \mathbb Z^\bullet
    \) on
    \(
        \Delta^0_i
    \) is uniquely determined.
\end{lemma}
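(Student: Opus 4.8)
The plan is to extract the whole $\mathbb Z^\bullet$-action from the given homomorphisms $(-)\cdot(-1)$, and to observe that the two hypotheses on them are exactly what makes that action compatible with the odd form operations. Write $\iota_i$ for $(-)\cdot(-1)\colon\Delta^0_i\to\Delta^0_i$. Since $\iota_i$ is a homomorphism fixing every generator $a*b$ of $\Delta^0_i$ and sending every generator $v\cdot b$ (with $v\in\Delta^0_k$, $|k|\neq|i|$, $b\in R_{ki}$) to $v\cdot(-b)$, and these generate $\Delta^0_i$ by \ref{short-idem}, one gets at once that $\iota_i$ restricts to the identity on $\mathcal D^{\mathrm{min}}_i=\langle R_{-i,j}*R_{ji}\rangle$ and that $\iota_i^2=\id$. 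Put $\tau_i(u)=u\dotplus\iota_i(u)$. The first real point is that $\tau_i(u)\in\mathcal D^{\mathrm{min}}_i$ for all $u$: one has $\tau_i(u\dotplus v)=\tau_i(u)\dotplus\tau_i(v)\dotplus[u,v]$ with $[u,v]\in[\Delta^0_i,\Delta^0_i]\subseteq\mathcal D^{\mathrm{min}}_i$ by lemma \ref{idem-par-ofr}, so $\{u:\tau_i(u)\in\mathcal D^{\mathrm{min}}_i\}$ is a subgroup, and on generators $\tau_i(a*b)=2(a*b)$, while $\tau_i(v\cdot b)=(v\cdot b)\dotplus(v\cdot(-b))=\inv b*(v\triangleleft b)$ by \ref{dot-dis-r} (and $u\cdot 0=\dot 0$), both in $\mathcal D^{\mathrm{min}}_i$. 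As $\mathcal D^{\mathrm{min}}_i$ is central this also yields $[u,\iota_i(u)]=\dot 0$ and $\tau_i\circ\iota_i=\tau_i$.

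Next I would build the action. The $2$-step nilpotent $\mathbb Z$-module axioms force $u\cdot 1=u$, $u\cdot(-1)=\iota_i(u)$, $u\cdot(k+1)=u\cdot k\dotplus k\,\tau_i(u)\dotplus u$ and multiplicativity, hence $u\cdot n=\bigl(\underbrace{u\dotplus\cdots\dotplus u}_{n}\bigr)\dotplus\binom n2\tau_i(u)$ for $n\ge 0$ and $u\cdot n=\iota_i\bigl(u\cdot(-n)\bigr)$ for $n<0$; this gives uniqueness. That these formulas do satisfy the axioms is a routine check: that each $\cdot n$ is a group endomorphism and $[u\cdot k,v\cdot k']=kk'[u,v]$ follow from biadditivity of commutators in a $2$-step nilpotent group together with centrality of $\tau_i(u)$; the law $u\cdot(k+k')=u\cdot k\dotplus kk'\tau_i(u)\dotplus u\cdot k'$ and the identity $v\cdot k=k^2v$ on $\mathcal D^{\mathrm{min}}_i$ reduce to binomial identities like $\binom k2+kk'+\binom{k'}2=\binom{k+k'}2$; and multiplicativity across signs uses that $\iota_i$ commutes with each $\cdot k$, which is immediate from $\iota_i^2=\id$ and $\tau_i\circ\iota_i=\tau_i$.

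It remains to verify \ref{cir-fil}--\ref{dot-lin-fil} with $\mathcal D_i=\mathcal D^{\mathrm{min}}_i$. Here \ref{cir-fil} and \ref{dot-fil} are part of lemma \ref{idem-par-ofr}, \ref{fil-str} is the definition of $\mathcal D^{\mathrm{min}}_i$, and \ref{inv-lin}, \ref{mul-lin}, \ref{str-lin}, \ref{tri-lin-fil}, \ref{dot-lin-fil} are biadditivity of the operations involved between abelian groups (recall $R_{ij}$ is abelian by lemma \ref{idem-par-ofr}; for $\triangleleft$ restricted to $\mathcal D^{\mathrm{min}}_i\times R_{ij}$ one first gets biadditivity from \ref{tri-dis-l}, \ref{tri-dis-r}, \ref{cir-fil}). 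The substance is in \ref{cir-lin}, \ref{tri-lin}, \ref{dot-lin}, which tie the action just built to $\circ$, $\triangleleft$, $\cdot$, and for these I would first prove the three identities $\iota_i(u)\circ v=-(u\circ v)$, $\iota_i(u)\cdot b=u\cdot(-b)$ (for $b\in R_{ij}$), and $\iota_i(u)\triangleleft a=u\triangleleft a$. The first follows from $\iota_i(u)\dotplus u=\tau_i(u)\in\mathcal D^{\mathrm{min}}_i$ via \ref{cir-dis-l} and \ref{cir-fil}; the second because both sides are homomorphisms in $u$ by \ref{dot-dis-l} and agree on the generators of \ref{short-idem} (using \ref{dot-ass} on $v\cdot c$ and \ref{dot-str} on $a*b$); the third because, as functions of $u$, the two sides have the same associated biadditive form (computed from \ref{tri-dis-l}, \ref{cir-dot} and the first two identities), so their difference is additive, and it vanishes because it vanishes on the generators (by \ref{tri-str}, \ref{inv-tri}, or lemma \ref{tri-dot-2}). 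Substituting the closed formula for $u\cdot n$ into these identities, and using $\mathcal D^{\mathrm{min}}_i\circ\Delta^0_i=0$ and $\mathcal D^{\mathrm{min}}_i\cdot R_{ij}\subseteq\mathcal D^{\mathrm{min}}_j$, one gets \ref{cir-lin}, \ref{tri-lin}, \ref{dot-lin}. This finishes the case of the smallest augmentation.

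For the largest augmentation, the two descriptions of $\mathcal D^{\mathrm{max}}_i$ coincide: the inclusion into the second is trivial, and if $u\cdot(-1)=u$ and $u\circ\Delta^0_{\pm j_*}=0$, then, writing an arbitrary $w\in\Delta^0_j$ by \ref{short-idem} as a $\dotplus$-combination of elements $v\cdot c$ with $v\in\Delta^0_{\pm j_*}$ and of elements $a*b$, the identities \ref{cir-dot}, \ref{inv-cir} together with $\mathcal D^{\mathrm{min}}_i\circ\Delta^0_j=0$ give $u\circ w=0$. Then $\mathcal D^{\mathrm{max}}_i$ is a subgroup ($\iota_i$ a homomorphism, $\circ$ biadditive), a $\mathbb Z$-submodule (the action restricts to $v\mapsto n^2v$ there since $\tau_i(v)=2v$), central in $\Delta^0_i$ (by \ref{dot-cen} and \ref{inv-cir}, since $u\in\mathcal D^{\mathrm{max}}_i$ annihilates $v\circ u$ for all $v$), and contains $\mathcal D^{\mathrm{min}}_i$ (each $a*b$ is $\iota_i$-fixed and $\circ$-annihilating by lemma \ref{idem-par-ofr}). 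Conditions \ref{cir-fil}--\ref{dot-lin-fil} then hold as before, the only new verification being \ref{dot-fil}: for $u\in\mathcal D^{\mathrm{max}}_i$ and $b\in R_{ij}$ one has $(u\cdot b)\cdot(-1)=\iota_i(u)\cdot b=u\cdot b$ by the second identity above, while $(u\cdot b)\circ\Delta^0_{\pm j_*}=\inv b\,(u\circ\Delta^0_{\pm j_*})=0$ for a choice of $j_*\notin\{0,\pm i,\pm j\}$ (by \ref{cir-dot}, \ref{inv-cir}), so $u\cdot b\in\mathcal D^{\mathrm{max}}_j$. I expect the main obstacle to be the middle of the argument: pinning down the correct closed form of the $\mathbb Z^\bullet$-action and, above all, proving the three compatibility identities on the generators of $\Delta^0_i$ — this is precisely where the hypotheses on $(-)\cdot(-1)$ are consumed.
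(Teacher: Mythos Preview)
Your proposal is correct and follows the same route as the paper. The paper's proof is extremely terse: it observes that $(-)\cdot(-1)$ is unique, records $[u\cdot(-1),v]=\dotminus[u,v]$ (equivalent to your centrality of $\tau_i(u)$), writes down the closed formula $u\cdot n=\binom{n+1}{2}^{\cdot}u\dotplus\binom{n}{2}^{\cdot}\bigl(u\cdot(-1)\bigr)$ for all $n\in\mathbb Z$, and declares that this yields the required structure for either $\mathcal D_i^{\mathrm{min}}$ or $\mathcal D_i^{\mathrm{max}}$. Your formula agrees with theirs (use $[u,\iota_i(u)]=\dot 0$ and $n+\binom n2=\binom{n+1}2$), and the detailed checks you outline for \ref{cir-lin}, \ref{tri-lin}, \ref{dot-lin} and for the description of $\mathcal D_i^{\mathrm{max}}$ are precisely what the paper suppresses.
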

\begin{proof}
    Clearly, such homomorphisms
    \(
        ({-}) \cdot (-1)
    \) are unique. Also,
    \(
        [u \cdot (-1), v] = \dotminus [u, v]
    \) for all
    \(
        u, v \in \Delta^0_i
    \). Let
    \(
        u \cdot n
        = {\binom{n + 1}2}^\cdot u
        \dotplus {\binom n 2}^\cdot u \cdot (-1)
    \) for
    \(
        u \in \Delta^0_i
    \) and
    \(
        n \in \mathbb Z
    \), where \(n^\cdot u\) denotes the \(n\)-th multiple of \(u\), this is the only possible choice. This formula gives the required structure for
    \(
        \mathcal D_i = \mathcal D_i^{\mathrm{min}}
    \) or
    \(
        \mathcal D_i = \mathcal D_i^{\mathrm{max}}
    \).
\end{proof}

There are two natural classes of partial graded odd form rings that always satisfy the statement of lemma \ref{idem-cond}, we call them centerless and centrally closed ones. Let
\(
    (R_{ij}, \Delta^0_i)_{ij}
\) be a partial graded odd form ring satisfying \ref{long-idem}, \ref{short-idem}, and the associativity conditions. Its \textit{center} is
\(
    (I_{ij}, \Gamma^0_i)_{ij}
\), where \(I_{ij}\) consists of
\(
    a \in R_{ij}
\) such that \(
    a R_{jk} = a R_{j, -k} = 0
\) for some (or for all)
\(
    |k| \notin \{ 0, |i|, |j| \}
\) and
\(
    R_{ki} a = R_{-k, i} a = 0
\) for some (or for all)
\(
    |k| \notin \{ 0, |i|, |j| \}
\);
\(
    \Gamma^0_i
\) consists of
\(
    u \in \Delta^0_i
\) such that
\(
    u \cdot R_{ij} = u \cdot R_{i, -j} = \dot 0
\) and
\(
    u \circ \Delta^0_j
    = u \circ \Delta^0_{-j}
    = u \triangleleft R_{ij}
    = u \triangleleft R_{i, -j}
    = 0
\) for some (or for all)
\(
    |j| \notin \{ 0, |i| \}
\). If
\(
    a \in I_{ij}
\), then
\(
    \inv a \in I_{-j, -i}
\),
\(
    \Delta^0_i \triangleleft a = 0
\),
\(
    a * R_{j, -i} = \dot 0
\),
\(
    \Delta^0_i \cdot a = \dot 0
\).

A surjective homomorphism
\(
    f = (f_{ij}, f_i^0)_{ij}
    \colon (R_{ij}, \Delta^0_i)_{ij}
    \to (S_{ij}, \Theta^0_i)_{ij}
\) of partial graded odd form rings (in the sense of universal algebra) satisfying \ref{long-idem}, \ref{short-idem}, and the associativity conditions is a \textit{central extension} if its kernel
\(
    \Ker(f) = (\Ker(f_{ij}), \Ker(f_i^0))_{ij}
\) is contained in the center of
\(
    (R_{ij}, \Delta^0_i)_{ij}
\).

We say that a partial graded odd form ring satisfying \ref{long-idem}, \ref{short-idem}, and the associativity conditions is \textit{centrally closed} if is does not have non-trivial central extensions satisfying these conditions and \textit{centerless} if it is not a non-trivial central extension.

\begin{lemma} \label{cext-comp}
    A composition of central extensions of partial graded odd form rings satisfying \ref{long-idem}, \ref{short-idem}, and \ref{ass-law-l}--\ref{ass-law-bal} is also a central extension.
\end{lemma}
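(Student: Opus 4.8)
The plan is to unwind the definition of central extension. Writing $f\colon (R_{ij},\Delta^0_i)_{ij}\to(S_{ij},\Theta^0_i)_{ij}$ for the first central extension and $g$ for the second, the composite $g\circ f$ is again a surjective homomorphism between partial graded odd form rings satisfying \ref{long-idem}, \ref{short-idem}, and \ref{ass-law-l}--\ref{ass-law-bal}, so it suffices to show that $\Ker(g\circ f)$ lies in the center of $(R_{ij},\Delta^0_i)_{ij}$. The mechanism I would use is: applying one structure operation to an element of $\Ker(g\circ f)$ lands in $\Ker(f)$, because the $f$-image of that element already lies in the center of the middle ring; and the center here is an \emph{annihilator}-type condition, so, combined with \ref{long-idem} and \ref{short-idem} expressing each component as generated by products of others, this forces the original products to vanish outright. (For the ordinary group/ring center the analogue is false, e.g.\ for a Heisenberg group, so the idempotency hypotheses are genuinely used.)

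First I would handle the ring components. Given $a\in\Ker((g\circ f)_{ij})$, its image $f(a)$ is central in the middle ring, so $f(a)S_{jk}=0$ for every admissible index $k$ (here using that the ``for some'' and ``for all'' forms of the center condition coincide); by surjectivity of $f$ this gives $aR_{jk}\subseteq\Ker(f_{ik})$, which is central in $R$. Then $(aR_{jk})R_{km}=0$ for admissible $m$, which equals $a(R_{jk}R_{km})$ by associativity, and similarly with $-k$; feeding this into \ref{long-idem}, $R_{jm}=\langle R_{jk}R_{km},\,R_{j,-k}R_{-k,m}\rangle$, yields $aR_{jm}=aR_{j,-m}=0$, and the symmetric conditions $R_{mi}a=R_{-m,i}a=0$ follow by applying the involution. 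For $\ell\ge 4$ the four index magnitudes $|i|,|j|,|k|,|m|$ can be taken distinct and non-zero and the reassociation is just \ref{mul-ass}; for $\ell=3$ this step is exactly where the stronger associativity conditions \ref{ass-law-l}--\ref{ass-law-bal} are needed, after expanding via \ref{long-idem}.

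Next I would handle the odd-form-parameter components: for $u\in\Delta^0_i\cap\Ker((g\circ f)_i^0)$, centrality of $f(u)$ forces $u\cdot R_{ij}\subseteq\Ker(f_j^0)$, $u\circ\Delta^0_j\subseteq\Ker(f_{-i,j})$, and $u\triangleleft R_{ij}\subseteq\Ker(f_{-i,j})$ all to lie in the center of $R$. From $(u\cdot a)\cdot R_{jm}=\dot 0$, the identity $(u\cdot a)\cdot b=u\cdot(ab)$ of \ref{dot-ass}, and \ref{long-idem} I would get $u\cdot R_{im}=u\cdot R_{i,-m}=\dot 0$; from \ref{tri-ass} and \ref{long-idem}, analogously, $u\triangleleft R_{im}=u\triangleleft R_{i,-m}=0$; and using \ref{cir-dot} to peel a factor off $\Delta^0_j=\langle\Delta^0_k\cdot R_{kj},\ \Delta^0_{-k}\cdot R_{-k,j},\ R_{-j,k}*R_{kj}\rangle$ (the $*$-summand being annihilated by \ref{cir-str-b2}) I would get $u\circ\Delta^0_j=0$. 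Since these conditions for one index imply them for all, $u$ lands in $\Gamma^0_i$. Hence $\Ker(g\circ f)$ is central and $g\circ f$ is a central extension. I expect the only real difficulty to be the index bookkeeping in the reassociations --- routine for $\ell\ge 4$, but the precise reason the associativity conditions appear in the hypotheses when $\ell=3$.
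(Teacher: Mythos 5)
You follow the same route as the paper: reduce everything to the statement that, for a single central extension \(f\), any element whose \(f\)-image is central is itself central, and verify this componentwise using the annihilator form of the center together with \ref{long-idem} and \ref{short-idem}. Your treatment of the odd form parameter components is correct (the paper dismisses this part as clear); the only point worth making explicit is that \(u \cdot ({-})\) is additive only up to the correction term \(\inv{\,b\,} * (u \triangleleft a)\) from \ref{dot-dis-r}, so one should first obtain \(u \triangleleft R_{im} = 0\) (via \ref{tri-ass}, as you do) before summing the relations \(u \cdot (ab) = \dot 0\) over the generators of \(R_{im}\). The ring components for \(\ell \geq 4\) are also handled correctly with \ref{mul-ass} alone.

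The genuine gap is the case \(\ell = 3\), which by theorem \ref{ass-forced} is the only case where \ref{ass-law-l}--\ref{ass-law-bal} are not automatic, hence the only case this lemma is really about, and there your argument stops at a gesture. Concretely, take \(a \in R_{ij}\) with \(f_{ij}(a)\) central and let \(|n|\) be the third non-zero absolute value; one must prove \(a R_{j, \pm n} = 0\) outright. Your displayed step breaks down here: expanding \(R_{jn}\) once by \ref{long-idem} forces the pivot \(\pm i\), so the would-be partial product \(a R_{j, \pm i}\) lands in \(R_{i, \pm i}\), which is not even a component of a partial graded odd form ring, and no three- or four-variable reassociation law is available among \ref{mul-cen}--\ref{dot-ass}. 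What is needed --- and is the entire content of the paper's proof --- is a threefold expansion \(R_{jn} \subseteq \bigl\langle (R_{jk}(R_{kl}R_{lm}))R_{mn} \bigr\rangle\) with \(|k| = |n|\), \(|l| = |j|\), \(|m| = |i|\) (signs varying), which matches exactly the index pattern of the five-variable identity \ref{ass-law-l}: from \(a((b(cd))e) = ((ab)c)(de)\) the element \(ab\) lies in \(\Ker(f_{ik})\), hence is central and annihilates \(c \in R_{kl}\) because \(|l| = |j| \notin \{0, |i|, |k|\}\), giving \(a R_{j, \pm n} = 0\), with \(\inv a\) handling the left-hand conditions. So the missing piece is more than index bookkeeping: one must see that the expansion has to be pushed to five factors and check that the resulting parenthesization is one of the four listed associativity conditions. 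Note finally that the paper runs this single computation uniformly for all \(\ell \geq 3\) instead of splitting into the cases \(\ell \geq 4\) and \(\ell = 3\).
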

\begin{proof}
    Let
    \(
        (f_{ij}, f^0_i)_{ij}
        \colon (R_{ij}, \Delta^0_i)_{ij}
        \to (S_{ij}, \Theta^0_i)_{ij}
    \) be a central extension of graded odd form rings satisfying the required conditions. In the following all indices are non-zero and with distinct absolute values unless otherwise stated. If \(f^0_i(u)\) is central for some
    \(
        u \in \Delta^0_i
    \), then clearly \(u\) is also central. If \(f_{ij}(a)\) is central for some
    \(
        a \in R_{ij}
    \), then \(a\) is central since
    \begin{align*}
        a R_{jn}
        &\subseteq \bigl\langle
            a ((R_{jk} (R_{kl} R_{lm})) R_{mn})
            \mid
            |i| = |m|, |j| = |l|, |k| = |n|
        \bigr\rangle \\
        &= \bigl\langle
            ((a R_{jk}) R_{kl}) (R_{lm} R_{mn})
            \mid
            |i| = |m|, |j| = |l|, |k| = |n|
        \bigr\rangle
        = 0
    \end{align*}
    by \ref{ass-law-l} and similarly for \(\inv a\).
\end{proof}

\begin{lemma} \label{centerless}
    Let
    \(
        (R_{ij}, \Delta^0_i)_{ij}
    \) be a partial graded odd form ring satisfying \ref{long-idem}, \ref{short-idem}, and the associativity conditions with center
    \(
        (I_{ij}, \Gamma^0_i)_{ij}
    \). Then
    \(
        (S_{ij}, \Theta^0_i)_{ij}
        = (
            R_{ij} / I_{ij},
            \Delta^0_i / \Gamma^0_i
        )_{ij}
    \) is a centerless partial graded odd form ring with a structure of a partial graded odd form \(\mathbb Z\)-algebra.
\end{lemma}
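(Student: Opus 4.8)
The plan is to establish three things in turn. First, that the center $(I_{ij},\Gamma^0_i)_{ij}$ is an ideal in the appropriate sense, so that $(S_{ij},\Theta^0_i)_{ij}$ is a well-defined partial graded odd form ring still satisfying \ref{long-idem}, \ref{short-idem} and the associativity conditions; second, that it is centerless; third, that it carries a $(-)\cdot(-1)$ homomorphism as in lemma \ref{idem-cond}, which then upgrades it to a partial graded odd form $\mathbb Z$-algebra.

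For the first point I would check that $(I_{ij},\Gamma^0_i)_{ij}$ is closed under all the operations, using the remarks following the definition of the center ($a\in I_{ij}$ forces $\overline a\in I_{-j,-i}$, $\Delta^0_i\triangleleft a=0$, $a*R_{j,-i}=\dot 0$, $\Delta^0_i\cdot a=\dot 0$), together with \ref{mul-cen}--\ref{dot-ass}, \ref{long-idem}, \ref{short-idem} and lemma \ref{idem-par-ofr}. For instance $I_{ik}R_{kj}\subseteq I_{ij}$ comes from \ref{mul-ass} and \ref{long-idem}, $a*R_{ji}=\dot 0$ for $a\in I_{-i,j}$ is a reindexing of the quoted remark (and $R_{-i,j}*b=\dot 0$ for $b\in I_{ji}$ follows via \ref{str-inv}), and $\Gamma^0_i$ is central in $\Delta^0_i$ — hence a normal subgroup — because, writing an arbitrary $v\in\Delta^0_i$ through \ref{short-idem} and applying \ref{dot-cen}, the commutator of $u\in\Gamma^0_i$ with $v$ is a product of terms $\overline c*(w\circ u)$ with $w\in\Delta^0_j$, $|j|\notin\{0,|i|\}$, each vanishing since $w\circ u=\overline{u\circ w}=0$. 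As \ref{long-idem}, \ref{short-idem} and \ref{ass-law-l}--\ref{ass-law-bal} pass to surjective images, $(S_{ij},\Theta^0_i)_{ij}$ satisfies them, and by construction the projection $(R_{ij},\Delta^0_i)_{ij}\to(S_{ij},\Theta^0_i)_{ij}$ is a central extension.

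The second point follows from the argument proving lemma \ref{cext-comp}, i.e.\ that the preimage of the center under a central extension is the center: composing the above projection with the quotient of $(S_{ij},\Theta^0_i)_{ij}$ by \emph{its} center gives a central extension whose kernel therefore lies in the center of $(R_{ij},\Delta^0_i)_{ij}$, that is, in the kernel of the projection; hence the quotient of $(S_{ij},\Theta^0_i)_{ij}$ by its center is an isomorphism, so that center is trivial and $(S_{ij},\Theta^0_i)_{ij}$ is not a non-trivial central extension. For the third point I would build $(-)\cdot(-1)\colon\Theta^0_i\to\Theta^0_i$ explicitly. Given $u\in\Delta^0_i$, decompose $u=\sum^\cdot_s g_s$ via \ref{short-idem} with each $g_s$ either $v_s\cdot c_s$ ($v_s\in\Delta^0_{j_s}$, $c_s\in R_{j_s i}$, $|j_s|\notin\{0,|i|\}$) or $a_s*b_s$, and put $g_s^-:=v_s\cdot(-c_s)$, respectively $g_s^-:=a_s*b_s$, and $w:=\sum^\cdot_s g_s^-$. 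Using \ref{dot-ass}, \ref{dot-str}, \ref{tri-dot}, \ref{tri-str}, \ref{tri-dis-l-2}, \ref{cir-dis-l}, \ref{cir-str-b2} and lemma \ref{tri-dot-2} one checks that $w\cdot d=u\cdot(-d)$, $w\triangleleft d=u\triangleleft d$ and $w\circ x=-(u\circ x)$ for all admissible $d$ and $x$; since $(S_{ij},\Theta^0_i)_{ij}$ is centerless the class $\overline w\in\Theta^0_i$ is determined by these three families of values and so depends only on $\overline u$, and one sets $\overline u\cdot(-1):=\overline w$. The homomorphism property is immediate from concatenating decompositions, and $(a*b)\cdot(-1)=a*b$ (one-term decomposition) and $(u\cdot b)\cdot(-1)=u\cdot(-b)$ hold because the flip commutes with $(-)\cdot b$ on both kinds of generators, the two sign changes on an $a*b$-term cancelling. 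Lemma \ref{idem-cond} then supplies the partial graded odd form $\mathbb Z$-algebra structure, with either $\mathcal D^{\mathrm{min}}_i$ or $\mathcal D^{\mathrm{max}}_i$ as augmentation.

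The main obstacle is the well-definedness of $(-)\cdot(-1)$ in the last step. This is exactly what fails for an arbitrary partial graded odd form ring coming from a group with $\mathsf{BC}_\ell$-commutator relations (cf.\ the Example before lemma \ref{idem-cond}), and centerlessness is what rescues it: without it $w$ is only pinned down modulo $\mathcal D^{\mathrm{min}}_i$, which need not lie in the center. A smaller technical point inside the same argument is that in a centerless ring two elements acting identically under $(-)\cdot R$, $(-)\triangleleft R$ and $(-)\circ\Delta^0$ must coincide, for which one recovers the projections $\pi$ from these operations using \ref{half-idem} and \ref{pi-dot}.
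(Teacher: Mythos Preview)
Your proposal is correct and follows essentially the same route as the paper: the paper likewise shows that for a decomposition of $v$ with a fixed auxiliary index $k$ the sign-flipped element $v'=\sum^\cdot_s u_s\cdot(-a_s)\dotplus\sum^\cdot_t b_t*c_t$ satisfies $v'\circ w=-v\circ w$, $v'\triangleleft x=v\triangleleft x$, and $v'\cdot x=v\cdot(-x)$, so $v\in\Gamma^0_i$ forces $v'\in\Gamma^0_i$, and then invokes lemma \ref{idem-cond}. One small remark: your appeal to centerlessness of the quotient in the third step is not actually needed, since two candidates for $w$ with the same actions under $({-})\cdot R$, $({-})\triangleleft R$, $({-})\circ\Delta^0$ already differ by an element of $\Gamma^0_i$ by the very definition of the center---which is exactly how the paper phrases the argument.
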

\begin{proof}
    It is easy to see that
    \(
        \Gamma^0_i \leqt \Delta^0_i
    \) and
    \(
        (S_{ij}, \Theta^0_i)_{ij}
    \) is a well-defined partial graded odd form ring. Suppose that
    \[
        v
        = \sum_{1 \leq s \leq N}^\cdot u_s \cdot a_s
        \dotplus \sum_{1 \leq t \leq M}^\cdot
            b_t * c_t
        \in \Gamma^0_i
    \]
    for some
    \(
        u_s \in \Delta^0_{j_s}
    \),
    \(
        a_s \in R_{j_s i}
    \),
    \(
        b_t \in R_{-i, k}
    \),
    \(
        c_t \in R_{k i}
    \), and
    \(
        |j_s| = k \notin \{ 0, |i| \}
    \). We claim that
    \[
        v'
        = \sum_{1 \leq s \leq N}^\cdot
            u_s \cdot (-a_s)
        \dotplus \sum_{1 \leq t \leq M}^\cdot
            b_t * c_t
    \]
    also lies in
    \(
        \Gamma^0_i
    \). Indeed,
    \(
        v' \circ w = -v \circ w = 0
    \),
    \(
        v' \triangleleft x = v \triangleleft x = 0
    \),
    \(
        v' \cdot x = v \cdot (-x) = \dot 0
    \) for
    \(
        x \in R_{il}
    \),
    \(
        w \in \Delta^0_l
    \),
    \(
        |l| \notin \{ 0, |i|, k \}
    \). The claim now follows from lemma \ref{idem-cond}.
\end{proof}

Now we are going to describe universal central extensions of partial graded odd form rings. By lemmas \ref{cext-comp}, \ref{centerless} and theorem \ref{coord-cryst}, it suffices to consider only graded odd form \(\mathbb Z\)-algebras
\(
    (R, \Delta)
\) satisfying the idempotency conditions. Let
\(
    \widetilde R_{ij}
\) be the abelian group generated by formal products
\(
    a \otimes b
\) for
\(
    a \in R_{ik}
\),
\(
    b \in R_{kj}
\) satisfying the relations
\begin{itemize}

    \item
    \(
        (a, b) \mapsto a \otimes b
    \) is biadditive;

    \item
    \(
        ab \otimes c = a \otimes bc
    \) for
    \(
        a \in R_{ik}
    \),
    \(
        b \in R_{kl}
    \),
    \(
        c \in R_{kj}
    \).

\end{itemize}
There are natural homomorphisms
\(
    f_{ij} \colon \widetilde R_{ij} \to R_{ij},\,
    a \otimes b \mapsto ab,
\) and maps
\begin{align*}
    \inv{ (-) }
    &\colon \widetilde R_{ij}
    \to \widetilde R_{-j, -i},
    a \otimes b \mapsto \inv{\,b\,} \otimes \inv a,
    \\
    ({-}) ({=})
    &\colon \widetilde R_{ij} \times \widetilde R_{jk}
    \to \widetilde R_{ik},
    (a \otimes b, c \otimes d)
    \mapsto ab \otimes cd
\end{align*}
such that
\(
    \widetilde R = \bigoplus_{ij} \widetilde R_{ij}
\) is an associative ring with involution and
\(
    \bigoplus_{ij} f_{ij}
\) is a ring homomorphism. Using lemmas \ref{ofg-free} and \ref{ofg-fact} we construct the odd form parameter
\(
    \widetilde \Delta^0_i
\) for the hermitian group
\(
    (\widetilde R_{0i}, \widetilde R_{-i, i})
\) generated by formal products
\(
    u \boxtimes a
\) for
\(
    u \in \Delta^0_j
\),
\(
    a \in R_{ji}
\) satisfying the relations
\begin{itemize}

    \item
    \(
        (u, a) \mapsto u \boxtimes a
    \) is a component of a sesquiquadratic map (in particular,
    \(
        \pi(u \boxtimes a) = \pi(u) \otimes a
    \) and
    \(
        \rho(u \boxtimes a) = \inv a \otimes \rho(u) a
    \));

    \item
    \(
        (u \cdot a) \boxtimes b = u \boxtimes ab
    \) for
    \(
        u \in \Delta^0_k
    \),
    \(
        a \in R_{kj}
    \),
    \(
        b \in R_{ji}
    \).

\end{itemize}

The components of sesquiquadratic maps
\begin{align*}
    ({-}) \cdot ({=})
    &\colon \widetilde \Delta^0_i
    \times \widetilde R_{ij}
    \to \widetilde \Delta^0_j,\,
    (u \boxtimes a, b \otimes c)
    \mapsto u \boxtimes abc,
    \\
    ({-}) \cdot ({=})
    &\colon \widetilde \Delta^0_i \times \mathbb Z
    \to \widetilde \Delta^0_i,\
    (u \boxtimes a, n) \mapsto u \boxtimes an
\end{align*}
are well-defined by lemmas \ref{ofg-sq-free}, \ref{ofg-sq-fact} and make
\(
    (\widetilde R, \widetilde \Delta)
\) a graded odd form \(\mathbb Z\)-algebra with
\(
    \widetilde{ \mathcal D_i }
    = \phi(\widetilde R_{-i, i})
\) by lemmas \ref{ofg-sq-ass}, \ref{peirce-ofr}. The maps
\(
    f_i \colon \widetilde \Delta^0_i \to \Delta^0_i,
    u \boxtimes a \mapsto ua
\) are components of a homomorphism
\(
    f = (f_{ij}, f_i)_{ij}
\) of graded odd form \(\mathbb Z\)-algebras. Clearly,
\(
    ab = f_{ij}(a) \otimes f_{jk}(b)
\) and
\(
    u \cdot a = f^0_i(u) \boxtimes f_{ij}(a)
\) for all
\(
    a \in \widetilde R_{ij}
\),
\(
    b \in \widetilde R_{jk}
\),
\(
    u \in \widetilde \Delta^0_i
\).

\begin{lemma} \label{ucext-pres}
    The groups
    \(
        \widetilde R_{ij}
    \) are generated by
    \(
        a \otimes b
    \) for
    \(
        a \in R_{ik}
    \),
    \(
        b \in R_{kj}
    \),
    \(
        |k| \notin \{ 0, |i|, |j| \}
    \) with the relations
    \begin{itemize}

        \item
        \(
            (a, b) \mapsto a \otimes b
        \) are biadditive;

        \item
        \(
            ab \otimes c = a \otimes bc
        \) for
        \(
            a \in R_{ik}
        \),
        \(
            b \in R_{kl}
        \),
        \(
            c \in R_{lj}
        \), and
        \(
            |k|, |l| \notin \{ 0, |i|, |j| \}
        \).

    \end{itemize}
    The odd form parameters
    \(
        \widetilde \Delta^0_i
    \) are generated by
    \(
        u \boxtimes a
    \) for
    \(
        u \in \Delta^0_j
    \),
    \(
        a \in R_{ji}
    \),
    \(
        |j| \notin \{ 0, |i| \}
    \) with the relations
    \begin{itemize}

        \item
        \(
            (u, a) \mapsto u \boxtimes a
        \) are components of sesquiquadratic maps;

        \item
        \(
            (u \cdot a) \boxtimes b = u \boxtimes ab
        \) for
        \(
            u \in \Delta^0_k
        \),
        \(
            a \in R_{kj}
        \),
        \(
            b \in R_{ji}
        \), and
        \(
            |j|, |k| \notin \{ 0, |i| \}
        \).

    \end{itemize}
\end{lemma}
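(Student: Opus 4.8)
The plan is to show that the two presentations in the statement define groups canonically isomorphic to $\widetilde R_{ij}$ and $\widetilde\Delta^0_i$, by the same scheme as in Lemmas \ref{pres-ring} and \ref{pres-form}. Write $\widetilde R'_{ij}$ for the abelian group given by the first presentation and $\widetilde\Delta^{0\prime}_i$ for the odd form parameter given by the second. Every reduced generator is a defining generator of $\widetilde R_{ij}$ (resp. $\widetilde\Delta^0_i$) and every reduced relation is a special case of a defining relation, so there are canonical homomorphisms $\iota_{ij}\colon\widetilde R'_{ij}\to\widetilde R_{ij}$ and $\iota^0_i\colon\widetilde\Delta^{0\prime}_i\to\widetilde\Delta^0_i$. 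They are surjective: a defining generator $a\otimes b$ with $a\in R_{ik}$, $b\in R_{kj}$ and $|k|\in\{0,|i|,|j|\}$ is rewritten as a sum of reduced generators by decomposing $a$ or $b$ through a generic index $|m|\notin\{0,|i|,|j|\}$ using the idempotency conditions \ref{long-idem}, \ref{short-idem}, \ref{half-idem} and then applying $xy\otimes z=x\otimes yz$; likewise every $u\boxtimes a$ with $|j|\in\{0,|i|\}$ is rewritten using \ref{short-idem} and the decomposition of $\Delta^0_0$ provided by Lemma \ref{pres-form}. The intermediate generic indices needed here and below exist because $\ell\geq 4$.

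For injectivity I would build inverse maps using Lemmas \ref{mul-cons} and \ref{act-cons}. First, for each middle index $k$ one constructs a biadditive map $R_{ik}\times R_{kj}\to\widetilde R'_{ij}$, $(a,b)\mapsto a\otimes b$, equal to the reduced generator when $|k|$ is generic and obtained by decomposing one factor through a generic index otherwise; the independence of the decomposition is exactly the content of Lemma \ref{mul-cons}, applied to the chains of groups listed before it. One then verifies $ab\otimes c=a\otimes bc$ for all index configurations by a case analysis on the absolute values of $i,j,k,l$, parallel to the proofs of Lemmas \ref{pres-ring} and \ref{ring-struct}, invoking the associativity conditions \ref{ass-law-l}--\ref{ass-law-bal} exactly when two intermediate indices are forced to collide. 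By the universal property of $\widetilde R_{ij}$ this produces a homomorphism $\widetilde R_{ij}\to\widetilde R'_{ij}$ inverse to $\iota_{ij}$ on generators, so $\iota_{ij}$ is an isomorphism. The odd form parameter case is identical with Lemma \ref{act-cons} in place of Lemma \ref{mul-cons}: for each $j$ one produces the sesquiquadratic map $(R_{0j},R_{-j,j},\Delta^0_j)\times R_{ji}\to(R_{0i},R_{-i,i},\widetilde\Delta^{0\prime}_i)$, $(u,a)\mapsto u\boxtimes a$, checks $(u\cdot a)\boxtimes b=u\boxtimes ab$ on generators and uses Lemma \ref{ofg-sq-ass} for the associative laws, just as in the proofs of Lemmas \ref{pres-form} and \ref{form-struct}, so that $\iota^0_i$ is an isomorphism as well.

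The main obstacle is the usual bookkeeping of Lemmas \ref{pres-ring}--\ref{form-struct}: proving that the products and actions defined by decomposition do not depend on the decomposition and that all the generalized associative laws in up to five ring variables, together with their counterparts for the action on the odd form parameter, survive. Lemmas \ref{mul-cons}, \ref{act-cons} and \ref{ofg-sq-ass}, in combination with the associativity conditions, reduce this to a finite list of index patterns; no genuinely new phenomenon appears beyond what was already handled in the construction of $R$ and $\Delta$ in the previous sections.
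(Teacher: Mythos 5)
Your proposal follows the paper's own proof: construct the missing generators inside the reduced presentations via Lemmas \ref{mul-cons} and \ref{act-cons}, verify the remaining relations by decomposing the middle factor through a generic index (with Lemma \ref{ofg-sq-ass} for the sesquiquadratic associativity laws), and conclude that the two presentations give the same objects. The only slip is the appeal to \(\ell \geq 4\) for the existence of generic indices: the lemma is applied for \(\ell \geq 3\) (by this point \(R\) is already an associative ring, so the associativity conditions you invoke are automatic), and a single generic absolute value suffices throughout, so nothing in the argument actually needs \(\ell \geq 4\).
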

\begin{proof}
    The remaining generators of
    \(
        \widetilde R_{ij}
    \) and
    \(
        \widetilde \Delta_i
    \) may be constructed using lemmas \ref{mul-cons} and \ref{act-cons}. The remaining relations follow from
    \[
        abc \otimes d = ab \otimes cd = a \otimes bcd
    \]
    for
    \(
        a \in R_{ik}
    \),
    \(
        b \in R_{kl}
    \),
    \(
        c \in R_{lm}
    \),
    \(
        d \in R_{mj}
    \),
    \(
        |l| \notin \{ 0, |i|, |j| \}
    \);
    \[
        (u \cdot ab) \boxtimes c
        = (u \cdot a) \boxtimes bc
        = u \boxtimes abc
    \]
    for
    \(
        u \in \Delta^0_j
    \),
    \(
        a \in R_{jk}
    \),
    \(
        b \in R_{kl}
    \),
    \(
        c \in R_{li}
    \),
    \(
        |k| \notin \{ 0, |i| \}
    \); and lemma \ref{ofg-sq-ass}.
\end{proof}

\begin{lemma} \label{ucext}
    The partial graded odd form ring
    \(
        (
            \widetilde R_{ij},
            \widetilde \Delta^0_i
        )_{ij}
    \) is the universal central extension of
    \(
        (R_{ij}, \Delta^0_i)_{ij}
    \). In particular, it is centrally closed.
\end{lemma}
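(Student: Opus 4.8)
The plan is to verify the two defining properties of a universal central extension for the morphism $f = (f_{ij}, f^0_i)_{ij}$ constructed above: that $f$ itself is a central extension, and that it is initial among central extensions of $(R_{ij}, \Delta^0_i)_{ij}$. Central closedness is then the usual formal corollary: given a central extension $(\widetilde{\widetilde R}, \widetilde{\widetilde\Delta}) \to (\widetilde R, \widetilde\Delta)$, composing with $f$ produces a central extension of $(R, \Delta)$ by lemma \ref{cext-comp}, which by universality admits a section; since the top term again satisfies \ref{long-idem} and \ref{short-idem} it coincides with the image of that section, so the extension is trivial.

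First I would check that $f$ is a central extension. Surjectivity is immediate from \ref{long-idem} and \ref{short-idem}. For centrality of $\Ker(f)$ use the explicit presentations: if $x = \sum_s a_s \otimes b_s \in \Ker(f_{ij})$, i.e. $\sum_s a_s b_s = 0$, then for any generator $c \otimes d$ of $\widetilde R_{jm}$ one has $x\,(c\otimes d) = \bigl(\sum_s a_s b_s\bigr)\otimes cd = 0$, and symmetrically on the left, so $x$ satisfies the defining conditions of the center (the remaining ones being automatic by the properties of $I_{ij}$). If $x = \sum_s u_s \boxtimes a_s \in \Ker(f^0_i)$, i.e. $\sum^\cdot_s u_s \cdot a_s = \dot 0$, then $x \cdot (b \otimes c) = \sum^\cdot_s u_s \boxtimes a_s b c = \bigl(\sum^\cdot_s u_s \cdot a_s\bigr)\boxtimes bc = \dot 0$ using $(u\cdot a)\boxtimes b = u \boxtimes ab$ and additivity of $\boxtimes$ in its odd-form-parameter argument; the operations $\circ$, $\triangleleft$ are handled through $\pi(x) \in \Ker(f_{0i})$ and $\rho(x) \in \Ker(f_{-i,i})$, which are themselves built from $\otimes$-products and hence central by the previous case.

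For universality, let $g = (g_{ij}, g^0_i)\colon (S_{ij}, \Theta^0_i)_{ij} \to (R_{ij}, \Delta^0_i)_{ij}$ be a central extension. Fix lifts $\hat a \in S_{ij}$ of each $a \in R_{ij}$ (chosen compatibly with the involution) and $\hat u \in \Theta^0_i$ of each $u \in \Delta^0_i$, and define $h$ on the generating sets of lemma \ref{ucext-pres} by $h_{ij}(a \otimes b) = \hat a\,\hat b$ and $h^0_i(u \boxtimes a) = \hat u \cdot \hat a$. The crux is independence of the chosen lifts: replacing $\hat a$ by $\hat a + z$ with $z \in \Ker(g)$ central changes $\hat a\,\hat b$ by $z\,\hat b \in z\,S_{kj}$, and although centrality of $z$ directly gives only $z\,S_{k\lambda} = 0$ for $|\lambda|$ outside $\{0, |i|, |k|\}$, one gets $z\,S_{kj} = 0$ in general by rewriting $S_{kj}$ through such an auxiliary index (lemma \ref{ring-struct}) and using associativity of the multiplication; the $\boxtimes$-case is analogous, additionally invoking $\Delta^0_\bullet \cdot(\text{central}) = \dot 0$ and $(\text{central}) * R_{\bullet\bullet} = \dot 0$. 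The relations of lemma \ref{ucext-pres} then hold for $h$ because they hold verbatim for products of lifts in $(S, \Theta)$; that $h = (h_{ij}, h^0_i)$ is a morphism of partial graded odd form rings follows because each of the operations $\inv{(-)}$, multiplication, $*$, $\circ$, $\triangleleft$, $\cdot$ is computed on generators by a formula that lifts, and $g\circ h = f$ by construction. Uniqueness: any other morphism $h'$ over $(R, \Delta)$ differs from $h$ by a map into the central $\Ker(g)$, and on a product $\tilde b\,\tilde c$ (respectively $\tilde u \cdot \tilde a$) all the resulting correction terms vanish by the same centrality-and-associativity argument; since $\widetilde R_{ij}$ is generated by $\widetilde R_{ik}\widetilde R_{kj}$ and $\widetilde\Delta^0_i$ by terms $\widetilde\Delta^0_j \cdot \widetilde R_{ji}$, $\widetilde R_{-i,j} * \widetilde R_{ji}$ together with $\phi(\widetilde R_{-i,i})$, this forces $h' = h$.

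The main obstacle is the index bookkeeping underlying the independence-of-lifts step (and its twin in the uniqueness argument): the center $(I_{ij}, \Gamma^0_i)$ only detects products whose interior index avoids $\{0, |i|, |j|\}$, so every manipulation that multiplies a central element by a factor with a forbidden interior index must be rerouted through an auxiliary index, which is where the hypothesis $\ell \geq 4$ (equivalently, the associativity conditions, so that lemma \ref{ring-struct} is available) enters; in parallel, on the $\widetilde\Delta^0$-side one must carry the sesquiquadratic correction terms $\phi(\beta(\cdot,\cdot,\cdot))$ through each reassociation. Verifying that $h$ respects all six operations, rather than just $\widetilde R_{ik}\widetilde R_{kj}$-multiplication, is where the bulk of the routine work lies.
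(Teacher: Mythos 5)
Your proposal follows essentially the same route as the paper's proof: define the induced homomorphism on the generators from lemma \ref{ucext-pres} as products of arbitrary lifts in the given central extension, use centrality of the kernel (together with the associativity conditions, in particular \ref{ass-law-l}, and lemmas \ref{ofg-free}, \ref{ofg-fact}) for well-definedness and uniqueness, and obtain central closedness by the standard formal argument via lemma \ref{cext-comp}. The points you gloss over --- the \(\phi(\widetilde R_{-i,i})\)-part of general elements of \(\widetilde\Delta^0_i\) and the sesquiquadratic correction terms in the centrality check, and the verification that \((\widetilde R_{ij}, \widetilde\Delta^0_i)_{ij}\) itself satisfies \ref{long-idem}, \ref{short-idem} and the associativity conditions --- are routine and at the same level of detail the paper itself omits.
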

\begin{proof}
    Clearly,
    \(
        (
            \widetilde R_{ij},
            \widetilde \Delta^0_i
        )_{ij}
    \) is a central extension of
    \(
        (R_{ij}, \Delta^0_i)_{ij}
    \). Now let
    \(
        (g_{ij}, g^0_i)_{ij}
        \colon (S_{ij}, \Theta^0_i)_{ij}
        \to (R_{ij}, \Delta^0_i)_{ij}
    \) be any central extension of partial graded odd form rings satisfying \ref{long-idem}, \ref{short-idem}, and the associativity conditions. There is at most one homomorphism
    \(
        (h_{ij}, h^0_i)_{ij}
        \colon (
            \widetilde R_{ij},
            \widetilde \Delta^0_i
        )_{ij} \to (S_{ij}, \Theta^0_i)_{ij}
    \) making the resulting diagram commutative since
    \begin{align*}
        h_{ij}(g_{ik}(a) \otimes g_{kj}(b)) &= ab;
        &
        h^0_i(\phi(
            g_{-i, j}(a) \otimes g_{ji}(b)
        ) ) &= a * b;
        &
        h^0_i(g^0_j(u) \boxtimes g_{ji}(a))
        &= u \cdot a
    \end{align*}
    for distinct non-zero \(|i|\), \(|j|\), \(|k|\). These identities define unique group homomorphisms
    \(
        h_{ij} \colon \widetilde R_{ij} \to S_{ij}
    \) and
    \(
        h^0_i
        \colon \widetilde \Delta^0_i
        \to \Theta^0_i
    \) for
    \(
        0 \neq |i| \neq |j| \neq 0
    \) by \ref{ass-law-l} and lemmas \ref{ofg-free}, \ref{ofg-fact}, \ref{ucext-pres}. Moreover,
    \(
        (h_{ij}, h^0_i)_{ij}
    \) is a homomorphism of partial graded odd form rings.
\end{proof}

\begin{theorem} \label{coord-sphere}
    Any partial graded odd form ring with
    \(
        \ell \geq 3
    \) satisfying \ref{long-idem}, \ref{short-idem}, and the associativity conditions (e.g. if
    \(
        \ell \geq 4
    \)) arises from a group \(G\) with a family of subgroups
    \(
        G_\alpha \leq G
    \) indexed by roots of a root system of type
    \(
        \mathsf{BC}_\ell
    \) satisfying \ref{comm-rel}--\ref{short-gen}. If such \(G\) is centerless or centrally closed, then the partial graded odd form ring completes to an odd form \(\mathbb Z\)-algebra.
\end{theorem}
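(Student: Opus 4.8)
The plan is to reduce to the centerless case — where, by lemma~\ref{centerless}, the partial graded odd form ring is automatically a $\mathbb Z$-algebra — and to recover the general case from the universal central extension of that quotient. Starting from a partial graded odd form ring $(R_{ij}, \Delta^0_i)$ satisfying \ref{long-idem}, \ref{short-idem}, and the associativity conditions, I would first form its centerless quotient $(S_{ij}, \Theta^0_i) = (R_{ij}/I_{ij}, \Delta^0_i/\Gamma^0_i)$, which by lemma~\ref{centerless} is a centerless partial graded odd form $\mathbb Z$-algebra still satisfying \ref{long-idem}, \ref{short-idem}, and the associativity conditions. Completing it by lemmas~\ref{ring-struct}, \ref{form-struct}, \ref{peirce-ofr} (as in the proof of theorem~\ref{coord-cryst}) gives a graded odd form $\mathbb Z$-algebra $(\bar R, \bar\Delta)$ satisfying the idempotency conditions, and lemma~\ref{ucext} then produces the graded odd form $\mathbb Z$-algebra $(\widetilde R, \widetilde\Delta)$ whose underlying partial ring $(\widetilde R_{ij}, \widetilde\Delta^0_i)$ is the universal central extension of $(S_{ij}, \Theta^0_i)$. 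Since $\widetilde R$ is an associative ring, $(\widetilde R_{ij}, \widetilde\Delta^0_i)$ satisfies \ref{long-idem}, \ref{short-idem}, and the associativity conditions, so by theorem~\ref{coord-cryst} the Steinberg group $\stunit(\widetilde R, \widetilde\Delta)$ satisfies \ref{comm-rel}--\ref{short-gen} with root subgroups canonically isomorphic to the $\widetilde R_{ij}$ and $\widetilde\Delta^0_i$.

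Next I would construct the comparison map. Because $(R_{ij}, \Delta^0_i) \to (S_{ij}, \Theta^0_i)$ is a central extension of partial graded odd form rings, the universal property in lemma~\ref{ucext} gives a homomorphism $h \colon (\widetilde R_{ij}, \widetilde\Delta^0_i) \to (R_{ij}, \Delta^0_i)$ lifting $\widetilde R \to \bar R$. The key point is that $h$ is surjective. For $a \in R_{ij}$, write $a = \sum_s b_s c_s$ with $b_s \in R_{i,\pm k}$, $c_s \in R_{\pm k, j}$ and $|k| \notin \{0,|i|,|j|\}$ using \ref{long-idem}, then replace each $b_s$, $c_s$ by an element of $\mathrm{im}(h)$ congruent to it modulo $I$; the resulting cross terms lie in $I_{i,\pm k} R_{\pm k, j}$, $R_{i,\pm k} I_{\pm k, j}$, and $I_{i,\pm k} I_{\pm k, j}$, all of which vanish since elements of the $I$'s kill the neighbouring Peirce pieces on the appropriate side, so $a \in \mathrm{im}(h)$. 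The analogous argument with \ref{short-idem} and the identities $\Delta^0_\bullet \triangleleft a = 0$, $\Delta^0_\bullet \cdot a = \dot 0$, $a * R_\bullet = \dot 0$ (for $a$ in the center, from the remarks following its definition) gives $\Delta^0_i \subseteq \mathrm{im}(h)$. Finally $\Ker(h)$ is contained in the kernel of the universal central extension, hence in the center $(\widetilde I_{ij}, \widetilde\Gamma^0_i)$ of $(\widetilde R_{ij}, \widetilde\Delta^0_i)$; thus $h$ is itself a central extension of $(R_{ij}, \Delta^0_i)$.

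Then I would set $G := \stunit(\widetilde R, \widetilde\Delta)/N$, where $N$ is generated by the elements $X_{ij}(c)$ for $c \in \Ker(h_{ij})$ and $X_i(c')$ for $c' \in \Ker(h^0_i)$. Since these $c$, $c'$ are central in $(\widetilde R_{ij}, \widetilde\Delta^0_i)$ they annihilate all the Peirce data, and a short inspection of the Steinberg relations shows each $X_{ij}(c)$, $X_i(c')$ is central in $\stunit(\widetilde R, \widetilde\Delta)$; hence $N$ is a central subgroup and $N = \bigoplus_\alpha N_\alpha$ with $N_\alpha = N \cap \stunit(\widetilde R, \widetilde\Delta)_\alpha$. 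The root subgroups $G_\alpha := \stunit(\widetilde R, \widetilde\Delta)_\alpha/N_\alpha$ are then identified through $h$ with the $R_{ij}$ and $\Delta^0_i$; conditions \ref{comm-rel}, \ref{elementary}, \ref{long-gen}, \ref{short-gen} descend to the quotient since they only concern generation and commutators, while \ref{non-deg} for $G$ follows from \ref{non-deg} for $\stunit(\widetilde R, \widetilde\Delta)$ by the standard argument for central quotients using $N = \bigoplus_\alpha N_\alpha$. Reading the structure constants off the commutator relations of $G$ reproduces exactly the operations of $(R_{ij}, \Delta^0_i)$ (they are transported from $(S_{ij}, \Theta^0_i)$ along $h$), so the partial graded odd form ring associated with $G$ is the original one, which proves the first assertion.

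For the second assertion: if the partial graded odd form ring is centerless, then it already equals its centerless quotient, hence is a partial graded odd form $\mathbb Z$-algebra, and completes to a graded odd form $\mathbb Z$-algebra by lemmas~\ref{ring-struct}, \ref{form-struct}, \ref{peirce-ofr}; if it is centrally closed, then the central extension $h$ constructed above must be an isomorphism, so it inherits the $\mathbb Z$-algebra structure of $(\widetilde R, \widetilde\Delta)$ and again completes. I expect the main obstacle to be the surjectivity of $h$ — equivalently, that the universal central extension of the centerless quotient already captures the entire center of the original ring; this is precisely where the annihilation properties of the $I_{ij}$ and $\Gamma^0_i$ from the definition of the center are needed. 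The remaining steps — centrality of the $X_{ij}(c)$ and $X_i(c')$, the splitting $N = \bigoplus_\alpha N_\alpha$, and the descent of \ref{comm-rel}--\ref{short-gen} through a central quotient — are routine.
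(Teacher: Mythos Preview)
Your approach is correct and essentially the same as the paper's: both build the group as \(\stunit(\widetilde R,\widetilde\Delta)/N\), where \((\widetilde R,\widetilde\Delta)\) is the universal central extension and \(N\) is generated by root elements from the kernel of \(\widetilde R\to R\). The only difference is that you spell out the passage through the centerless quotient and the surjectivity of \(h\) explicitly, whereas the paper packages this into the phrase ``its universal central extension'' via the reduction remark preceding lemma~\ref{ucext-pres} and lemma~\ref{cext-comp}, and handles \ref{non-deg} for the quotient by citing \cite[lemma 11]{thesis} rather than arguing \(N=\bigoplus_\alpha N_\alpha\) directly.
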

\begin{proof}
    Let
    \(
        (R_{ij}, \Delta^0_i)_{ij}
    \) is a partial graded odd form ring satisfying \ref{long-idem}, \ref{short-idem}, and the associativity conditions. Its universal central extension
    \(
        (
            \widetilde R_{ij},
            \widetilde \Delta^0_i
        )_{ij}
    \) completes to an odd form ring
    \(
        (R, \Delta)
    \) by lemma \ref{ucext} and theorem \ref{coord-cryst}. Let
    \(
        I_{ij} = \Ker(\widetilde R_{ij} \to R_{ij})
    \) and
    \(
        \Gamma^0_i = \Ker(
            \widetilde \Delta^0_i \to \Delta^0_i
        )
    \), then
    \(
        N = \langle
            X_{ij}(I_{ij}),
            X_i(\Gamma^0_i)
        \rangle \leq \stunit(R, \Delta)
    \) is a central subgroup. It is easy to see using \cite[lemma 11]{thesis} that
    \[
        N \cap \bigl(
            \prod_{0 \neq |i| < j}
                X_{ij}(\widetilde R_{ij})
            \times \prod_{i > 0}
                X_i(\widetilde \Delta^0_i)
        \bigr)
        = \prod_{0 \neq |i| < j} X_{ij}(I_{ij})
        \times \prod_{i > 0} X_i(\Gamma^0_i).
    \]
    It follows that the homomorphisms
    \(
        X_{ij}
        \colon R_{ij}
        \to \stunit(R, \Delta) / N
    \) and
    \(
        X_i
        \colon \Delta^0_i
        \to \stunit(R, \Delta) / N
    \) are injective and their images satisfy \ref{comm-rel}--\ref{short-gen}. The second claim follows from lemmas \ref{centerless}, \ref{ucext}, and theorem \ref{coord-cryst}.
\end{proof}

\bibliographystyle{plain}
\bibliography{references}

\begin{thebibliography}{10}

\bibitem{bak}
A.~Bak.
\newblock {\em The stable structure of quadratic modules}.
\newblock Thesis Columbia Univ., 1969.

\bibitem{preusser-odd}
A.~Bak and R.~Preusser.
\newblock The {$\mathrm E$}-normal structure of odd dimensional unitary groups.
\newblock {\em J. Pure Appl. Algebra}, 222(9):2823--2880, 2018.

\bibitem{magic-square}
C.~H. Barton and A.~Sudbery.
\newblock Magic squares and matrix models of lie algebras.
\newblock {\em Advances in Mathematics}, 180(2):596--647, 2003.

\bibitem{calmes-fasel}
B.~Calm\`es and J.~Fasel.
\newblock Groupes classiques.
\newblock In {\em Autour des sch\'emas en groupes}, volume~II, pages 1--133.
  Soc. Math. France, 2015.

\bibitem{root-group-datum}
P.-E. Caprace and B.~R{\'e}my.
\newblock Groups with a root group datum.
\newblock {\em Innovations in Incidence Geometry}, 9:5--77, 2009.

\bibitem{knus}
M.-A. Knus.
\newblock {\em Quadratic and hermitian forms over rings}.
\newblock Springer-Verlag, 1991.

\bibitem{inv-book}
M.-A. Knus, A.~Merkurjev, M.~Rost, and J.-P. Tingol.
\newblock {\em The book of involutions}.
\newblock AMS Col. Publ., 1998.

\bibitem{st-jordan}
O.~Loos and E.~Neher.
\newblock {\em Steinberg groups for {J}ordan pairs}.
\newblock Birkh{\"a}user, New York, 2019.

\bibitem{odd-def-petrov}
V.~Petrov.
\newblock Odd unitary groups.
\newblock {\em J. Math. Sci.}, 130(3):4752--4766, 2005.

\bibitem{graded-groups}
Zh. Shi.
\newblock Groups graded by finite root systems.
\newblock {\em Tohoku Math. J.}, 45:89--108, 1993.

\bibitem{abs-root-subgr}
F.~G. Timmesfeld.
\newblock {\em Abstract root subgroups and simple groups of {L}ie-type}.
\newblock Birkh{\"a}user Basel, 2001.

\bibitem{def-tits}
J.~Tits.
\newblock Formes quadratiques, groupes orthogonaux et alg\'ebres de {C}lifford.
\newblock {\em Inventiones Math.}, 5:19--41, 1968.

\bibitem{central-ku2}
E.~Voronetsky.
\newblock Centrality of odd unitary $\mathrm{K}_2$-functor, 2020.
\newblock Preprint, arXiv:2005.02926.

\bibitem{thesis}
E.~Voronetsky.
\newblock {\em Lower $\mathrm{K}$-theory of odd unitary groups}.
\newblock PhD thesis, St Petersburg University, Saint Petersburg, Russia, 2022.

\bibitem{classic-ofa}
E.~Voronetsky.
\newblock Twisted forms of classical groups.
\newblock {\em Algebra i Analiz}, 34(2):56--94, 2022.

\bibitem{a-com-rel}
E.~Voronetsky.
\newblock Groups with $\mathsf{A}_\ell$-commutator relations.
\newblock {\em Algebra i Analiz}, 35(3):1--16, 2023.
\newblock In russian.

\bibitem{def-weil}
A.~Weil.
\newblock Algebras with involutions and the classical groups.
\newblock {\em J. Indian Math. Soc.}, 24(3):589--623, 1960.

\end{thebibliography}

\end{document}